\documentclass[12pt]{amsart}
\usepackage{amsfonts,amssymb,amsmath,amsthm}
\usepackage{xcolor}
\usepackage{fullpage}

\newcommand{\Rm}{{\mathbb R}}
\newcommand{\Pm}{{\mathbb P}}

\newcommand{\vol}{{\hbox{\scriptsize {\bf V}}}}
\newcommand{\rci}{{ w}}

\newcommand{\HA}{\mathcal H}
\newcommand{\co}{\colon}

\newcommand{\Events}{\Omega}
\newcommand{\event}{\omega}
\newcommand{\size}{n}
\def\R{\mathbb{R}}
\def\Z{\mathbb{Z}}

\newcommand{\ep}{\varepsilon}
\def\hat{\widehat}
 
\newcommand{\commentout}[1]{}
\newcommand{\E}{{\mathbb E}}

\def\N{\mathbb{N}}
\newcommand{\Range}{{\aleph}}
\newcommand{\RR}{\mathcal R}

\newcommand{\no}{\nonumber}
\newcommand{\abs}[1]{\lvert #1 \rvert}

\def\flux{\operatorname{flux}}
\def\pd{\partial}

\newcommand{\Var}{\operatorname{Var}}

\newcommand{\br}{\begin{eqnarray}}
\newcommand{\er}{\end{eqnarray}}
\newcommand{\be}{\begin{equation}}
\newcommand{\ee}{\end{equation}}
\newcommand{\baa}{\begin{array}}
\newcommand{\eaa}{\end{array}}
\newcommand{\ba}{\begin{eqnarray}}
\newcommand{\ea}{\end{eqnarray}}


\newtheorem{theorem}{\bf Theorem}[section]

\newtheorem{thm}[theorem]{Theorem}
\newtheorem{lem}[theorem]{Lemma}
\newtheorem{lemma}[theorem]{Lemma}
\newtheorem{prop}[theorem]{Proposition} 
\newtheorem{cor}[theorem]{Corollary}

\theoremstyle{definition}
\newtheorem{defin}[theorem]{Definition}
\newtheorem{rmk}[theorem]{Remark}

\theoremstyle{remark}
\newtheorem*{remark}{Remark}

\numberwithin{equation}{section}


\begin{document}

\title{Feeble fish in time-dependent waters \\ and homogenization of the G-equation} 

\author{Dmitri Burago}                                                          
\address{Dmitri Burago: The Pennsylvania State University,                          
Department of Mathematics, University Park, PA 16802, USA}                      
\email{burago@math.psu.edu}                                                     
                                                                                
\author{Sergei Ivanov}
\address{Sergei Ivanov:
St.\ Petersburg Department of Steklov Mathematical Institute,
Russian Academy of Sciences,
Fontanka 27, St.Petersburg 191023, Russia}
\email{svivanov@pdmi.ras.ru}

\author{Alexei Novikov}
\address{Alexei Novikov: The Pennsylvania State University,                          
Department of Mathematics, University Park, PA 16802, USA}                      
\email{anovikov@math.psu.edu}                                

\thanks{The first author was partially supported
by NSF grant DMS-1205597.
The second author was partially supported by
RFBR grant 17-01-00128.
The third author was partially supported
by NSF grants DMS-1515187 and DMS-1813943.
}

\keywords{G-equation, small controls, time-dependant incompressible flow, reachability}

\subjclass{34H05, 49L20}

\begin{abstract}
We study the following control problem.
A fish with bounded aquatic locomotion speed swims in fast waters.
Can this fish, under reasonable assumptions, 
get to a desired destination? It can, even if the flow is time-dependent.
Moreover, given a prescribed sufficiently large time $t$, it can be 
there at exactly the time $t$. The major difference from our previous work is the time-dependence of the flow.
We also give an application to homogenization of the G-equation. 
\end{abstract}

\maketitle

\section{Introduction}

Let $V=V_t$ be a time-dependent vector field in $\R^\size$,  $\size \geq 2$.
We assume that $V_t(x)$ is continuous, uniformly bounded, and locally Lipschitz in~$x$.
We often abuse the language and refer to $V_t$ as a {\em flow}.

\begin{defin}\label{d:reach}
An absolutely continuous path $\gamma\co [t_0,t_1]\to\R^n$
is said to be {\em admissible} if
$$
  \left|\frac{d}{dt} \gamma(t) - V_t(\gamma(t))\right| \le 1
$$
for a.e. $t\in[t_0,t_1]$.

Let $x_0,x_1 \in \R^{\size}$, $t_0, t_1 \in \R$, $t_0 \leq t_1$.  
We say that a point $(x_1,t_1)$ in space-time is {\it reachable from} $(x_0,t_0)$
if there exists an admissible path $\gamma\co[t_0,t_1]\to\R^n$ with
$\gamma(t_0)=x_0$ and $\gamma(t_1)=x_1$.
%

If $(x_1,t_1)$ is reachable from $(x_0,t_0)$, we also say that  
$x_1$ is {\it reachable from $(x_0,t_0)$ at time $t_1$}.
In the sequel we usually assume that the initial conditions are $x_0=0$ and $t_0=0$.
For brevity, we say that $x$ is {\it reachable at time $t$} if
$(x,t)$ is reachable from $(0,0)$.
\end{defin}

We suggest the following naive interpretation of our set-up.
The vector field $V_t$ is the velocity field of waters in an ocean.
Fish living in the ocean have bounded aquatic locomotive speed.
We normalize the data so that the maximal speed of the fish is~1,
and the speed of waters can be much larger.
Definition \ref{d:reach} formalizes the condition that a fish
starting its journey from $x_0$ at time~$t_0$ can control its motion so that
it finds itself at $x_1$ at exactly time~$t_1$.

\medskip

A similar problem  was considered in~\cite{BIN, KS} for  time-independent vector fields $V$ and 
a  weaker reachability result: the fish is not required to arrive at its destination exactly at a prescribed time. 

Handling time-dependence of $V_t$ required considerable effort and actually forced us to prove 
a stronger result. This reachability problem is directly related to 
the G-equation which in particular
models combustion processes in the presence of turbulence.
Therefore another substantial part of this paper is an application to homogenization of the G-equation. 
We address this application in Section~\ref{G_e}.

Our main result, see Theorem \ref{newmain} below,
states that under natural assumptions on $V_t$
every point is reachable at all sufficiently large times.
The assumptions on $V_t$ are the following:
\begin{enumerate}
\item[(i)]
The field $V_t(x)$ is bounded: 
$$
M:=1+\sup_{t,x}|V_t(x)|<\infty,
$$
 and is locally Lipschitz in $x$.
\item[(ii)]
The flow is incompressible: $\operatorname{div} V_t = 0$ for all t.
\item[(iii)]
Small mean drift: 
\begin{equation}\label{dos}
 \lim_{L\to\infty} \sup_{t\in\R, x\in\R^{\size}}
 \left\| \frac{1}{L^{\size}} \int_{[0,L]^{\size}} V_t(x+y) \, dy \right\| = 0 .
\end{equation}
\end{enumerate}

All assumptions (i)-(iii) are essential.
First, the flow might have a sink towards which the flow runs faster
than the maximum possible speed the fish can swim. This issue is easily resolved by the assumption (ii)
that the flow is incompressible. Next, the velocity of the flow might point in one direction
and again it may have speed greater than the maximal speed of the fish. This obstruction is
resolved by the condition (iii) of small mean drift on the large scale. Finally, the flow could be so strong that the fish is carried to infinity in finite time.
The condition (i)   rules out this possibility. The condition (i) is also a technical assumption which is needed
to be able to formulate the problem formally.

It was a surprise to us that, under these modest
assumptions the fish can reach every destination point $x\in\R^n$.
Furthermore, there is some $t_x$ such that if $t\geq t_x$,
the fish can get to $x$ at exactly time~$t$.
We also prove an asymptotically optimal bounds for
the reach time, namely $t_x$ grows no faster than~$|x|$
as $|x|\to\infty$.

Now we are in a position to formulate our main result. 

\begin{theorem}
\label{newmain}
For every flow $V_t$ satisfying 
(i)--(iii) above and every $a>1$,
there exists $C>0$ such that
for all $x_0, x\in\R^{\size}$ and $t_0\in\R$, $(x, t)$ is reachable
from $(x_0,t_0)$ for every $t\ge t_0+a{|x-x_0|}+C$. 
\end{theorem}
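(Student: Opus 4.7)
The plan is in two stages: first show the fish can reach $x$ from $(x_0,t_0)$ at \emph{some} time not much larger than $a|x-x_0|$; then show the arrival time can be fine-tuned to hit any sufficiently large prescribed $t$.

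\emph{Stage 1 (near-optimal travel).} Fix $a'\in(1,a)$. The main building block will be a \emph{unit-step lemma}: there exist constants $R,T>0$ with $T\le a'R$ and an increment $\Delta>0$ such that for every $(y,s)\in\R^n\times\R$, every unit vector $v$, and every $\tau\in[T,T+\Delta]$, the point $y+Rv$ is reachable from $(y,s)$ at time $s+\tau$. All three hypotheses enter here: by (iii) one chooses $R$ so large that the spatial average of $V_t$ on any $R$-cube is, uniformly in $t$, smaller than a prescribed threshold (of order $(a'-1)/a'$); incompressibility (ii) via Liouville's theorem then lets one realize this average rigorously by spreading the fish's virtual mass evenly over an $R$-cube and flowing it forward without loss of density; the bound (i) controls boundary corrections and guarantees uniqueness of trajectories. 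A quantitative tiling/pigeonhole step converts the small $L^1$-averaged drift into an actual admissible trajectory of displacement $Rv$ in time at most $a'R$, with an interval of allowable arrival times coming from the flexibility of the spreading construction. Concatenating $\lceil|x-x_0|/R\rceil$ such unit steps along a straight segment from $x_0$ to $x$ produces an admissible path reaching $x$ at some time $\tau_1$ with $|\tau_1-t_0-a'|x-x_0||=O(1)$.

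\emph{Stage 2 (time filling).} Applying the unit-step lemma with $+v$ followed by $-v$ gives a \emph{loop lemma}: from $(y,s)$ one can return to $y$ at time $s+\sigma$ for any $\sigma$ in a fixed interval $[\sigma_*,\sigma_*+2\Delta]$. Iterating $k$ such loops realises every duration in $[k\sigma_*,k(\sigma_*+2\Delta)]$; once $k\ge\sigma_*/(2\Delta)$, consecutive intervals overlap, so the set of achievable loop durations contains a half-line $[K_0,\infty)$. Prepending an appropriately timed loop to the Stage-1 path realises arrival at every $t\ge t_0+a'|x-x_0|+K_0+O(1)$. Since $a'<a$, the slack $(a-a')|x-x_0|$ absorbs the additive constants whenever $|x-x_0|$ exceeds a threshold; the bounded remaining range of $|x-x_0|$ is handled by enlarging $C$ using the Lipschitz bound (i).

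\emph{Main obstacle.} The crux is the unit-step lemma. In the autonomous case, one could exploit Poincar\'e recurrence of the time-independent flow; with $V_t$ genuinely time-dependent this is unavailable. Hypothesis (iii) is a purely spatial average at each fixed time, so passing from an averaged velocity estimate to an \emph{individual} admissible trajectory hitting a prescribed endpoint at a prescribed time demands a quantitative, uniform-in-$t$ homogenization-type construction, and guaranteeing an entire \emph{interval} of valid arrival times (rather than a single time) is precisely the technical strengthening needed for time-exact reachability. I expect this is where the bulk of the effort in the paper lies.
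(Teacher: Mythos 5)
Your Stage~1 and Stage~2 framework correctly identifies what needs to be proved, but the entire weight rests on the ``unit-step lemma,'' which you assert without proof and which, as stated, is not something that follows in any direct way from (i)--(iii). The phrases ``spreading the fish's virtual mass evenly over an $R$-cube and flowing it forward without loss of density'' and ``a quantitative tiling/pigeonhole step converts the small $L^1$-averaged drift into an actual admissible trajectory'' gesture at an idea but do not yield a construction: incompressibility preserves the \emph{volume} of a set of trajectories, not the position of any one trajectory, and there is no obvious mechanism by which a small space-averaged drift at each fixed time forces the existence of a single controlled path covering a prescribed displacement $Rv$ at a prescribed time $s+\tau$ for \emph{all} $v$ and an entire interval of $\tau$, uniformly in $(y,s)$. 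This is precisely the difficulty that makes a naive trajectory-concatenation argument fail, and it is the reason the paper does not attempt one.

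The paper's actual route is orthogonal to yours. It never builds trajectories step by step. Instead it studies the volume $\rci(r,t)=|\RR_t\cap I_r|$ of the reachable set inside a cube and proves a distributional differential inequality $\frac{d}{dt}\rci(r,t)\ge s(r,t)-\flux(V_t,D_r(t))$ (Lemma~\ref{lemma_fact1}), obtained by approximating $\chi_{\RR_t}$ by viscosity solutions $u^\ep$ of the G-equation and using lower semicontinuity of BV variation. The small-mean-drift hypothesis enters only to bound the flux leaking through $\partial I_r$ (Lemmas~\ref{Aepsilon}, \ref{flux-main}); isoperimetric inequalities (both Euclidean and relative-in-the-cube) bound $s(r,t)$ from below by a power of the volume; a discrete ODE-comparison argument (Lemma~\ref{lemma6}, Proposition~\ref{sequence}) then forces $\rci(r,t)$ to grow until it nearly fills $I_r$. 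Exact-time reachability is not obtained from an ``interval of arrival times'' as in your Stage~2, but from a time-reversal plus volume-overlap argument (Lemma~\ref{finish}): once the forward reachable set and the backward reachable set from the target together exceed $|I_r|$, they must intersect. Finally the sharp constant $a>1$ comes from a rescaling/drift trick (proof of Theorem~\ref{newmain}) applied to the non-optimal Proposition~\ref{newmain-p}. So while your high-level two-stage outline is reasonable, your proposal leaves the central technical fact unproved, and the paper resolves it by a geometric-measure-theoretic volume argument rather than by any trajectory-level unit-step or loop lemma.
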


\begin{remark}
The constant $C$ in Theorem \ref{newmain} depends on $a$ 
and parameters of the flow.
One can check that $C$ can be determined in terms of $a$,
the parameter $M$ from (i),
and the rate of convergence of the mean drift to zero in (iii).
\end{remark}

The small mean drift assumption~(iii) may be relaxed
at the expense of a weaker estimate on the reach time.
Namely we have the following.

\begin{cor}\label{new_cor}
Let $V_t$ be a flow satisfying (i), (ii), and
\be\label{dos_sod}
 \Delta:=\inf_{L>0} \sup_{t\in\R, x\in\R^{\size}}
 \left\| \frac{1}{L^{\size}} \int_{[0,L]^{\size}} V_t(x+y) \, dy \right\| <1 .
\ee
Then for every $a>\frac1{1-\Delta}$ there exists $C>0$ such that
for all $x_0, x\in\R^{\size}$ and $t_0\in\R$, $(x, t)$ is reachable
from $(x_0,t_0)$ for every $t\ge t_0+a{|x-x_0|}+C$. 
\end{cor}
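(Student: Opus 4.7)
The plan is to reduce Corollary~\ref{new_cor} to Theorem~\ref{newmain} by peeling off the bounded but nonvanishing part of the drift. Given $a>1/(1-\Delta)$, I would first fix $\epsilon>0$ small enough that $v:=1-\Delta-\epsilon$ still satisfies $av>1$, then choose $L^*>0$ with $\sup_{t,x}\bigl\|(L^*)^{-\size}\int_{[0,L^*]^{\size}} V_t(x+y)\,dy\bigr\|\le \Delta+\epsilon$, which exists by the definition of $\Delta$. Setting $\bar V_t(x):=(L^*)^{-\size}\int_{[0,L^*]^{\size}} V_t(x+y)\,dy$ and $W_t:=V_t-\bar V_t$ yields a flow that is bounded, locally Lipschitz in $x$, and incompressible (averaging commutes with $\operatorname{div}$), with $|\bar V_t|\le \Delta+\epsilon$ pointwise.

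The main substantive step is to verify that $W_t$ satisfies the small-drift condition~(iii), even though $V_t$ itself need not. Writing $A_L$ for the scale-$L$ averaging operator appearing in~(iii), we have $\bar V_t=A_{L^*}V_t$, and since $A_L$ and $A_{L^*}$ commute, the scale-$L$ mean drift of $W_t$ equals $A_L V_t - A_{L^*}(A_L V_t)$, i.e.\ the deviation of $A_L V_t$ from its own scale-$L^*$ average. A Stokes-type computation, in which each partial derivative of $A_L V_t$ reduces to a boundary integral over a face of $[0,L]^{\size}$, gives $|\nabla_x A_L V_t|=O(M/L)$, so $A_L V_t$ varies by at most $O(ML^*/L)$ over any cube of side $L^*$. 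Consequently the scale-$L$ mean drift of $W_t$ is $O(ML^*/L)\to 0$ as $L\to\infty$. This is where I expect the only real content to sit, since the hypothesis controls $V_t$'s own mean drift only at one scale; the monotonicity $f(kL^*)\le f(L^*)$ is not enough by itself.

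Finally, reachability transfers cleanly from $W_t$ to $V_t$. Any path with $|\dot\gamma-W_t(\gamma)|\le v$ is automatically admissible for $V_t$ at speed~$1$, because $|\dot\gamma-V_t(\gamma)|\le v+|\bar V_t|\le 1$. The time rescaling $\tilde\gamma(s):=\gamma(s/v)$ converts speed-$v$ admissible paths for $W_t$ into unit-speed admissible paths for $\tilde W_s(x):=v^{-1}W_{s/v}(x)$, and $\tilde W$ inherits (i)--(iii) from $W$. Applying Theorem~\ref{newmain} to $\tilde W$ with $a':=av>1$ and undoing the rescaling yields reachability for $V_t$ whenever $t\ge t_0+a|x-x_0|+C$, with $C=C'/v$. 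The choice of $\epsilon$ then delivers every $a>1/(1-\Delta)$, completing the reduction.
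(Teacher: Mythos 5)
Your proof is correct and follows essentially the same route as the paper: decompose $V_t=\bar V_t+W_t$ with $\bar V_t$ the average at a fixed scale $L^*$ (chosen so $|\bar V_t|<1$), check that $W_t$ satisfies hypotheses (i)--(iii), reserve enough control to cancel $\bar V_t$, and apply Theorem~\ref{newmain} to (a rescaled) $W_t$ at the reduced speed. The one place where your details diverge is the verification that $W_t$ has vanishing mean drift. The paper works on the kernel side: writing averaging as convolution with $\Phi_L$, it observes that $A_L W_t = V_t * (\Phi_L - \Phi_{L^*}*\Phi_L)$ and bounds $\|\Phi_L - \Phi_{L^*}*\Phi_L\|_{L^1}\le 2^n L^*/L$ by a support-and-size estimate. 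You instead use that $A_L$ and $A_{L^*}$ commute to write $A_L W_t = A_L V_t - A_{L^*}(A_L V_t)$, then bound the oscillation of $A_L V_t$ over an $L^*$-cube via the Stokes-type gradient bound $|\nabla A_L V_t|=O(M/L)$. Both arguments are elementary, give the same $O(ML^*/L)$ decay, and carry the same content; yours has the small advantage of staying entirely in terms of the averaging operators while the paper's is slightly more compact once the convolution identity is set up. Your explicit time-rescaling to $\tilde W_s:=v^{-1}W_{s/v}$ with $a'=av>1$ is the precise form of the ``speed renormalization'' that the paper invokes in one line, and your bookkeeping of $\epsilon$ and $v$ correctly recovers every $a>1/(1-\Delta)$.
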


The gist of the proof of Theorem~\ref{newmain} is: 
Fix a flow $V_t$ and assume without loss
of generality that $x_0=0$ and $t_0=0$.
For $t,r>0$ let $\RR_t$ denote the set of points reachable at time $t$
and $I_r$ the cube $[-r,r]^{\size}$ in~$\R^{\size}$.  
Our goal is to show that, for every fixed $r$ and for all sufficiently large $t$
the set $\RR_t$ contains~$I_r$.
We do this analyzing the volume of the intersection $\RR_t\cap I_r$
as a function of $t$.

The paper is organized as follows. In Section~\ref{prelim} we introduce our notation and main tools.
In particular, there we discuss isoperimetric inequalities, co-area formula, slicing, 
and certain regularity results such as rectifiability of the boundary of the reachable set.
Several important facts about BV-functions can be found in Appendix~\ref{BV}.
In Section~\ref{stages} we prove Theorem~\ref{newmain} and Corollary~\ref{new_cor}.
Sections~\ref{leak} and~\ref{middle} provide auxiliary estimates needed in the proof
of Theorem~\ref{newmain}.
In Section~\ref{G_e} we give an application of  Theorem~\ref{newmain} to the theory of random homogenization of the G-equation.

\subsection*{Some further directions}
In a discussion with the first author, Leonid Polterovich suggested 
to consider a similar problem where the fish is not a point but rather 
a region (think of an amoeba or a jelly-fish, for instance).
Leonid suggested the following symplectic formulation.
Let us say we are in $\R^2$ and the flow is Hamiltonian. 
This, of course, means that the area of the fish does not change
but its shape may change.
The fish has a fixed amount of Hofer's energy it can spend  to change the flow. 
 In two dimensions Hofer's energy is
 \[
 E(u) = \int_{-\infty}^\infty [\sup_{x \in \R^2}(\psi(x,t)) - \inf_{x \in \R^2}(\psi(x,t))] \, dt,
 \]
 where $\psi(x, t)$ is the stream-function (Hamiltonian) of the flow $u(x,t)$. 
Now the problem in question is as follows: Initially the fish 
sits in some ball, and it wants to get to another (destination ball) of the same size.
Leonid has made the following observation, which first sounds very counter-intuitive.
If the flow is constant (possibly very fast, no small mean drift),
the fish can get from any ball to a ball of the same size located in 
the direction  opposite to the flow and very far.
Using the same amount of Hofer's energy,
the fish can swim against an arbitrarily fast flow arbitrarily far away! 

We do not include a formal proof here.
 Here is an intuitive description.
Assume that the fish
has $M$ worth of Hofer's energy, where $M$ depends on the radius of the initial ball.
It spends $M/3$ of energy to stretch itself into a needle fish, or perhaps like an eel.
By that time, the flow has carried the fish far away just in the opposite direction of where it wants to arrive.
But now the fish can swim quite fast upstream (like eels do). Then it spends another $M/3$ of energy to
go back, through the ball where it wants eventually to end its journey, to a carefully chosen 
place well behind the
destination ball. After that, the flow carries the fish to where it dreams to arrive to, and the fish spend the
remaining $M/3$ of energy to re-assemble itself back into a round disc shape at 
exactly the time when the flow brings 
it to its destination.  

Many open problems are left.
First of all, even in dimension 2, this argument works for a constant flow only.
Of course, it suggests that much more is possible, but in general the flow can have diverging streams,
turbulence which may wrinkle the shape of the fish, etc. Even worse in dimension four. 
There may be phenomena related to non-squeezing and such. 
We did not invest enough time into thinking about this. 

Furthermore, a rather challenging goal is to find a more physical formulation for a fish which is a  ``more 
material''  region of changing shape (and its volume its almost conserved). The first naive idea
that comes to one's mind is to impose restrictions on the potential energy of the membrane (to keep
the amoeba in one piece, at least) and on kinetic energy (for it is still ``feeble''). 
We have not made any progress in this direction so far.

\section{Notation and preliminaries}\label{prelim}

Let $I_r=[-r,r]^{\size}$ denote the cube with edge length $2r$ centered at 0, 
$B_r(x)$ the Euclidean ball of radius $r$ centered at $x\in\R^{\size}$,
and $\vol_{\size}=|B_1(0)|$ the volume of the unit ball in $\R^{\size}$.
Occasionally we use $r=\infty$, with the convention that $I_\infty=B_\infty(x)=\R^{\size}$.


For $x_0\in\R^\size$, $t_0\in\R$ and $t\geq 0$, we denote by $\RR_t(x_0, t_0)$ 
the set of points reachable from $(x_0, t_0)$  at time $t_0+t$, see Definition \ref{d:reach}.
For brevity, let $\RR_t=\RR_t(0,0)$.

The volume of   $\RR_t \cap I_r$ is denoted by $\rci(r,t)$:
\begin{equation}\label{volume}
\rci(r,t) = |\RR_t \cap I_r| = \int_{I_r} \chi_{\RR_{t}}(x)\, dx ,
\end{equation}
where $\chi_{\RR_{t}}$ is the characteristic function of the reachable set $\RR_{t}$. 
The volume $\rci(r,t)$ is the main quantity of interest.

Recall that the maximum control in Definition \ref{d:reach} is bounded by~1.
Hence  $|x-x_0|\le Mt$ if $x$ is reachable from $(x_0,t_0)$ at time~$t_0+t$,
where $M$ is defined in the condition (i) above.
Therefore
\be\label{rGrowth_short}
  \RR_t \subset B_{tM}(0)\subset I_{tM}
\ee
for all $t>0$. Hence $\RR_t \cap I_r=\RR_t$ if $r\ge tM$.

We now define $s(r,t)\ge 0$, the perimeter of $\RR_t$ inside the cube $I_r$.  As we discuss below, $s(r,t)$ is essentially the $(\size-1)$-dimensional Hausdorff measure 
of the set $\partial\RR_t\cap I_r$. 
The formal definition is based on the notion of total variation for BV functions, see Appendix~\ref{BV},
in particular Definition~\ref{perim}.
Namely
$$
s(r,t) := P(\RR_t,I_r^\circ)=\Var(\chi_{\RR_t}, I_r^\circ),
$$
where $I_r^\circ$ is the interior of $I_r$.
Here the last expression is the variation of the characteristic function
$\chi_{\RR_t}$ in $I_r^\circ$, see Definition \ref{d:variation}.


Denote 
$$
 D_r(t):={\RR_t} \cap\pd I_r .
$$
The following lemma estimates the rate of change of the volume of $\RR_t$.
It is the main technical tool in our proof.

\begin{lem}\label{lemma_fact1}
For any fixed $r>0$,
\begin{equation}\label{vGrowth}
\frac{d}{dt}\rci(r,t) \geq s(r,t) - \flux(V_t,D_r(t))
\end{equation}
in the sense of distributions (with respect to $t$),
where $\flux(V_t,D_r(t))$ is the flux of the vector field $V_t$
through the $(\size-1)$-dimensional ``surface'' $D_r(t)\subset\pd I_r$.
Formally $\flux(V_t,D_r(t))$ is defined by
$$
  \flux(V_t,D_r(t)) = \int_{D_r(t)} V_t(x)\cdot \nu(x) \,dx
$$
where $\nu(x)$ is the outer normal to the boundary of the cube $I_r$ at a point $x\in\pd I_r$.

In the case $r=\infty$ we also have \eqref{vGrowth}, in the form
\be\label{vGrowth-infty}
 \frac{d}{dt}\rci(\infty,t) \geq s(\infty,t) .
\ee
\end{lem}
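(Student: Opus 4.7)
Assume momentarily that $\partial\RR_t$ is smooth and transverse to $\partial I_r$. If $x\in\partial\RR_t$ has outward unit normal $\nu(x)$, then a fish reaching $x$ at time $t$ via some admissible path can continue that path with control $u=\nu(x)$; this keeps the path admissible and shows that the evolving front $\partial\RR_t$ moves outward with normal velocity at least $V_t(x)\cdot\nu(x)+1$. Since the cube $I_r$ is fixed, differentiating $\rci(r,t)=|\RR_t\cap I_r|$ gives
\[
\tfrac{d}{dt}\rci(r,t)\;\ge\;\int_{\partial\RR_t\cap I_r^\circ}\bigl(V_t\cdot\nu+1\bigr)\,d\mathcal{H}^{\size-1}\;=\;s(r,t)+\int_{\partial\RR_t\cap I_r^\circ}V_t\cdot\nu\,d\mathcal{H}^{\size-1}.
\]
By incompressibility (ii) and the divergence theorem applied to $V_t$ on the bounded open region $\RR_t\cap I_r^\circ$,
\[
0\;=\;\int_{\partial\RR_t\cap I_r^\circ}V_t\cdot\nu_{\RR_t}\,d\mathcal{H}^{\size-1}\;+\;\int_{\RR_t\cap\pd I_r}V_t\cdot\nu_{I_r}\,d\mathcal{H}^{\size-1},
\]
and the second term is exactly $\flux(V_t,D_r(t))$. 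Substituting yields \eqref{vGrowth}. The case $r=\infty$ is the same but with the flux term absent: $\RR_t$ is bounded by \eqref{rGrowth_short}, so the divergence-theorem step gives $\int_{\partial\RR_t}V_t\cdot\nu=0$ directly, yielding \eqref{vGrowth-infty}.

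\textbf{Reduction to BV.} The actual $\RR_t$ is only guaranteed to have rectifiable boundary (Section \ref{prelim}), $\rci(r,\cdot)$ need not be classically differentiable, and transversality of $\partial\RR_t$ with $\partial I_r$ can fail; this is exactly why the inequality is stated distributionally. Testing against a nonnegative $\varphi\in C_c^\infty(\R)$, the goal becomes
\[
-\int\varphi'(t)\,\rci(r,t)\,dt\;\ge\;\int\varphi(t)\bigl(s(r,t)-\flux(V_t,D_r(t))\bigr)\,dt,
\]
where $s(r,t)=P(\RR_t,I_r^\circ)$ is the BV perimeter (Definition \ref{perim}) and $\flux(V_t,D_r(t))$ is defined by slicing $\chi_{\RR_t}$ along $\partial I_r$ as set up in the preliminaries and Appendix \ref{BV}. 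The cleanest conceptual packaging of the smooth calculation is the divergence theorem in space-time applied to the $(\size+1)$-dimensional vector field $(V_t(x),1)$, which is divergence-free in $(x,t)$, on the space-time domain $E=\{(x,t):x\in\RR_t\cap I_r\}$; the three pieces of $\partial E$ contribute $\rci(r,\cdot)$ (the level slices at the endpoints of the support of $\varphi$, testing against $\varphi'$), $s(r,t)+\int V_t\cdot\nu_{\RR_t}$ (the lateral piece from $\partial\RR_t$), and $\flux$ (the lateral piece from $\partial I_r$).

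\textbf{One-sided inequality and main obstacle.} To turn this space-time identity into an inequality in the right direction, I would use the containment $\RR_{t+\ep}\supseteq\bigl(\phi^V_{t,t+\ep}(\RR_t)\bigr)_\ep$ (where $\phi^V$ is the flow of $V$ and $(\cdot)_\ep$ denotes the $\ep$-neighborhood), which is immediate from the definition of an admissible path and incompressibility. A difference-quotient estimate based on this containment, combined with lower semicontinuity of the BV perimeter under such enlargements (giving the right sign for $s(r,t)$) and dominated convergence for the uniformly bounded field $V_t$ (controlling the flux term), produces the distributional inequality. The main obstacle I anticipate is precisely the simultaneous handling of (a) the one-sided reachability enlargement, (b) the BV regularity of $\RR_t$ in both $x$ and $t$, and (c) the non-transverse intersections of $\partial\RR_t$ with $\partial I_r$, which must be handled by choosing good sections via the coarea/slicing results in Section~\ref{prelim} and Appendix~\ref{BV}.
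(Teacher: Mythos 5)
There is a genuine gap, and you have flagged it yourself: your ``one-sided inequality'' step is an outline, not a proof, and the paper explicitly warns against exactly the route you propose. In the Remark immediately following the lemma, the authors explain that the difference-quotient argument (flow $\RR_t$ by $V$, then fatten by $\ep$, then pass to the limit) \emph{can} be made rigorous, but only after one establishes rectifiability and measure-theoretic regularity of the topological boundary $\partial\RR_t$ --- and that ``this approach would be quite technical for a time-dependent flow.'' The concrete difficulty in your step is the inequality $|A_\ep\cap I_r|-|A\cap I_r|\ge \ep\,P(A,I_r^\circ)+o(\ep)$: for a general set of finite perimeter, the lower Minkowski content can fail to control the perimeter without knowing that $\RR_t$ has negligible topological boundary and a well-behaved reduced boundary; this is precisely the regularity you would need to prove, and you do not. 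You also need to justify that these estimates hold uniformly enough in $t$ to integrate against a test function, which is a second layer of the same problem.

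The paper's actual proof avoids all of this by a different mechanism. It replaces $\chi_{\RR_t}$ by the Lipschitz approximants $u^\ep(x,t)=\sup\{e^{-|y|/\ep}:x\in\RR_t(y,0)\}$, which by standard optimal-control/viscosity-solution theory are locally Lipschitz and satisfy the G-equation $\partial_t u^\ep+V_t\cdot\nabla u^\ep=|\nabla u^\ep|$ almost everywhere. Integrating this PDE over $I_r$ (using $\operatorname{div}V_t=0$ to turn $\int V_t\cdot\nabla u^\ep$ into a boundary flux) gives an exact identity for each $\ep>0$; one then lets $\ep\downarrow 0$, using $u^\ep\downarrow\chi_{\RR_t}$, Fatou's lemma, and lower semicontinuity of the total variation to obtain the one-sided inequality $\int_{t_1}^{t_2}s(r,t)\,dt\le \rci(r,t_2)-\rci(r,t_1)+\int_{t_1}^{t_2}\flux(V_t,D_r(t))\,dt$. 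No regularity of $\partial\RR_t$ beyond finiteness of perimeter is ever invoked. Your smooth heuristic and the space-time divergence-theorem picture are both correct and in fact mirror the paper's informal remark, but to have a proof you would need to carry out the difference-quotient program the paper rejects --- or adopt the $u^\ep$/G-equation device the paper actually uses.
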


\begin{remark}
The inequalities \eqref{vGrowth} and \eqref{vGrowth-infty}
are easy to verify in the case when $V_t$ is smooth and
the boundary of $\RR_t$ is a smooth hypersurface transverse
to $\pd I_r$. In fact, in this case the inequalities turn into equalities.
Indeed, for a small $\delta>0$ the change from $\RR_t$
to $\RR_{t+\delta}$ is approximately the composition
of two operations:
First move the reachable set time $\delta$ along the flow
and then replace the resulting set by its $\delta$-neighborhood.
The first operation does not change the volume of the set since
the flow is incompressible. However, the volume of
the intersection with $I_r$ changes, it is reduced by the amount
of the flow that leaks out through the boundary of $I_r$.
This amount is approximately $\delta\cdot\flux(V_t,D_r(t))$.
On the second step, taking the $\delta$-neighborhood increases the volume by approximately
$\delta\cdot s(r,t)$,
since $s(r,t)$ is the area of the relevant part of the boundary of~$\RR_t$.
Passing to the limit as $\delta\to 0$ one obtains equalities in \eqref{vGrowth} and~\eqref{vGrowth-infty}.

This type of argument can be carried over to the general case if one shows
that $\RR_t$ has a rectifiable topological boundary
(compare with \cite[\S2]{BIN}). This approach would be quite
technical for a time-dependent flow. 
To avoid these technicalities, we use another formalization of 
the notion of surface area
and prove Lemma \ref{lemma_fact1} with appropriate machinery.
\end{remark}

\begin{proof}[Proof of Lemma \ref{lemma_fact1}]
The relation \eqref{vGrowth-infty} follows from \eqref{vGrowth}
and \eqref{rGrowth_short}.
To prove \eqref{vGrowth},
consider a family of functions
$u^\ep\co\R^n\times\R^+\to\R$, $\ep>0$, defined by
\be\label{controlrep2}
 u^\ep(x,t) = \sup \{ e^{-|y|/\ep} \mid \text{$y\in\R^n$ is such that $x\in\RR_t(y,0)$}\}.
\ee
Equivalently, one can set $u^\ep_0(x)=e^{-|x|/\ep}$ for all $x\in\R^n$ and define
\begin{equation}
\label{controlrep}
u^\ep(x,t) = \sup 
 \{ u^\ep_0(\gamma(0)) \mid \text{$\gamma\co[0,t]\to\R^n$ is an admissible path with $\gamma(t)=x$} \} ,
\end{equation}
see Definition \ref{d:reach}.
We need two properties of $u^\ep$:
For every fixed $\ep>0$, the function $u^\ep$ is locally Lipschitz
and it satisfies the following 
partial differential equation:
\be\label{g_eqn1}
\partial_t u^\ep + V_t \cdot \nabla u^\ep = |\nabla u^\ep|
\ee
for a.e.\ $x\in\R^n$ and $t>0$,
where $\nabla u^\ep$ denotes the gradient of $u^\ep$ with respect to the first argument.
The equation \eqref{g_eqn1} is called the {\em G-equation} associated to $V_t$.

The above properties are not hard to verify directly.
Alternatively, one can prove them using the theory of viscosity solutions, as follows.
The equation \eqref{g_eqn1} is a Hamilton-Jacobi equation with the Hamiltonian
\[
H(t,x,p) = -|p| + V_t \cdot p
\]
and the corresponding Lagrangian
\[
L(t,x,q)= \inf_{p \in \R^\size} \left[  p \cdot q - H(t,x,p) \right] = 
\begin{cases}
0, \hbox{ if } |q-V_t|\leq 1,\\
-\infty,  \hbox{ otherwise.}
\end{cases}
\]
By e.g.~\cite[Theorem 7.2]{Frank},
the function $u^\ep$ defined by \eqref{controlrep} is a viscosity solution of~\eqref{g_eqn1}
with the initial data $u^\ep(x,0)=u_0^\ep$.
For a definition, motivations, and derivation of viscosity solutions for optimal control problems see e.g.~\cite{Bardi}.
Since $u^\ep_0$ is bounded and uniformly continuous and $V_t$ is locally Lipschitz and bounded, 
the viscosity solution $u^\ep (x,t)$ is locally Lipschitz (by e.g. Lemma 9.2 in~\cite{Bressan}).
Furthermore, a viscosity solution satisfies the equation whenever it is differentiable
(see e.g.\ Proposition 1.9 on p.31 in~\cite{Bardi}).
Hence by Rademacher's Theorem $u^\ep$ satisfies~\eqref{g_eqn1} almost everywhere.


The formula \eqref{controlrep2} implies that
$u^{\ep}(x,t) \downarrow \chi_{\RR_{t}}(x)$ 
as $\ep\downarrow 0$,
where $\chi_{\RR_{t}}$ is the characteristic function of $\RR_t$. Hence
\[
\int_{I_r} u^{\ep}(x,t)\, dx  \to \rci(r,t)
\]
and
\[
 \flux(V_t u^\ep, \pd  I_r) \to  \flux(V_t,D_r(t))
\] 
as  $\ep \to 0$.
Integrating the G-equation over $I_r$ 
and taking into account the incompressibility of $V_t$
we obtain that
\[
\partial_t \int_{I_r} u^{\ep}\, dx + \flux(V_t u^\ep, \pd I_r) 
= \int_{I_r} |\nabla u^\ep|\, dx.
\]
Hence for any $t_1$ and $t_2$ we have
\[
\int_{t_1}^{t_2} \Var(u^\ep, I_r^\circ) \, dt
= \int_{t_1}^{t_2}  \int_{I_r} |\nabla u^\ep| \, dx dt 
= \int_{I_r} u^{\ep}(x, t_2) d x -   \int_{I_r} u^{\ep}(x, t_1) d x 
+ \int_{t_1}^{t_2} \flux(V_t u^\ep, \pd I_r) \,dt  .
\]
Note that this quantity is bounded by a constant independent of $\ep$
since $|u^\ep|\le 1$ and $|V_t|\le M$.
By Fatou's Lemma and the lower semi-continuity of the total variation (see e.g. Remark 3.5 in~\cite{A}) 
it follows that
\[
 \int_{t_1}^{t_2}  s(r,t) \,dt \equiv  \int_{t_1}^{t_2} \Var(\chi_{\RR_{t}}, I_r^\circ) \, dt 
 \leq 
 \liminf_{\ep\to 0} \int_{t_1}^{t_2}   \Var(u^\ep, I_r^\circ) \, dx dt .
\]
Thus
\[
\int_{t_1}^{t_2}  s(r,t) \,dt \le \ 
\rci(r, t_2) -\rci(r, t_1)  + \int_{t_1}^{t_2} \flux(V_t, D_r(t)) \,dt.
\]
This inequality means that \eqref{vGrowth} holds in the sense of distributions. 
\end{proof}

\begin{remark}
Since $\flux(V_t,D_r(t))$ is bounded for every fixed $r$ and $s(r,t)\ge 0$,
Lemma \ref{lemma_fact1} implies that $\rci(r,\cdot)$ is the sum of a Lipschitz function
and a non-decreasing function. Therefore  for almost all $t>0$ the derivative 
$\frac d{dt}\rci(r,t)$ exists and satisfies \eqref{vGrowth}.
\end{remark}

By \eqref{vGrowth-infty} the perimeter $P(\RR_t)=s(\infty,t)$ is finite for almost all $t>0$.
This and the De Giorgi Theorem~\ref{dg} imply that
the perimeter of ${\RR_t}$ equals the $(\size-1)$-dimensional 
Hausdorff measure $\HA^{n-1}(\pd^*\RR_t)$ of a rectifiable set $\partial^* {\RR_t}$, 
the reduced boundary of ${\RR_t}$ (see Definition~\ref{reduced}).
We define $p(r,t)$ to be the $(\size-2)$-dimensional Hausdorff measure of
the slice of $\partial^* {\RR_t}$ by $\pd I_r$:
\begin{equation}
p(r,t) = \HA^{\size-2} (\pd^* {\RR_t} \cap\pd I_r ).
\end{equation}
Then Corollary~\ref{s-fla} gives us 
the {\it co-area inequality} for this slicing:
\be\label{co-area}
  s(r_2,t)-s(r_1,t) \ge \int_{r_1}^{r_2} p(x,t)\,dx .
\ee
The quantity $p(r,t)$ can be though of as the $(\size-2)$-dimensional
perimeter of the $(n-1)$-dimensional set $D_r(t)={\RR_t} \cap\pd I_r$.
This is formalized in the appendix (see Theorem \ref{Maggi})
and used in the proof of Lemma \ref{flux-main} below.

We will need the following isoperimetric inequalities. 

The Euclidean Isoperimetric Inequality (Theorem 14.1 in~\cite{M})
implies that the volume $\rci(\infty,t)=|\RR_t|$ of the entire reachable set
$\RR_t$ and its perimeter $s(\infty,t)$ satisfy
\be\label{isoperimeter}
 s(\infty,t) \ge \lambda_0\, \rci(\infty,t)^{\frac{\size-1}\size},
\ee
where $\lambda_0=\size \vol_{\size}^{1/{\size}}$ is the Euclidean isoperimetric constant
satisfying
$$
  |\pd B_r(0)| = \lambda_0\,|B_r(0)|^{\frac{\size-1}\size} \hbox{ for all } r>0.
$$

The Relative Isoperimetric Inequality in the cube (Theorem~\ref{iso} in Appendix)
implies that the volume $\rci(r,t)$ of $\RR_t\cap I_r$ and its relative perimeter
$s(r,t)$ inside $I_r$ satisfy
\be\label{isopercube}
  s(r,t) \ge \lambda_1 \big(\min\{ \rci(r,t), |I_r|-\rci(r,t) \}\big)^{\frac{\size-1}\size} ,
\ee
where $\lambda_1$ is a positive constant depending only on~$\size$.

%


%

\section{Proof of Theorem~\ref{newmain} and Corollary~\ref{new_cor}}\label{stages}

Most of this section we spend proving Theorem \ref{newmain}.
Its most technical stage
(namely the proof of Proposition \ref{sequence}) is put off.
It is contained in Sections \ref{leak} and~\ref{middle}.

Let us say a few words about how the proof of Theorem~\ref{newmain} goes. 
It is easy to show that the volume of $\RR_t$ grows to infinity.
It is a more delicate task to verify that the set $\RR_t$ cannot be carried away from the origin by the flow.
Our idea is to show that,
for every $r\geq 0$, the set $\RR_t \cap I_r$ fills $I_r$ for all sufficiently large $t$. 
Thus we look at how the volume $\rci(r, t)=|\RR_t\cap I_r|$ grows. 
We want it to reach $(2r)^{\size}$, the volume of $I_{r}$.
This is done by dividing the filling process into three stages. During the initial stage we fill
in at least $\alpha|I_r|$ of the volume of $I_r$,
where $\alpha$ is a small positive constant defined below. 
In the next step, which is the key one, we 
fill in at least  $(1-\alpha)|I_r|$ of the volume of~$I_r$.
Furthermore, this portion of volume 
remains filled forever after a certain time~$t$. 
Finally, we show that at a later time a smaller cube $I_{r/2}$ is completely filled.
Since the choice of $r$ is arbitrary, $r/2$ is as good as $r$.

Our choice of $\alpha$ depends on the maximal speed of the fluid flow and the dimension. We fix
\be
\label{def-alpha}
  \alpha = \frac{\vol_{\size}}{(4M)^{\size}}
\ee
for the rest of the proof. We assume that $r$ is sufficiently large,
more precisely $r\ge r_0$ where $r_0$ is a constant
depending on $V_t$. 
The precise value of $r_0$ is defined in the course of the proof.

\medskip

The initial stage of the filling process is simple.
It is analyzed  in the following lemma:

\begin{lem}
\label{start}
Let $r>0$ and $T_0=\frac r{2M}$. Then
$$
 \rci(r,T_0) \ge \alpha |I_r| .
$$
\end{lem}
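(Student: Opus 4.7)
The plan is to prove the stronger statement $|\RR_t|\ge \vol_{\size}\, t^{\size}$ for every $t\ge 0$, and then observe that at $t=T_0=r/(2M)$ the entire reachable set is already contained in $I_r$, so this volume bound becomes exactly the desired lower bound on $\rci(r,T_0)$. Concretely, $MT_0=r/2$ combined with \eqref{rGrowth_short} gives $\RR_{T_0}\subset B_{MT_0}(0)\subset I_{r/2}\subset I_r$, hence $\rci(r,T_0)=|\RR_{T_0}|$. A direct computation confirms the match:
$$
\alpha\,|I_r|=\frac{\vol_{\size}}{(4M)^{\size}}(2r)^{\size}=\vol_{\size}\Bigl(\frac{r}{2M}\Bigr)^{\!\size}=\vol_{\size}\, T_0^{\size},
$$
so the universal volume bound, evaluated at $t=T_0$, yields the lemma on the nose.

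\textbf{Execution.} Set $v(t)=|\RR_t|=\rci(\infty,t)$. Incompressibility, together with pushing $\RR_s$ forward along the zero-control trajectories of $V$, shows that $v$ is non-decreasing. I would then combine Lemma~\ref{lemma_fact1} in the form \eqref{vGrowth-infty} with the Euclidean isoperimetric inequality \eqref{isoperimeter} to obtain
$$
\frac{d}{dt}v(t)\ \ge\ s(\infty,t)\ \ge\ \lambda_0\, v(t)^{(\size-1)/\size}, \qquad \lambda_0=\size\,\vol_{\size}^{1/\size},
$$
in the sense of distributions. Formally dividing by $v^{(\size-1)/\size}$ yields $(v^{1/\size})'\ge \vol_{\size}^{1/\size}$; since $v(0)=0$, integrating gives $v(t)\ge \vol_{\size}\, t^{\size}$, and setting $t=T_0$ closes the argument.

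\textbf{Main obstacle.} The only nontrivial point is to make the last bootstrap rigorous, because $v$ is a priori only BV and the differential inequality holds only distributionally. This is routine but deserves a line of justification: either one applies the chain rule for monotone BV functions to $v^{1/\size}$, or, more transparently, one compares $v$ on $[\epsilon,T_0]$ with the smooth solution of the ODE $V'=\lambda_0 V^{(\size-1)/\size}$ with initial condition $V(\epsilon)=v(\epsilon)>0$, and then lets $\epsilon\to 0^+$, using $v(\epsilon)\to 0$ (which follows from $\RR_\epsilon\subset B_{M\epsilon}(0)$). No input beyond Lemma~\ref{lemma_fact1} and the isoperimetric inequality \eqref{isoperimeter} is needed.
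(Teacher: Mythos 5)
Your proof follows the paper's argument exactly: reduce to the case $r=\infty$ via $\RR_{T_0}\subset I_r$ (from \eqref{rGrowth_short}), combine \eqref{vGrowth-infty} with the isoperimetric inequality \eqref{isoperimeter}, integrate the resulting differential inequality to get $|\RR_t|\ge\vol_{\size}t^{\size}$, and check the algebra at $t=T_0$. The only difference is that you explicitly address the rigor of integrating a distributional inequality for a monotone BV function (comparison with the ODE on $[\ep,T_0]$, then $\ep\to 0^+$), a step the paper passes over silently with "Therefore"; this is a correct and worthwhile clarification.
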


\begin{proof}
By \eqref{rGrowth_short} we have $ \RR_{T_0} \subset I_r $,
hence $\rci(r,T_0)=| \RR_{T_0}|$.
Clearly $\RR_t$ has a nonempty interior and hence $|\RR_t|>0$ for every $t>0$.
By \eqref{vGrowth-infty} and the isoperimetric inequality \eqref{isoperimeter}
we have
$$
  \frac d{dt}|\RR_t| \ge s(\infty,t) \ge  \size \vol_{\size}^{1/{\size}} |\RR_t|^{\frac{\size-1}\size} .
$$
Therefore
\be\label{Growth_incompress2}
  |\RR_t| \ge \vol_{\size}\cdot  t^{\size} = |B_t(0)| .
\ee
Hence 
$
  \rci(r,T_0) \ge \vol_{\size} T_0^{\size} = \vol_{\size} (2M)^{-\size}\, r^{\size} = \alpha |I_r|
$.
\end{proof}

The middle stage of the filling process is the most technical.
This is the content of the next proposition.

\begin{prop}
\label{sequence}
There exist constants $A=A(n)\ge 1$
and $r_0>0$ such that
$\rci(r,t)> (1-\alpha) |I_r| $ for all $r\ge r_0$ and $t\ge A r$.
\end{prop}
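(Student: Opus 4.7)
My plan is to integrate the differential inequality of Lemma \ref{lemma_fact1} in time, using Lemma \ref{start} as the base case, the relative isoperimetric inequality \eqref{isopercube} as a lower bound on $s(r,t)$, and a small-mean-drift-based estimate as an upper bound on $\flux(V_t,D_r(t))$. Starting from $\rci(r,T_0)\ge\alpha|I_r|$ at $T_0=r/(2M)$, the goal is to drive $\rci(r,t)$ through the interval $[\alpha|I_r|,(1-\alpha)|I_r|]$ in time $O(r)$.

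For the isoperimetric step, as long as $\alpha|I_r|\le\rci(r,t)\le(1-\alpha)|I_r|$, \eqref{isopercube} gives $s(r,t)\ge\lambda_1(\alpha|I_r|)^{(n-1)/n}=c_1 r^{n-1}$ with $c_1=c_1(n,\alpha)>0$. If I also have the flux bound $\flux(V_t,D_r(t))\le\tfrac12 s(r,t)$ in a time-averaged sense, Lemma \ref{lemma_fact1} yields
$$\rci(r,T)-\rci(r,T_0)\ge\tfrac{c_1}{2}r^{n-1}(T-T_0),$$
and this exceeds $|I_r|=(2r)^n$ once $T-T_0\ge C_1 r$. Choosing $A=\tfrac{1}{2M}+C_1$ then completes the argument, up to a bookkeeping step to handle the possibility that $\rci(r,t)$ temporarily dips below $\alpha|I_r|$ before reaching $(1-\alpha)|I_r|$; in that case Lemma \ref{start} reapplied at a later time restores the lower bound and the argument restarts.

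The main obstacle, which is the content of Sections \ref{leak} and \ref{middle}, is the flux bound. The naive estimate $|\flux(V_t,D_r(t))|\le M\,|D_r(t)|=O(r^{n-1})$ is of the same order as the isoperimetric lower bound, so pointwise-in-$r$ estimates will not suffice; the small mean drift assumption (iii) must be invoked. My approach would be to average the flux over a range of radii $r'\in[r,r+L]$: by coarea,
$$\int_r^{r+L}\flux(V_t,\pd I_{r'})\,dr'=\int_{I_{r+L}\setminus I_r}V_t\cdot\nu\,dx,$$
where $\nu$ is the $\ell^\infty$-radial direction in the shell. Incompressibility makes the full-boundary flux through $\pd I_{r'}$ vanish for each $r'$, so the partial flux through $D_{r'}(t)$ equals minus the flux through $\pd I_{r'}\setminus\RR_t$; the shell integral of either piece can be controlled via (iii) at scale $L$, provided $L$ is large enough to make the mean drift small and $r\gg L$. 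The coarea inequality \eqref{co-area} then allows one to transfer this radius-averaged estimate to a pointwise bound, or alternatively one may work throughout with the averaged volume $\int_r^{r+L}\rci(r',t)\,dr'$ in place of $\rci(r,t)$ and translate the conclusion back to a single $r$. Making this estimate uniform over a time interval of length $O(r)$ is the chief technical difficulty.
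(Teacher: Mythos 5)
Your high-level plan (Lemma~\ref{start} as base case, isoperimetric lower bound on $s(r,t)$, a small-mean-drift flux bound, then integrate) matches the paper's strategy, and you correctly identify that the naive flux bound $O(Mr^{n-1})$ is of the same order as the isoperimetric gain and therefore useless. But your proposed fix for this — radius-averaging the flux over a shell $[r,r+L]$ and invoking incompressibility plus~(iii) — does not close the gap as stated. The shell integral
$\int_r^{r+L}\flux(V_t,D_{r'}(t))\,dr'=\int_{\RR_t\cap(I_{r+L}\setminus I_r)}V_t\cdot\nu\,dx$
is an integral of $V_t$ over the set $\RR_t\cap(\text{shell})$, which is \emph{not} a cube or union of full cubes, so assumption~(iii) does not apply directly; and incompressibility tells you only that the full-boundary flux vanishes, which just says the inward flux equals the outward flux and gives no bound on either. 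To approximate $\RR_t\cap(\text{shell})$ by cubes of side $L$ and apply~(iii), you incur an error proportional to the amount of ``transitional'' cubes, i.e.\ to the boundary measure of $\RR_t$ near $\partial I_r$. Controlling that error is precisely the content of Lemma~\ref{flux-main}, which the paper obtains via an $(n-1)$-dimensional relative isoperimetric inequality (Corollary~\ref{isopercube-lite}) applied to $D_r(t)$ inside small face cubes $F_i\subset\partial I_r$: this bounds $\min\{|F_i\cap D_r|,|F_i\setminus D_r|\}$ by $CL\,p_i$ with $p_i=\HA^{n-2}(F_i^\circ\cap\partial^*\RR_t)$, yielding $|\flux(V_t,D_r(t))|\le C_0\,p(r,t)+\ep r^{n-1}$. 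The coarea inequality~\eqref{co-area} then supplies a radius $\tilde r\in[r,r+h]$ where $\hat p(\tilde r,t,T)$ is small. Without the $(n-1)$-dimensional isoperimetric step, there is no way to pass from~(iii)'s cube averages to a bound on the flux through $D_r$, and the argument stalls exactly at the point you flag as ``the chief technical difficulty.''

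A second, smaller gap: your ``bookkeeping step'' for dips of $\rci(r,t)$ below $\alpha|I_r|$ — ``Lemma~\ref{start} reapplied at a later time restores the lower bound'' — is not a valid move. $\RR_t$ is the reachable set from the fixed origin $(0,0)$; it is \emph{not} monotone in $t$, and Lemma~\ref{start} is pinned to the initial time. You cannot simply restart the clock, because the fish may have been carried far from the origin and there is no a priori reason the reachable set at a later time contains a ball near~$0$. The paper instead bounds the rate of decrease directly: Proposition~\ref{lemma5} gives $\rci(r,t+T)-\rci(r,t)\ge \hat s(r,t,T)-\ep r^n\ge -\ep r^n$, which by the choice $\ep<\alpha/10$ becomes part~1 of Lemma~\ref{lemma6} and caps any dip at $\frac\alpha{10}|I_r|$ per step. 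That quantitative near-monotonicity, not a restart, is what keeps the discrete iteration marching upward.
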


We prove Proposition~\ref{sequence} in Section~\ref{middle}.
For this proof we need to estimate how much volume of $\RR_t\cap I_r$ can leak out through
the boundary of $I_r$. 
This estimate is contained in Section \ref{leak}, see Proposition~\ref{lemma5}.

The final stage of the filling process is simple again. 
It is analyzed  in Lemma \ref{finish}.
We show that, once $\rci(r,t)$
exceeds $(1-\alpha)|I_r|$, then in time $T_0$ the reachable
set covers the smaller cube $I_{r/2}$.

\begin{lem}
\label{finish}
Suppose that $r>0$ and $t_1>0$ are such that $\rci(r,t_1)>(1-\alpha)|I_r|$.
As in the previous lemma, let $T_0=\frac{r}{2M}$. Then
$$
I_{r/2}\subset\RR_{t_1+T_0}.
$$
\end{lem}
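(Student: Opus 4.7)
The plan is to fix an arbitrary target $y \in I_{r/2}$ and show that $y \in \RR_{t_1 + T_0}$ by a volume-pigeonhole argument. Let $S_y \subset \R^n$ denote the set of points $x$ from which $(y, t_1 + T_0)$ is reachable starting at time $t_1$. If one can exhibit a single $x \in S_y \cap \RR_{t_1}$, then concatenating an admissible path from $(0,0)$ to $(x, t_1)$ with one from $(x, t_1)$ to $(y, t_1 + T_0)$ yields an admissible path witnessing $y \in \RR_{t_1 + T_0}$, as desired.

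Two ingredients do the work. First, $S_y$ is localized inside $I_r$: since every admissible path has Euclidean speed at most $M$, any $x \in S_y$ satisfies $|x - y| \le M T_0 = r/2$, and together with $y \in I_{r/2}$ this gives $S_y \subset I_r$. Second, there is a lower bound $|S_y| \ge \vol_n T_0^n$. To establish this, reverse time via the substitution $s = t_1 + T_0 - t$: this converts admissible trajectories for $V$ on $[t_1, t_1 + T_0]$ ending at $y$ into admissible trajectories on $[0, T_0]$ starting at $y$ for the reversed flow $\tilde V_s(x) := -V_{t_1 + T_0 - s}(x)$. The field $\tilde V$ is again bounded, locally Lipschitz in $x$, and divergence-free, so the proof of Lemma \ref{start}, which used only incompressibility together with the Euclidean isoperimetric inequality \eqref{isoperimeter}, applies verbatim to $\tilde V$ and delivers $|S_y| \ge \vol_n T_0^n$.

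Now combine the two bounds. The definition $\alpha = \vol_n/(4M)^n$ together with $T_0 = r/(2M)$ yields the pleasant identity
\[
 \alpha |I_r| = \frac{\vol_n}{(4M)^n}(2r)^n = \frac{\vol_n\, r^n}{(2M)^n} = \vol_n T_0^n.
\]
The hypothesis $\rci(r, t_1) > (1 - \alpha)|I_r|$ rewrites as $|I_r \setminus \RR_{t_1}| < \alpha |I_r| \le |S_y|$. Since $S_y \subset I_r$ has strictly larger volume than the complement of $\RR_{t_1}$ inside $I_r$, the intersection $S_y \cap \RR_{t_1}$ must be nonempty, producing the required point $x$ and completing the proof.

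The only mildly subtle step is the lower bound on $|S_y|$: at first sight Lemma \ref{start} concerns a forward reachable set grown from a fixed initial point, whereas $S_y$ is a backward reachable set shrunk onto a fixed terminal point. The time-reversal identification above removes this apparent gap at no cost, since the hypotheses imposed on $V_t$ (boundedness, local Lipschitz continuity, incompressibility) are manifestly symmetric under $(t, V) \mapsto (-t, -V)$, so no new estimates need to be proved.
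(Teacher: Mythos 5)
Your proof is correct and follows essentially the same route as the paper: you reverse time, identify the backward-reachable set $S_y$ with the forward-reachable set of the reversed flow $V^-$, apply the incompressibility-based volume lower bound $|S_y|\ge \vol_n T_0^n=\alpha|I_r|$, localize $S_y\subset I_r$ via the speed bound, and conclude by a volume pigeonhole that $S_y\cap\RR_{t_1}\ne\emptyset$. This matches the paper's argument step for step.
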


\begin{proof}  
Fix $p\in I_{r/2}$ and let $t_2=t_1+T_0$.  Let 
$$
  \RR^-_t=\{x\in \R^{\size}: \hbox{$(p,t_2)$ is reachable from $(x,t_2-t)$}\}.
$$
$\RR^-_t$ is the  reachable set from $p$ for the reversed flow $V^-_{t}=-V_{t_2-t}.$
As in the previous lemma we can apply~\eqref{Growth_incompress2} to $V^-_t$ to obtain
$$
  |\RR^-_{T_0}|\geq \vol_{\size} T_0^{\size}  = \alpha |I_r|. 
$$
Hence
$$
  |\RR^-_{T_0}|+\rci(r,t_1)>|I_r|.
$$
By~\eqref{rGrowth_short}  applied to $V_t^-$ we have
$$
  \RR^-_{T_0}\subset B_{r/2}(p)\subset I_r .
$$
Thus $\RR^-_{T_0}\cap\RR_{t_1} \neq \emptyset$.  Hence $p\in\RR_{t_2}.$
\end{proof}

Combining the results of the three stages, we obtain the following
proposition, which is essentially Theorem \ref{newmain}
with non-optimal bounds on reach time.

\begin{prop}
\label{newmain-p}
There exist constants $\mu=\mu(\size)\in(0,1)$ and $C>0$ such that
for every  $t\ge C$ we have
$
  B_{\mu t}(0) \subset \RR_t
$.
\end{prop}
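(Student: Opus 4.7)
The plan is to deduce Proposition \ref{newmain-p} simply by composing Proposition \ref{sequence} with Lemma \ref{finish}. The crucial feature of Proposition \ref{sequence} that makes this possible is that the bound $\rci(r,t)>(1-\alpha)|I_r|$ holds for \emph{every} $t\ge Ar$, not just for the single time $t=Ar$. This persistence in time is what allows the time parameter in Lemma \ref{finish} to be chosen so that the final reach time lands exactly at the prescribed $t$.

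Concretely, set $\beta := A+\frac{1}{2M}$ and $\mu:=\frac{1}{2\beta}$; since $A\ge 1$ and $M\ge 1$, we have $\mu\in(0,\tfrac12)\subset(0,1)$. Given any $t$, I would take $r:=t/\beta$ and $t_1:=t-T_0$, where $T_0=r/(2M)$ is the time appearing in Lemma \ref{start} and Lemma \ref{finish}. By construction
\[
t_1 \;=\; t-\tfrac{r}{2M} \;=\; r\bigl(\beta-\tfrac{1}{2M}\bigr) \;=\; Ar,
\]
so $t_1\ge Ar$. Provided $r\ge r_0$, which is the same as $t\ge\beta r_0$, Proposition \ref{sequence} applied at time $t_1$ gives $\rci(r,t_1)>(1-\alpha)|I_r|$, and Lemma \ref{finish} then yields $I_{r/2}\subset\RR_{t_1+T_0}=\RR_t$. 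Since $B_{r/2}(0)\subset I_{r/2}$ and $r/2=\mu t$ by the choice of $r$, this produces $B_{\mu t}(0)\subset\RR_t$ for all $t\ge C:=\beta r_0$, which is exactly the conclusion.

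There is no genuine obstacle at this stage of the argument: all the real work has already been packed into Proposition \ref{sequence}. The only point worth mentioning is that one must read Proposition \ref{sequence} in its uniform-in-$t$ form, because otherwise one would only conclude $B_{\mu t}(0)\subset\RR_{t'}$ for some $t'$ of order $t$ rather than for $t$ itself; the above time-shift trick (running the middle stage up to $t-T_0$ and then the final stage for exactly $T_0$) is what converts the uniform bound on $\rci(r,\cdot)$ into a reachability statement at the prescribed time.
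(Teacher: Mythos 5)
Your argument is correct and follows the same route as the paper: compose Proposition \ref{sequence} (whose uniform-in-$t$ conclusion you rightly identify as the crucial feature) with Lemma \ref{finish} via the time-shift $t_1=t-T_0$. The one small wrinkle is that your $\mu=\frac{1}{2\beta}=\frac{1}{2A+\frac{1}{M}}$ depends on $M$, whereas the statement requires $\mu=\mu(n)$; this is harmless because $\mu\ge\frac{1}{2A+1}$ and $B_{\mu' t}(0)\subset B_{\mu t}(0)$ for any smaller $\mu'$, so you may simply replace $\mu$ by $(2A+1)^{-1}$ (which is the paper's choice, obtained there by applying the $I_{r/2}\subset\RR_t$ conclusion with $2r$ in place of $r$).
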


\begin{proof}
By Proposition \ref{sequence} we have $\rci(r,t) > (1-\alpha)|I_r|$
for all $r\ge r_0$ and $t\ge Ar$.
By Lemma \ref{finish} it follows that
$$
  B_{r/2}(0) \subset I_{r/2} \subset \RR_t
$$
for all $t\ge Ar+T_0=(A+\frac1{2M})r$.
Applying this to $2r$ in place of $r$ yields that
$B_r(0) \subset \RR_t$
for all $r\ge r_0$ and $t\ge (2A+1) r$.
Hence the statement 
holds for $\mu=(2A+1)^{-1}$ and $C=(2A+1)r_0$.
\end{proof}

Now we are in a position to prove Theorem~\ref{newmain} and Corollary~\ref{new_cor}.

\begin{proof}[Proof of Theorem~\ref{newmain}]
Fix $\ep>0$. 
Note that Proposition \ref{newmain-p} 
(after a suitable rescaling)
holds for controls bounded
by $\ep$ instead of~1.
Our plan is to spare a small part of control
to ensure reachability and use the remaining part of control
to add the drift with speed $1-\ep$ in a desired direction.

Without loss of generality assume
that $x_0=0$ and $t_0=0$.
Fix $v\in\R^n$ such that $|v|\le 1-\ep$ and
apply Proposition \ref{newmain-p} to the flow
$\widetilde V$ defined by
$$
 \widetilde V_t(x) = \frac1\ep\, V_t(\ep x  +  tv) .
$$
This yields a constant $C_{\ep,v}>0$ such that
for every $t\ge C_{\ep,v}$ the reachable set for $\widetilde V$
at time $t$ contains the ball $B_{\mu t}(0)$.
Here $\mu=\mu(n)$ is the constant from Proposition \ref{newmain-p}.
If $\widetilde\gamma:[0,t]\to\R^n$
is an admissible path for $\widetilde V$,
then the path $\gamma$ defined by
$$
 \gamma(\tau) = \ep\widetilde\gamma(\tau)+\tau v
$$
is admissible for our flow $V$.
Hence the reachable set $\RR_t$
contains the ball $B_{\ep\mu t}(tv)$.
In particular the point $y=tv$ can be reached
at time $t$, which satisfies $t\le |y|/(1-\ep)$.

It remains to show that the constant $C_{\ep,v}$
can be chosen independently of $v$. 
To show this, let us choose a finite $\ep\mu$-net
$\{v_1,\dots,v_m\}$ in the ball $B_{1-\ep}(0)$
and let $C_\ep=\max\{C_{\ep,v_i}: 1\le i\le m\}$.
Then for every $t\ge C_\ep$ we have
$$
  \RR_t \supset \bigcup_{i=1}^m B_{\ep\mu t}(tv_i) \supset B_{(1-\ep)t}(0) .
$$
Thus every point $x\in\R^n$ is reachable
at any moment $t\ge\max\{C_\ep,|x|/(1-\ep)\}$.
To finish the proof of the theorem,
set $\ep=1-\frac1a$ and $C=C_\ep$.
\end{proof}

\begin{proof}[Proof of Corollary~\ref{new_cor}]
The idea is to use a part of the control
to compensate the mean drift at some scale.
Fix $c\in(\Delta,1)$.
By \eqref{dos_sod} there exists $L_0>0$ such that the flow 
\be\label{av_L0}
 \overline V_t(x) := \frac{1}{L_0^{\size}} \int_{[0,L_0]^{\size}} V_t(x+y) \, dy
\ee
satisfies $\|\overline V_t(x)\| < c$ for all $t$ and~$x$.
Let $V_t^0=V_t-\overline V_t$. Then
\be\label{av_av}
 \left\| \frac{1}{L^{\size}} \int_{[0,L]^{\size}} V^0_t(x+y) \, dy \right\| 
 \le \frac{2^nL_0M}{L}
\ee
for all $L\ge L_0$.
Indeed, let $\Phi_L$ denote the characteristic function of the cube $[-L,0]^n$ divided by~$L^n$.
Then $\overline V_t$ is the convolution $V_t*\Phi_{L_0}$
and the integral in \eqref{av_av} is the value at $x$ of the convolution
$V^0_t*\Phi_L=V_t*(\Phi_L-\Phi_{L_0}*\Phi_L)$.
The function $|\Phi_L-\Phi_{L_0}*\Phi_L|$ is bounded by $1/L^n$ and its support is contained
in the set $[-L-L_0,0]^n\setminus[-L,-L_0]^n$ of volume
$(L+L_0)^n-(L-L_0)^n \le 2^nL_0L^{n-1}$.
Hence the $L^1$-norm of $\Phi_L-\Phi_{L_0}*\Phi_L$ is bounded by $2^nL_0/L$
and \eqref{av_av} follows.

Observe that $\overline V_t$ is incompressible and bounded by~$M$.
This and \eqref{av_av} imply that $V^0_t$ satisfies the assumptions of Theorem~\ref{newmain}.
We apply Theorem~\ref{newmain} to $V^0_t$ with the maximal fish speed set to
$1-c$ instead of~1.
Since $\|\overline V_t\|<c$, every admissible path in this setting
is admissible for the original flow $V_t=V_t^0+\overline V_t$.
Because of the speed renormalization, the conclusion of the theorem holds for any $a>\frac1{1-c}$.
Since $c\in(\Delta,1)$ is arbitrary, Corollary~\ref{new_cor} follows.
\end{proof}

\begin{rmk}\label{newcor_uniform}
One can see from the proof that the constant $C$ in Corollary \ref{new_cor}
is determined by $M$, $\Delta$, $a$, and any value $L_0$ such that $\overline V_t(x)$
in \eqref{av_L0} is bounded by $\frac{\Delta+1}2$ for all $t$ and~$x$.
\end{rmk}

\section{Volume change estimate}\label{leak}

Throughout the paper we integrate areas and perimeters over time intervals. 
Such integrals are indicated by a hat. Namely we define
$$
\hat s(r,t,T)=\int_{t}^{t+T}s(r,\tau)\,d\tau 
$$
and
$$
\hat p(r,t,T)=\int_{t}^{t+T} p(r,\tau)\, d\tau .
$$

The goal of this section is to prove the following proposition.

\begin{prop}\label{lemma5}
For every $\ep>0$ there exists $r_0>0$ 
such that for all $r\ge r_0$, $t>0$, $T\in[0,r]$ we have
\be\label{lemma5-eq}
  \rci(r,t+T)-\rci(r,t) \ge \hat s(r,t,T) -\ep r^{\size} .
\ee
\end{prop}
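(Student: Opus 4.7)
The natural first move is to translate Proposition~\ref{lemma5} into a flux estimate. Integrating the distributional inequality \eqref{vGrowth} from Lemma~\ref{lemma_fact1} over $[t,t+T]$ gives
\be
\rci(r,t+T) - \rci(r,t) \ge \hat s(r,t,T) - \int_t^{t+T}\flux(V_\tau, D_r(\tau))\,d\tau,
\ee
so Proposition~\ref{lemma5} reduces to the flux bound
\be
\int_t^{t+T}\flux(V_\tau, D_r(\tau))\,d\tau \le \ep r^{\size}
\ee
for all $r \ge r_0(\ep)$, $t\in\R$, and $T\in[0,r]$. The crude pointwise bound $|V_\tau|\le M$ only yields $O(r^{\size})$ here (using $T\le r$ and $|\pd I_r|=2\size(2r)^{\size-1}$), so the factor $\ep$ must be bought entirely from hypothesis~(iii).

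The plan is to decompose $V_\tau$ into a mean part that is small pointwise and a residual part that is small in an integrated sense. Given $\ep$, pick $\delta=\delta(\ep)>0$ and, using~(iii), choose a scale $L=L(\delta)$ so that the cube-average
\be
\bar V_\tau(x) := \frac{1}{L^{\size}}\int_{[0,L]^{\size}} V_\tau(x+y)\,dy
\ee
satisfies $\|\bar V_\tau\|_\infty\le\delta$ uniformly in $\tau$, and write $V_\tau=\bar V_\tau+V^0_\tau$; both summands are incompressible. The contribution of $\bar V_\tau$ is trivial: $|\flux(\bar V_\tau, D_r(\tau))|\le\delta|\pd I_r|$ pointwise in $\tau$, so its time-integral over $[t,t+T]$ is at most $2\size\,\delta(2r)^{\size-1}r\le\ep r^{\size}/2$ once $\delta$ is chosen small enough in terms of $\ep$ and $\size$.

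The main obstacle is the residual flux $\int_t^{t+T}\flux(V^0_\tau, D_r(\tau))\,d\tau$. The field $V^0_\tau$ is only pointwise bounded by $2M$, so a naive estimate again gains nothing; the key structural fact is that by construction its integrals over $L$-cubes essentially vanish. I would exploit this by averaging the flux over thin translations of the boundary: for shifts $\sigma\in[0,L]$, Fubini converts the flux through a face of $I_{r+\sigma}$ into a volume integral of a single component of $V^0_\tau$ over a slab of thickness $L$, which one then tiles by $L$-cubes, each contributing at most $\delta$ in absolute value. The loss terms live in two places: the boundary layers of the slab (controllable by the codimension-$2$ slice perimeter $p(r,\tau)$ via the co-area inequality \eqref{co-area}) and the contribution of the indicator $\chi_{\RR_\tau}$ in $D_r(\tau)$ that gets dropped by a sign-estimate. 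Passing from the shift-averaged bound back to a pointwise-in-$r$ bound requires that the flux be slowly varying in $r$ on scale $L$, which forces $r_0(\ep)$ to be comparable to $L(\delta(\ep))$. The delicate book-keeping needed to force the sum of all these loss terms to remain bounded by $\ep r^{\size}/2$ is, I expect, the real technical heart of the proof.
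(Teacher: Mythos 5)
Your reduction to the time-integrated flux bound $\int_t^{t+T}\flux(V_\tau,D_r(\tau))\,d\tau\le\ep r^{\size}$ is correct, and you are right that the smallness of the mean drift must supply the factor $\ep$. But the Fubini/slab mechanism you propose for the residual flux does not close. After averaging over $\sigma\in[0,L]$ and applying Fubini, the volume integral runs over the slab intersected with $\RR_\tau$, so the integrand is $\chi_{\RR_\tau}(V^0_\tau)_i$. Tiling the slab by $\size$-dimensional $L$-cubes $Q$ and writing $\int_Q\chi_{\RR_\tau}(V^0_\tau)_i=\int_Q(V^0_\tau)_i-\int_{Q\setminus\RR_\tau}(V^0_\tau)_i$, the first term is small by construction of $V^0$, but the second forces you to bound $\sum_Q\min(|Q\cap\RR_\tau|,|Q\setminus\RR_\tau|)$. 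The relative isoperimetric inequality in $\size$-cubes only gives $\min\le CL\,P(\RR_\tau,Q^\circ)$; summing and integrating over $[t,t+T]$ then produces a term of order $M\,\hat s(r+L,t,T)=O(Mr^{\size})$ by Lemma~\ref{lemma3}, with a constant that is not small. I do not see a sign-estimate that deletes the indicator; it genuinely couples to the geometry of $\RR_\tau$ at scale $L$, and an $\size$-dimensional volume bound cannot absorb it.

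The paper works one dimension lower and sidesteps the issue entirely, without decomposing $V$. Lemma~\ref{flux-main} tiles $\pd I_r$ (not a slab) by $(\size-1)$-dimensional cubes $F_i$ of side $L\sim L_0(\ep)$, applies Lemma~\ref{Aepsilon} to get $|\flux(V_t,F_i)|\le\ep L^{\size-1}$, and uses the $(\size-1)$-dimensional relative isoperimetric inequality (Corollary~\ref{isopercube-lite}) to get $\min(|F_i\cap D|,|F_i\setminus D|)\le CL\,p_i$, where $p_i=\HA^{\size-2}(F_i^\circ\cap\pd^*\RR_t)$. Summing yields the pointwise-in-$t$ bound $|\flux(V_t,D_r(t))|\le C_0\,p(r,t)+\ep r^{\size-1}$: the loss term is now the \emph{codimension-two} quantity $p(r,t)$, and that is where the real gain is. The $r$-shift then happens at a separate scale $h=h(\ep,M,\size)$: co-area \eqref{co-area} and Lemma~\ref{lemma3} give $\int_r^{r+h}\hat p(x,t,T)\,dx\le\hat s(r+h,t,T)=O(r^{\size})$, so some $\tilde r\in[r,r+h]$ has $C_0\hat p(\tilde r,t,T)\le\tfrac12\ep r^{\size}$, and transferring from $I_{\tilde r}$ back to $I_r$ costs only $O(hr^{\size-1})=o(r^{\size})$. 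Your single-scale $\sigma\in[0,L]$ shift conflates the drift scale $L$ with the co-area scale $h$ and does not produce the codimension-two gain.
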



For the proof of Proposition~\ref{lemma5} we need the following two lemmas.

\begin{lem}\label{lemma3}
For all $r,t,T>0$,
\be\label{hatS-above}
  \hat s(r,t,T) \le C_1(r+T) r^{\size-1}
\ee
where $C_1=\size 2^{\size}M$.
\end{lem}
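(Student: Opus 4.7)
My plan is to derive \eqref{hatS-above} directly from the key differential inequality \eqref{vGrowth} of Lemma \ref{lemma_fact1}. Integrating \eqref{vGrowth} in the distributional sense over the interval $[t,t+T]$ and rearranging gives
\[
 \hat s(r,t,T) \;\le\; \rci(r,t+T)-\rci(r,t) \;+\; \int_{t}^{t+T}\flux(V_\tau, D_r(\tau))\,d\tau ,
\]
so the task reduces to bounding the two terms on the right.

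For the volume term, the trivial bound $\rci(r,t+T)-\rci(r,t)\le \rci(r,t+T)\le |I_r|=(2r)^{\size}=2^{\size}r^{\size}$ is enough. For the flux term, $D_r(\tau)=\RR_\tau\cap\pd I_r\subset\pd I_r$, and the boundary of the cube $I_r$ has $2\size$ faces of $(\size-1)$-volume $(2r)^{\size-1}$ each, so $|\pd I_r|=\size 2^{\size}r^{\size-1}$. Since $|V_\tau|\le M$ pointwise (by assumption (i)), we get
\[
 \int_{t}^{t+T}\flux(V_\tau, D_r(\tau))\,d\tau \;\le\; M\,|\pd I_r|\,T \;=\; \size 2^{\size} M\, T\, r^{\size-1} .
\]

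Combining the two estimates and using $M\ge 1$ (which holds because $M=1+\sup|V_t|$),
\[
 \hat s(r,t,T) \;\le\; 2^{\size}r^{\size} + \size 2^{\size} M\, T\, r^{\size-1}
 \;\le\; \size 2^{\size} M \,(r+T)\,r^{\size-1} = C_1(r+T)r^{\size-1} ,
\]
which is exactly the desired inequality. There is no real obstacle here; the only subtlety is that \eqref{vGrowth} holds only in the distributional sense, but since $\rci(r,\cdot)$ is the sum of a Lipschitz and a non-decreasing function (as noted in the remark following Lemma \ref{lemma_fact1}), the integrated form is valid for all $t$ and $T$, and the absorption of the $2^\size r^\size$ term into $\size 2^\size M r\cdot r^{\size-1}$ is legitimate precisely because $M\ge 1$.
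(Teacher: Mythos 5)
Your proof is correct and follows essentially the same route as the paper: integrate the differential inequality from Lemma \ref{lemma_fact1}, bound the volume increment by $|I_r|=2^{\size}r^{\size}$, bound the flux integral by $M|\pd I_r|T=\size 2^{\size}MTr^{\size-1}$, and absorb the volume term using $\size M\ge 1$. The paper states the final combination more tersely, but the argument is identical.
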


\begin{proof}
From Lemma \ref{lemma_fact1} and a trivial estimate
$$
 |\flux(V_t,D_r(t))| \le M |\pd I_r|
$$
we have
$$
  \frac d{dt} \rci(r,t) \ge s(r,t) - M |\pd I_r|
$$
(in the sense of distributions).
By integrating this we obtain
$$
  \rci(r,t+T)-\rci(r,t) \ge \hat s(r,t,T) - M T |\pd I_r| .
$$
The left-hand side is bounded above by $|I_r|$. Hence
$$
 \hat s(r,t,T) \le |I_r|  +  M  T  |\pd I_r| .
$$
Since $|I_r|=2^nr^n$ and $|\pd I_r|=n2^n r^{n-1}$, \eqref{hatS-above} follows.
\end{proof}

The incompressibility and small mean drift assumptions
imply the following lemma, which we borrow from \cite{BIN}.
This is the only place in the proof where the small mean drift
assumption is used.

\begin{lem}[cf.~{\cite[Lemma 3.1]{BIN}}]\label{Aepsilon}
For every $\ep>0$ there exists $L_0>0$ such that the following holds.
Let $F$ be an $(n-1)$-dimensional cube with edge length $L\ge L_0$,
then
\begin{equation}\label{dosbissmall}
| \flux(V,F)| \le \ep L^{n-1}.
\end{equation}
\end{lem}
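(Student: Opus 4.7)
The plan is to combine incompressibility with the small mean drift condition applied at an intermediate scale $\ell$ much smaller than $L$. Without loss of generality take $F=\{0\}\times[0,L]^{n-1}$ and fix the time, which I suppress in notation. Define the shifted-face flux
\[
\phi(s):=\int_{[0,L]^{n-1}} V_1(s,y)\,dy,
\]
so that $\flux(V,F)=\phi(0)$, and I would first record two preliminary facts. Applying the divergence theorem to the slab $[s,s+h]\times[0,L]^{n-1}$ and bounding the lateral flux by $M$ times the lateral boundary area $2(n-1)hL^{n-2}$, one sees that $\phi$ is Lipschitz in $s$ with constant at most $2(n-1)ML^{n-2}$. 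Also, $\int_0^L\phi(s)\,ds$ is exactly the integral of $V_1$ over the cube $[0,L]^n$ and hence controlled by the mean drift applied at scale $L$.

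Next, given $\ep>0$, I would pick a smaller scale $\ell$ such that the mean of $V$ over any $\ell$-cube has norm less than $\ep/4$. For $L$ much larger than $\ell$, partition the tangent face $[0,L]^{n-1}$ into sub-cubes of side $\ell$ (together with a boundary layer of volume $O(\ell L^{n-2})$) and integrate $V_1$ over a thin slab $[a,a+\ell]\times[0,L]^{n-1}$. Applying the mean drift estimate to each sub-cube of the partition and a trivial $M$-bound on the boundary-layer part gives
\[
\left|\int_{[a,a+\ell]\times[0,L]^{n-1}}V_1\,dx\right|\le \tfrac{\ep}{4}\,\ell L^{n-1}+C(n)M\ell^{2} L^{n-2}.
\]
Dividing by $\ell$ bounds the average of $\phi$ over $[a,a+\ell]$ by $\tfrac{\ep}{4}L^{n-1}+C(n)M\ell L^{n-2}$.

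Finally, I would apply this average bound with $[a,a+\ell]=[-\ell,0]$ to produce some $s^{*}\in[-\ell,0]$ with $|\phi(s^{*})|$ at most this quantity, and combine with the Lipschitz bound over the displacement $|s^{*}|\le\ell$ to get
\[
|\phi(0)|\le|\phi(s^{*})|+2(n-1)M\ell L^{n-2}\le \Bigl(\tfrac{\ep}{4}+\tfrac{C'(n)M\ell}{L}\Bigr)L^{n-1}.
\]
Taking the threshold $L_{0}$ in the lemma large enough that $C'(n)M\ell/L_{0}\le\ep/4$ yields $|\phi(0)|\le\ep L^{n-1}$ for every $L\ge L_{0}$.

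The hard part is the two-scale balance: $\ell$ must be chosen first from $\ep$ via the mean drift modulus, and only then is $L_{0}$ chosen much larger than $\ell$ so that the Lipschitz correction $2(n-1)M\ell L^{n-2}$ becomes negligible compared to $\ep L^{n-1}$. A naive one-scale attempt using mean drift directly at scale $L$ fails: it only bounds the average of $\phi$ over an interval of length $L$, and the Lipschitz constant of order $L^{n-2}$ over a displacement of order $L$ permits fluctuations of size $ML^{n-1}$, which overwhelm the desired bound. Using the mean drift at a fixed intermediate scale $\ell$ independent of $L$ is exactly what decouples these two competing contributions.
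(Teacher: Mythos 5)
Your argument is correct. Note that the paper itself does not prove this lemma at all; it simply cites Lemma~3.1 of \cite{BIN} (proved there for time-independent fields) and observes that the threshold $L_0$ there depends only on $M$ and the rate of decay in~(iii), hence can be taken uniform in $t$. You have supplied a self-contained two-scale proof: the slab divergence-theorem bound gives $\phi$ a Lipschitz constant of order $ML^{n-2}$, the mean-drift hypothesis applied at a fixed intermediate scale $\ell$ controls the average of $\phi$ over an interval of length~$\ell$, and the Lipschitz correction over a displacement~$\le\ell$ is then $O(M\ell L^{n-2})=o(L^{n-1})$ once $L\gg\ell$. This is essentially the mechanism behind the cited lemma, and your closing observation about why a one-scale argument fails (the Lipschitz correction over a displacement of order $L$ swamps the bound) is exactly the right point. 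Two small caveats: first, the phrase ``smaller scale $\ell$'' is misleading --- hypothesis~(iii) is a large-scale condition, so $\ell$ must be chosen \emph{large} enough to make the mean drift small, and is ``smaller'' only relative to $L$; second, you take $F$ to be a coordinate hyperplane slice without comment, which is fine for the way the lemma is used in the paper (the $F_i$ in Lemma~\ref{flux-main} are always faces of $\partial I_r$), but strictly speaking the lemma as stated allows an arbitrarily oriented cube $F$, for which your tiling by axis-aligned $\ell$-cubes from~(iii) would need a small extra covering argument.
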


\begin{proof} 
This lemma is stated in \cite{BIN} for a time-independent vector field.
We apply \cite[Lemma 3.1]{BIN} to the vector field $V_t$
for every fixed~$t$. 
The constant $L_0$ (named $A_0$ in \cite[Lemma 3.1]{BIN})
depends on the vector field, so we need to make sure that
it can be chosen independently of $t$.
In the proof in~\cite{BIN} one can see that $L_0$
depends only on $M$ and on the rate of convergence of
the mean drift to~0. 
Hence the proof works for our Lemma \ref{Aepsilon} as well.
\end{proof}

\begin{lem}[cf.~{\cite[Lemma 3.3]{BIN}}]\label{flux-main}
For every $\ep>0$ there exist $r_1>0$ and $C_0>0$ such that for 
almost all $t>0$ and $r\ge r_1$,
\be\label{flux-mixed}
 |\flux(V_t,D_r(t))| \le C_0 p(r,t)+\ep r^{\size-1} .
\ee
\end{lem}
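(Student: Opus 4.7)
The plan is to tile $\partial I_r$ by $(n-1)$-dimensional sub-cubes of edge length $L$, chosen large enough that Lemma~\ref{Aepsilon} yields $|\flux(V_t,F)|\le\ep_1 L^{n-1}$ on each sub-cube $F$ (with $\ep_1>0$ to be specified later); the value of $L$ can be picked from a range $[L_0(\ep_1),2L_0(\ep_1)]$ so that $L$ divides $2r$, which is possible once $r_1\gg L_0$. Fix a threshold $\delta\in(0,1/2)$. For each sub-cube $F$, call $F$ \emph{empty} if $|D_r(t)\cap F|\le\delta L^{n-1}$, \emph{full} if $|F\setminus D_r(t)|\le\delta L^{n-1}$, and \emph{balanced} otherwise.

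On empty $F$, the pointwise bound $|V_t|\le M$ immediately gives $|\flux(V_t,D_r(t)\cap F)|\le M\delta L^{n-1}$. On full $F$, writing
$$
\flux(V_t,D_r(t)\cap F)=\flux(V_t,F)-\flux(V_t,F\setminus D_r(t))
$$
and applying Lemma~\ref{Aepsilon} to the first term and the $L^\infty$ bound to the second gives $|\flux(V_t,D_r(t)\cap F)|\le(\ep_1+M\delta)L^{n-1}$. Summing over all $\le 2n(2r/L)^{n-1}$ sub-cubes yields a total contribution of at most $C(n)(\ep_1+M\delta)r^{n-1}$ from the non-balanced cubes.

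For balanced cubes, the relative isoperimetric inequality inside each $(n-1)$-dimensional cube $F$ gives
$$
P_{n-2}(D_r(t)\cap F,F^\circ)\ge \lambda_1'\,\delta^{(n-2)/(n-1)}L^{n-2},
$$
while the slicing identification provided by Theorem~\ref{Maggi} lets us estimate the sum of the relative $(n-2)$-perimeters of the slices by $p(r,t)=\HA^{n-2}(\partial^*\RR_t\cap\partial I_r)$, that is $\sum_F P_{n-2}(D_r(t)\cap F,F^\circ)\le p(r,t)$. Hence the number of balanced sub-cubes is at most $p(r,t)/(\lambda_1'\delta^{(n-2)/(n-1)}L^{n-2})$, and since each contributes at most $ML^{n-1}$ to the flux, the total balanced contribution is at most $C_0\,p(r,t)$ with $C_0:=ML/(\lambda_1'\delta^{(n-2)/(n-1)})$. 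It remains to choose first $\delta$ small, then $\ep_1$ small (which fixes $L$), so that $C(n)(\ep_1+M\delta)\le\ep$, yielding the desired estimate~\eqref{flux-mixed}.

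The main obstacle I expect is the careful invocation of the slicing result that identifies $p(r,t)$ with the sum of relative $(n-2)$-perimeters of the slices $D_r(t)\cap F$ inside the open faces $F^\circ$: one must be sure that the reduced boundary $\partial^*\RR_t$ meets $\partial I_r$ in such a way that no cancellation or double-counting occurs between neighboring faces, and that the $(n-2)$-skeleton of $\partial I_r$ carries no $\HA^{n-2}$-mass that is missed by the sum. This is precisely the content of Theorem~\ref{Maggi} and the BV-function formalism developed in the appendix; the remaining ingredients (Lemma~\ref{Aepsilon} and the relative isoperimetric inequality in a cube) are classical.
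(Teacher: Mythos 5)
Your proof is correct and uses the same ingredients as the paper's: tiling $\pd I_r$ by $(n-1)$-dimensional sub-cubes of edge $L$ determined by Lemma~\ref{Aepsilon}, the relative isoperimetric inequality in the $(n-1)$-cube, and the boundary slicing Theorem~\ref{Maggi} to identify $\sum_F P_{n-2}(D_r(t)\cap F, F^\circ)$ with (at most) $p(r,t)$. Where you differ is in the final synthesis. You classify sub-cubes into empty, full, and balanced, handle the first two directly via the $L^\infty$ bound on $V_t$ together with Lemma~\ref{Aepsilon}, and then control the \emph{number} of balanced sub-cubes (each receiving the trivial flux bound $ML^{n-1}$) by noting that every balanced cube costs at least a fixed amount $\approx\delta^{(n-2)/(n-1)}L^{n-2}$ of $(n-2)$-perimeter, which cannot exceed $p(r,t)$ in aggregate. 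The paper avoids the trichotomy: writing $s_i=\min\{|F_i\cap D|,|F_i\setminus D|\}$, it observes that at least one of $|\flux(V_t,F_i\cap D)|$ and $|\flux(V_t,F_i\setminus D)|$ is $\le Ms_i$ while the two differ by at most $|\flux(V_t,F_i)|\le\ep L^{n-1}$, so \emph{both} are $\le Ms_i+\ep L^{n-1}$; combined with the isoperimetric bound $s_i\le CLp_i$ this gives the clean per-cube estimate $|\flux(V_t,F_i\cap D)|\le CMLp_i+\ep|F_i|$, which one simply sums. The paper's per-cube estimate saves the case bookkeeping and makes the $C_0 p(r,t)$ term arise as a sum over all cubes weighted by their actual local perimeter, rather than a crude count of balanced cubes times a worst-case bound; both routes, however, produce the same conclusion with a constant $C_0$ depending on $\ep$ through $L$, which is all the lemma requires.
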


\begin{proof}
This lemma could also be borrowed from \cite{BIN} if we had proven
certain regularity properties of $\RR_t$.
For the sake of completeness we include a proof here.
The proof is essentially the same but it is based on different
foundations in Geometric Measure Theory.

We fix $\ep>0$ and apply Lemma \ref{Aepsilon}.
Let $L_0$ be the constant provided by Lemma \ref{Aepsilon}.
Let $t>0$ be such that $\RR_t$ has finite perimeter.
Assume that $r\ge L_0$ and the following holds:
For every hyperplane $\Sigma$ containing one of the 
$(n-1)$-dimensional faces
of the cube $I_r$, the slice $\RR_t\cap\Sigma$ has finite perimeter
in $\Sigma\cong\R^{n-1}$ and its reduced boundary
in $\Sigma$ coincides with $\Sigma\cap\pd^*\RR_t$ up to a set
of zero $(n-2)$-dimensional Hausdorff measure.
By the Boundary Slicing Theorem \ref{Maggi}
these conditions are satisfied for almost all $r$.

Since $r\ge L_0$, we have $r=mL$ for some $L\in[L_0,2L_0]$ and $m\in\Z$.
We divide $\pd I_r$ into $(n-1)$-dimensional cubes $F_i$, $i=1,2,\dots,2nm^{n-1}$,
with edge length $L$.
Denote $D=D_r(t)$ for brevity.
For each $i$, define 
$$
s_i = \min\{|F_i\cap \RR_t|, |F_i\setminus\RR_t|\}
= \min\{|F_i\cap D|, |F_i\setminus D|\}
$$
and
$$
 p_i = P_{n-1}(D,F_i^\circ) = \HA^{n-2}(F_i^\circ\cap\pd^*\RR_t)
$$
where $P_{n-1}$ denotes the perimeter in the respective hyperplane
and $F_i^\circ$ is the relative interior of~$F_i$.
The last identity follows from the De Giorgi Theorem~\ref{dg}.

The isoperimetric inequality in
$(n-1)$-dimensional cubes
implies  that
$$
 s_i \le CL p_i
$$
where $C$ is a constant depending only on $\size$. 
For $\size \geq 3$, we prove this  isoperimetric inequality in Appendix, Corollary~\ref{isopercube-lite}. For $\size=2$ Corollary~\ref{isopercube-lite} is trivially true.
Therefore, we have
$$
 \bigl| |\flux(V_t,F_i\cap D)|-|\flux(V_t,F_i\setminus D)| \bigr| 
 \le |\flux(V_t,F_i)| \le \ep L^{n-1},
$$
where the second inequality follows from Lemma \ref{Aepsilon}.
At least one of the quantities $|\flux(V_t,F_i\cap D)|$
and $|\flux(V_t,F_i\setminus D)|$ is bounded by $Ms_i$,
hence both of them are bounded by $Ms_i+\ep L^{n-1}$.
Thus
$$
 |\flux(V_t,F_i\cap D)| \le M s_i + \ep L^{n-1} \le CMLp_i + \ep L^{n-1}
 \le C_0p_i + \ep |F_i|,
$$
where $C_0=2CML_0$.
Summing up over all $i$ yields that
$$
 |\flux(V_t,D)| 
 \le C_0 \sum p_i + \ep |\pd I_r|
 \le C_0 p(r,t)+n2^n \ep r^{n-1}
$$
for almost all $r\ge L_0$.
Since $\ep$ is arbitrary, the lemma follows.
\end{proof}

\begin{proof}[Proof of Proposition~\ref{lemma5}]
Fix $\beta,\ep>0$.
We apply Lemma \ref{flux-main} to $\ep_1:=\ep/2^{n+1}$ in place of $\ep$.
This yields
$$
|\flux(V_t,D_r(t))| \le C_0 p(r,t) +\ep_1 r^{\size-1}
$$
for almost all $r\ge r_1$ and $t>0$. This and \eqref{vGrowth} imply
$$
  \frac{d}{dt} \rci(r,t) \ge s(r,t) - C_0 p(r,t) - \ep_1 r^{\size-1} 
$$
for almost all $r>r_1$ and $t>0$. 
Integration in $t$ yields
\be\label{long-estimate}
 \rci(r,t+T)-\rci(r,t) \ge \hat s(r,t,T) - C_0 \hat p(r,t,T) - \ep_1 T r^{\size-1} 
\ee
for almost all $r>r_1$ and all $t, T>0$.

Define
$$
  h  := \frac { 2^{\size+1} C_0C_1 } {\ep_1} ,
$$
where $C_1$ is the constant from Lemma \ref{lemma3}.
By the co-area inequality~\eqref{co-area},
$$
 s (r+h,t) \ge \int_0^{r+h} p(x,t)\,dx \ge \int_r^{r+h} p(x,t)\,dx
$$
for all $r>0$ and almost all $t>0$.
Once again, integration in $t$ yields
\be\label{int-hatp}
 \hat s(r+h,t,T) \ge \int_r^{r+h} \hat p(x,t,T)\,dx
\ee
for all $r>0$ and all $t,T>0$.

Now let $r$ and $t$ be as in the formulation of Proposition \ref{lemma5}.
Namely $t>0$ is arbitrary, $r\ge r_0$ where $r_0$ is to be chosen later, and $0\le T\le r$.
We require that $r_0\ge r_1$ and $r_0\ge h$, the latter ensures that $h\le r$.
By Lemma \ref{lemma3} applied to $r+h$ in place of $r$,
$$
 \hat s(r+h,t,T) \le C_1(r+h+T) (r+h)^{\size-1}
 \le 2^{\size+1} C_1 r^{\size}
$$
since $T\le r$ and $h\le r$.
This and \eqref{int-hatp} imply that
there exists $\tilde r\in[r,r+h]$ such that
\be\label{bigbox}
  \hat p(\tilde r,t,T) \le \frac { 2^{\size+1} C_1 r^{\size}   }  h
  = C_0^{-1}\ep_1 r^{\size},
\ee
where the equality follows from the definition of $h$.
Furthermore the set of  $\tilde r\in[r,r+h]$ satisfying \eqref{bigbox}
has positive measure,
hence we can choose $\tilde r$ so that \eqref{bigbox} holds and 
\eqref{long-estimate} applies to $\tilde r$ in place of~$r$:
$$
 \rci(\tilde r,t+T)-\rci(\tilde r,t) 
 \ge \hat s(\tilde r,t,T) - C_0 \hat p(\tilde r,t,T) - \ep_1 T \tilde r^{\size-1} .
$$
This estimate, 
\eqref{bigbox}, and the inequalities $T\le r$ and $\tilde r\le 2r$ imply that
$$
 \rci(\tilde r,t+T)-\rci(\tilde r,t) 
 \ge \hat s(\tilde r,t,T) - \ep_1 r^n - 2^{n-1}\ep_1 r^n
 \ge \hat s(\tilde r,t,T) - 2^n\ep_1 r^n = \hat s(\tilde r,t,T) - \tfrac12 \ep r^n .
$$
Since $\tilde r\ge r$, we have $\hat s(\tilde r,t,T)\ge \hat s(r,t,T)$.
Thus
\be\label{almost-done}
 \rci(\tilde r,t+T)-\rci(\tilde r,t)
 \ge   \hat s(r,t,T)-\tfrac12 \ep r^n.
\ee

Now we estimate the difference between $\rci(\tilde r,t+T)$ and $\rci(r,t+T)$:
$$
  \rci(\tilde r,t+T) - \rci(r,t+T) = | \RR_{t+T} \cap (I_{\tilde r}\setminus I_r)|
  \le |I_{\tilde r}\setminus I_r| = 2^n (\tilde r^n-r^n) .
$$
The right-hand side is bounded as follows:
$$
 2^n (\tilde r^n-r^n) \le n 2^n (\tilde r - r) \tilde r^{n-1} 
 \le n 2^n h \tilde r^{n-1} \le n 2^{2n-1} h r^{n-1} \le \tfrac12 \ep r^n
$$
if we require that 
\be\label{rzero}
r\ge r_0\ge  n 2^{2n-1} h \ep^{-1}.
\ee
Thus
\be\label{cube-difference}
  \rci(\tilde r,t+T) - \rci(r,t+T) \le \tfrac12\ep r^{\size} .
\ee
This and a trivial inequality $\rci(r,t)\le\rci(\tilde r,t)$
imply that
$$
  \rci(r,t+T)-\rci(r,t) \ge \rci(\tilde r,t+T)-\rci(\tilde r,t) - \tfrac\ep2 r^{\size} 
  \ge \hat s(r,t,T)-\ep r^{\size} ,
$$
where the second inequality follows from \eqref{almost-done}.
This finishes the proof of Proposition \ref{lemma5}.
\end{proof}

\section{Middle stage. Proof of Proposition~\ref{sequence}}\label{middle} 

In this section we prove Proposition \ref{sequence},
the last remaining piece of the proof of Theorem \ref{newmain}.
The proof is based on Proposition \ref{lemma5}
and the isoperimetric inequality \eqref{isopercube}
for subsets of a cube.

\medskip

To facilitate understanding of the proof, we first give its simplified
version assuming that the estimate \eqref{lemma5-eq} from Proposition \ref{lemma5}
holds without the correction term $-\ep r^n$. After this simplification
the estimate \eqref{lemma5-eq} boils down to the differential inequality
\be\label{simple}
  \frac d{dt} \rci(r,t) \ge s(r,t) \ge \lambda_1 \min \{ \rci(r,t), |I_r|-\rci(r,t) \}^{\frac{n-1}n}
\ee
where the second inequality is the isoperimetric inequality \eqref{isopercube}.
This implies that $\rci(r,t) \ge \phi(t)$ where $\phi(t)>0$ solves the ODE
$$
  \frac d{dt} \phi(t) = \lambda_1 \min \{ \phi(t), |I_r|-\phi(t) \}^{\frac{n-1}n}
$$
with the initial condition $\lim\limits_{t\to 0+} \phi(t)=0$.
The solution is given by
$$
  \phi(t) = 
  \begin{cases}
    a t^n, &\quad t\in [0, b] , \\
    |I_r|-a(2b-t)^n, &\quad t\in[b,2b] ,
  \end{cases}
$$
where $a=(\lambda_1/n)^n$, and
$b=(\frac1{2a}|I_r|)^{1/n}=cr$ with $c=2^{\frac{n-1}n} n \lambda_1^{-1}$.
It reaches the value $\phi(t)=|I_r|$ at $t=2b=2cr$, and 
the coefficient $2c$ depends only on $n$.
This proves the main theorem under the above simplifying assumption.

The actual proof of Proposition \ref{sequence} is essentially 
a discrete version of the above argument.
We apply Proposition \ref{lemma5} to $T=\beta r$
where $\beta\in(0,1)$ is a carefully chosen constant
(depending on the flow but not depending on $r$).
This yields a lower bound for $\rci(r,T_k)$ 
where $T_k=T_0+k\beta r$, $k=1,2,\dots$.
It turns out that for a sufficiently small $\ep>0$
the term $\hat s(r,t)$ dominates the correction term $-\ep r^n$
and hence the resulting bound for $\rci(r,T_k)$
is similar to the formula for $\phi(T_k)$. This implies the desired conclusion.

Another technical issue is that the isoperimetric inequality \eqref{isopercube}
does not integrate well over time intervals.
This is handled in Lemma \ref{lemma6} below,
where we prove a discrete analogue of the differential inequality \eqref{simple}.

\medskip

Now we are back to the formal proof.
Recall that we have a fixed $\alpha$ defined by \eqref{def-alpha}. 
We now choose a small constant $\beta\in(0,1)$.
First we require that $\beta<\frac{\alpha}{10}$.
%
%
Second, we require that $\beta$ is so small that the following holds.
For all $x\in[\frac\alpha2,1]$ and all $\delta\in[0,\beta]$
\be\label{beta-req2}
 (x+\delta)^{1/{\size}} - x^{1/{\size}} \ge \frac1{2{\size}} x^{\frac{1-\size}\size} \delta .
\ee
Such $\beta$ exists since the function $x\mapsto x^{1/{\size}}$ is smooth
on $[\frac\alpha2,1]$ and its derivative equals $\frac{1}{\size} x^{\frac{1-\size}\size}$.

We fix $\alpha$ and $\beta$ for the rest of the proof.

\begin{lem}\label{lemma6}
There exist $\lambda=\lambda(\size)\in(0,1]$ and $r_0>0$ 
such that for every $r\ge r_0$ and $T=\beta r$
the following holds.

1. For all $t>0$ and $\tau\in[t,t+T]$,
\be\label{alpha/10}
 \rci(r,\tau) \ge \rci(r,t) - \tfrac\alpha{10} |I_r| .
\ee

2. If $t>0$ satisfies 
\be\label{away-from01}
  \tfrac\alpha2|I_r| \le \rci(r,t) \le (1-\tfrac\alpha2)|I_r| ,
\ee
then
\be\label{lemma6-eq}
 \rci(r,t+T) \ge \rci(r,t) +\lambda T m(t)^{\frac{\size-1}\size}
\ee
where
$$
 m(t)= \min\{ \rci(r,t), |I_r|-\rci(r,t)\}  . 
$$
\end{lem}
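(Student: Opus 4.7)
The plan is to reduce both parts to Proposition~\ref{lemma5} and the relative isoperimetric inequality~\eqref{isopercube}, treating Part~2 as a discrete analogue of the ODE reasoning sketched at the beginning of the section. I will invoke Proposition~\ref{lemma5} twice, with two different choices of the correction parameter~$\ep$; the constant $r_0$ will be the larger of the two thresholds so produced.

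Part~1 is immediate: applying Proposition~\ref{lemma5} with $\ep:=\tfrac{\alpha}{10}\cdot 2^{\size}$ to $T'=\tau-t\in[0,T]\subset[0,r]$, and using $\hat s(r,t,T')\geq 0$ together with $\ep r^{\size}=\tfrac{\alpha}{10}|I_r|$, gives \eqref{alpha/10}.

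For Part~2 I would split according to whether $\rci(r,t+T)-\rci(r,t)$ exceeds $\tfrac{\alpha}{10}|I_r|$. In the ``large increment'' case the conclusion is direct once $\lambda$ is small enough that $\lambda T m(t)^{(\size-1)/\size}\leq \tfrac{\alpha}{10}|I_r|$; since $T=\beta r$, $m(t)\leq\tfrac12 |I_r|$, and $\beta<\alpha/10$, any sufficiently small $\lambda=\lambda(\size)$ works. The interesting case is the complementary one. Here I would apply Part~1 in both directions: forward from $t$ yields $\rci(r,\tau)\geq\rci(r,t)-\tfrac{\alpha}{10}|I_r|$, while an application with starting time $\tau$ and length $t+T-\tau\leq T$ yields $\rci(r,\tau)\leq\rci(r,t+T)+\tfrac{\alpha}{10}|I_r|<\rci(r,t)+\tfrac{\alpha}{5}|I_r|$. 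Combined, $|\rci(r,\tau)-\rci(r,t)|\leq\tfrac{\alpha}{5}|I_r|\leq\tfrac{2}{5}m(t)$, using $m(t)\geq\tfrac{\alpha}{2}|I_r|$ from \eqref{away-from01}. A short case check, separating $\rci(r,t)\leq\tfrac12|I_r|$ from $\rci(r,t)>\tfrac12|I_r|$, upgrades this to the uniform bound $m(\tau)\geq\tfrac{3}{5}m(t)$ for all $\tau\in[t,t+T]$. Then \eqref{isopercube} gives $s(r,\tau)\geq\lambda_1(3/5)^{(\size-1)/\size}m(t)^{(\size-1)/\size}$; integrating and invoking Proposition~\ref{lemma5} with a second, smaller $\ep$ (chosen so that $\ep r^{\size}$ is at most half of $\lambda_1(3/5)^{(\size-1)/\size}Tm(t)^{(\size-1)/\size}$, which is a positive constant times $r^{\size}$ by the lower bounds on $T$ and $m(t)$) delivers \eqref{lemma6-eq} with $\lambda=\tfrac12\lambda_1(3/5)^{(\size-1)/\size}$, depending only on $\size$.

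The main subtlety is the sandwiching step, which relies on Part~1 being applicable from an arbitrary starting time. Beyond that, the proof is routine bookkeeping of constants, and the comparison between the error term $\ep r^{\size}$ and the main term $Tm(t)^{(\size-1)/\size}$ is easy for $r\geq r_0$ since both are constant multiples of $r^{\size}$.
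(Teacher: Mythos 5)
Your proof is correct, and it uses the same two ingredients as the paper's argument (Proposition~\ref{lemma5} and the relative isoperimetric inequality~\eqref{isopercube}), but it organizes the case analysis differently. The paper pivots on the quantity $m_0:=\inf_{\tau\in[t,t+T]}m(\tau)$: if $m_0<\frac12 m(t)$ the paper shows directly, using Part~1 applied at some intermediate time, that the endpoint increment must exceed $\tfrac{\alpha}{10}|I_r|$, and so \eqref{lemma6-eq} is trivial for any $\lambda\le 1$; if $m_0\ge\frac12 m(t)$ the isoperimetric bound integrates to give \eqref{lemma6-eq} with $\lambda=\frac14\lambda_1$. You instead pivot on the endpoint increment itself, and in the small-increment case derive the lower bound $m(\tau)\ge\frac35 m(t)$ via the two-sided application of Part~1 (forward from $t$, and forward from $\tau$ to $t+T$). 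These two decompositions are essentially contrapositives of each other, and both deliver a constant $\lambda=\lambda(\size)$ depending only on $\size$; your constant $\frac12\lambda_1(3/5)^{(\size-1)/\size}$ is marginally better than the paper's $\frac14\lambda_1$. The only cosmetic difference is that you invoke Proposition~\ref{lemma5} twice with two different values of $\ep$ and take $r_0$ to be the larger threshold, whereas the paper fixes a single $\ep<\min\{\alpha/10,\,\lambda_1\alpha\beta/16\}$ that serves both Part~1 and Part~2 at once. One trivial typo: in the bound $\rci(r,\tau)\le\rci(r,t+T)+\tfrac{\alpha}{10}|I_r|\le\rci(r,t)+\tfrac{\alpha}{5}|I_r|$ the second inequality should be non-strict, since the small-increment hypothesis is $\rci(r,t+T)-\rci(r,t)\le\tfrac{\alpha}{10}|I_r|$. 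This does not affect the argument.
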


\begin{proof}
Fix a sufficiently small $\ep>0$, namely
$$
  \ep < \min \{ \tfrac{\alpha}{10} , \tfrac1{16}\lambda_1\alpha\beta \} ,
$$
where $\lambda_1=\lambda_1(n)$ is the isoperimetric constant from \eqref{isopercube}.
By Proposition \ref{lemma5} there exists $r_0>0$ such that
\be\label{l5-eq}
 \rci(r, \tau)-\rci(r,t) \ge \hat s(r,t, \tau) - \ep r^{\size}
\ee
for any $r\ge r_0$, $T=\beta r$, and  $\tau\in[t,t+T]$.
Since $\hat s(r,t,\tau)\ge 0$, this implies that
$$
 \rci(r, \tau) - \rci(r,t) \ge - \ep r^{\size} > - \tfrac{\alpha}{10} |I_r|
$$
due to the choice of $\ep$. This proves the first claim of the lemma.

To prove the second one,
define 
$$
m_0 = \inf \{ m(\tau) : \tau\in[t,t+T] \}
$$
and consider two cases: $m_0<\frac12 m(t)$ and $m_0\ge \frac12 m(t)$.

\medskip

\textit{Case 1: $m_0<\frac12 m(t)$}.
Then $m(\tau)<\frac12 m(t)$ for some $\tau\in[t,t+T]$.
The definition of $m(t)$ and \eqref{away-from01}
imply that 
\be\label{subcase-a}
 |\rci(r,\tau) - \rci(r,t)| > \tfrac\alpha4|I_r| .
\ee
The inequality \eqref{alpha/10} rules out the case $\rci(r,\tau)<\rci(r,t)$,
 hence
$$
 \rci(r,\tau) > \rci(r,t) + \tfrac\alpha4|I_r| .
$$
Combining this inequality with \eqref{alpha/10} applied to $\tau$ and $t+T$ in place of $t$ and $\tau$, respectively, yields
\be\label{3/10}
 \rci(r,t+T) \ge \rci(r,\tau)-\tfrac\alpha{10}|I_r|
 > \rci(r,t)+\tfrac{\alpha}{10}|I_r| .
\ee
On the other hand, by the trivial estimate $m(t)\le|I_r|=(2r)^{\size}$ we have
$$
 Tm(t)^{\frac{\size-1}\size } \le T (2r)^{\size-1} = \beta r (2r)^{\size-1} = \tfrac\beta2|I_r| < \tfrac{\alpha}{10}|I_r| .
$$
This and \eqref{3/10} imply \eqref{lemma6-eq} for any $\lambda\le 1$.

\medskip

\textit{Case 2: $m_0\ge\frac12 m(t)$}.
By the isoperimetric inequality \eqref{isopercube} for subsets of the cube,
$$
 s(r,\tau) \ge \lambda_1 m(\tau)^{\frac{\size-1}\size} 
 \ge \lambda_1 m_0^{\frac{\size-1}\size} \ge \tfrac12\lambda_1 m(t)^{\frac{\size-1}\size}
$$
for all $\tau\in[t,t+T]$. Hence
\be\label{isoper}
  \hat s(r,t,T) \ge \tfrac12\lambda_1 T m(t)^{\frac{\size-1}\size} .
\ee
By \eqref{away-from01}, we have $m(t)\ge\frac\alpha2|I_r| = \frac\alpha2(2r)^{\size}$.
Therefore
\be\label{1/8}
 \hat s(r,t,T) \ge \tfrac12\lambda_1 T m(t)^{\frac{\size-1}\size}
 = \tfrac12\lambda_1 \beta r m(t)^{\frac{\size-1}\size} 
 \ge \tfrac14 \lambda_1 \beta r \big(\tfrac\alpha2(2r)^n\big)^{\frac{\size-1}\size} 
 > \tfrac18\lambda_1\alpha\beta r^n > 2\ep r^n,
\ee
where the last inequality follows from the choice of $\ep$.
Inequalities \eqref{1/8}, \eqref{l5-eq} and \eqref{isoper} imply that
\be\label{lambda-final}
 \rci(r,t+T)-\rci(r,t) 
 \ge \hat s(r,t,T) - \ep r^n
 \ge \tfrac12 \hat s(r,t,T) 
 \ge \tfrac14 \lambda_1 T m(t)^{\frac{\size-1}\size} .
\ee
The inequality \eqref{lambda-final} implies \eqref{lemma6-eq} for $\lambda=\tfrac14\lambda_1$.

Combining the outcomes of the two cases, one sees that \eqref{lemma6-eq}
holds for $\lambda=\min\{1,\frac14\lambda_1\}$.
\end{proof}

Now we are in a position to prove Proposition \ref{sequence}.
The proof is a straightforward but technical implication
of Lemma \ref{lemma6}.
Nothing beyond basic analysis is used.

\begin{proof}[Proof of Proposition~\ref{sequence}]
Let $r_0$ be such that the assertion of Lemma \ref{lemma6} holds.
Fix $r>r_0$ and define a function $f:\R_+\to[0,1]$ by
$$
  f(t) := \frac{\rci(r,t)}{|I_r|} = \frac{\rci(r,t)}{(2r)^{\size}} .
$$
We rewrite some of the previous results in terms of $f$.
First, Lemma \ref{start} turns into the inequality
\be\label{f1}
  f(T_0) \ge \alpha \qquad\text{where}\qquad T_0 = \tfrac r{2M} .
\ee
By the first statement of Lemma \ref{lemma6} we have
\be\label{f2}
 f(\tau) \ge f(t) -\tfrac\alpha{10} \qquad\text{if}\qquad t\le\tau\le t+\beta r .
\ee
Finally, the second statement of Lemma \ref{lemma6} takes the form:
\be\label{f3}
 f(t+\beta r) \ge f(t) + \tfrac12 \lambda \beta \min\{f(t),1-f(t)\}^{\frac{\size-1}\size}
 \quad\text{provided that}\quad \tfrac\alpha2\le f(t)\le 1-\tfrac\alpha2 .
\ee
Here $\lambda=\lambda(\size)\in(0,1]$ is the constant from Lemma \ref{lemma6},
and we use this notation throughout the rest of the proof.

In our new notation the statement of Proposition~\ref{sequence} turns into
$$
 f(t)> 1-\alpha \quad\hbox{for all}\quad t\ge A r
$$
where $A$ is a constant depending only on $n$.

Now consider a sequence $\{y_k\}_{k=0}^\infty$ defined by
$y_k=f(T_0+k\beta r)$.
The relations \eqref{f1}--\eqref{f3} imply the following properties
of this sequence:
\begin{enumerate}
\item $y_0\ge\alpha$~\label{oneP};
\item if $\frac\alpha2 \le y_k \le \frac12$ then $y_{k+1} \ge y_k+\frac12\lambda\beta y_k^{\frac{\size-1}\size}$~\label{twoP};
\item if $\frac12\le y_k \le 1-\frac\alpha2$ then $y_{k+1} \ge y_k+\frac12\lambda\beta(1-y_k)^{\frac{\size-1}\size}$~\label{threeP};
\item if $y_k\ge 1-\frac\alpha2$ then $y_{k+1} \ge 1-\frac6{10}\alpha$~\label{fourP}.
\end{enumerate}
It follows that $\{y_k\}$ increases as long as it stays below $1-\frac\alpha2$,
and if it gets above $1-\frac\alpha2$ then
after that it is confined to the interval $[1-\frac6{10}\alpha,1]$.
We are going to prove that $y_k$ eventually attains a value greater than $1-\frac\alpha2$,
and estimate the index $k$ for which this happens.

If $y_k\le\frac12$ then by~\eqref{oneP} and~\eqref{twoP}
we have $y_k\ge y_0\ge\alpha$ and
$ y_{k+1} \ge y_k+\delta_k$
where $\delta_k =\frac12\lambda\beta y_k^{\frac{\size-1}\size}$.
Hence
$$
 y_{k+1}^{1/{\size}} \ge (y_k+\delta_k)^{1/{\size}} \ge y_k^{1/{\size}} + \frac1{2{\size}} y_k^{\frac{1-\size}\size}\delta_k 
 = y_k^{1/{\size}} + \frac{\lambda\beta}{4{\size}} .
$$
Here the second inequality follows from the choice of $\beta$
(see \eqref{beta-req2}) and the fact that $\delta_k\le\beta$ since $\lambda\le 1$ and $y_k\le 1$.
By induction it follows that
$$
  y_k^{1/{\size}} \ge y_0^{1/n} + \frac{\lambda\beta k}{4{\size}} > \frac{\lambda\beta k}{4{\size}}
$$
as long as $y_0,\dots,y_{k-1}\le \frac12$.
Hence there exists $k_1\le \frac{4{\size}}{\lambda\beta}$ such that $y_{k_1} \ge\frac12$.

Now consider $k\ge k_1$. Note that $y_k\ge\frac12$ by \eqref{twoP}--\eqref{fourP}.
As long as $y_k\le 1-\frac\alpha2$, we have
\be
\label{yn1}
 y_{k+1} \ge y_k + \delta_k,
\ee
where
$$
  \delta_k =\tfrac12\lambda\beta (1-y_k)^{\frac{\size-1}\size} .
$$
We rewrite \eqref{yn1} as follows:
$$
 (1-y_{k+1})^{1/{\size}} \le (1-y_k-\delta_k)^{1/{\size}} 
 \le (1-y_k)^{1/{\size}} - \frac1n (1-y_k)^{\frac{1-\size}\size} \delta_k
 =  (1-y_k)^{1/{\size}} - \frac{\lambda\beta}{2{\size}} .
$$
Here the second inequality follows from the concavity of the function $t\mapsto t^{1/{\size}}$.
By induction it follows that
$$
 (1-y_k)^{1/{\size}} 
 \le (1-y_{k_1})^{1/{\size}} -\frac{\lambda\beta}{2{\size}} (k-k_1)
 \le 1 - \frac{\lambda\beta}{2{\size}} (k-k_1)
$$
as long as $y_{k_1},\dots, y_{k-1}\le 1-\frac\alpha2$.
Hence there exists $k_2 \le k_1+\frac{2{\size}}{\lambda\beta} \le \frac{6{\size}}{\lambda\beta}$
such that $y_{k_2} \ge 1-\frac\alpha2$.
Then~\eqref{threeP} and~\eqref{fourP} imply that $y_k\ge 1-\frac6{10}\alpha$ for all $k\ge k_2$.

This and \eqref{f2} imply that $f(t)\ge 1-\frac7{10}\alpha$ for all 
$t\ge T_0+\beta r k_2$.
Since  $T_0+\beta r k_2 \le T_0 + \frac{6{\size}}\lambda r \le (\frac{6{\size}}\lambda+1) r$,
the statement of Proposition \ref{sequence} holds for $A=\frac{6{\size}}\lambda+1$.
\end{proof}

\section{Application to homogenization of the G-equation}\label{G_e}

In this section we prove a result about  the homogenization limit of solutions to the G-equation with random drift. 
The proof of this result is a corollary of Theorem~\ref{newmain} combined with standard arguments of the homogenization theory. We give these arguments here for convenience of the reader. 
We start with the notions needed to formulate our result.

We investigate the asymptotic behavior as $\ep \to 0$ of the solutions of the family of the initial value problems, parametrized by $\ep$.  Namely, we consider the  family of
Hamilton-Jacobi equations:
\begin{gather}
u^\ep_t + V_{\frac{t}{\ep}} \left(\frac{x}{\ep},\event \right) \cdot Du^\ep = \abs{Du^\ep} , 
 \quad t > 0 , \;\; x \in \R^{\size}, \label{Geqn} \\
u^\ep = u_0(x),\quad t = 0, \;\;x \in \R^{\size}, \no
\end{gather}
for the unknown $u^\ep=u^\ep(t,x,\omega)$,
where $u^\ep_t$ and $Du^\ep$ are the derivatives of $u^\ep$
with respect to $t$ and $x$, respectively.
%
%
Here $\event$ is an elementary event (realization) in the sample space: $\event \in \Events$.
We assume the sample space is a part of the probability triple $(\Events, \mathcal{F}, \Pm)$, 
where $\mathcal{F}$ is the $\sigma$-algebra of measurable events, and
$\Pm$ is the probability measure. The velocity
\[
 V_t:\R^{\size+1} \times \Events \to \R^{\size}  
\]
is a random field, a family of random variables parametrized  by $x$ and $t$.
All random variables are assumed Borel measurable.

If $V_t$ is locally Lipschits, then, by, e.g. Exercise 3.9 in~\cite{Bardi}, we are guaranteed that the viscosity solutions of the G-equation~~\eqref{Geqn}
  are unique in the space of bounded and uniformly continuous functions for every fixed $\event$.
 These solutions $u^\ep(t,x,\event)$ of~\eqref{Geqn} are random functions in $x$ and~$t$.
Our objective is to determine assumptions on $V_t(x,\event)$ that imply {\it the law of large numbers}: 
 $u^\ep(t,x,\event) \to \bar u(t,x)$  with probability one as $\ep \to 0$, and characterize the deterministic limit $\bar u(t,x)$ as a solution of another 
 {\it homogenized} initial value problem.  In order to determine this homogenized initial value problem, we will find a deterministic time-independent function $\bar H: \R^\size \to \R^+$ such that it 
 is positively homogeneous of degree one, that is $\bar H(\lambda p) = \lambda \bar H(p)$ for all $\lambda > 0$ and $p \in \R^{\size}$, and verify that 
$\bar u$ is the unique viscosity solution of the initial value problem
\begin{gather}\label{limit_h}
\bar u_t = \bar H(D \bar u), \quad x \in \R^{\size},\; t > 0,  \\
\bar u(0,x) = u_0(x) , \quad x \in \R^{\size}. \no
\end{gather}
The solutions  of~\eqref{Geqn} have a control-representation formula~\eqref{controlrep}. Similarly solutions of~\eqref{limit_h} are given by the Hopf-Lax formula~\cite{H,L}
\be\label{HopfLax}
\bar u(t,x) = \max \{ u_0(y) : \bar{T} \left( x-y \right) \leq t \},
\ee
where
\[
\bar{T}(v) = \sup \left \{ v \cdot q \;:\;\; q \in \R^\size, \bar{H}(q)=1 \right  \}.
\]
The  following two definitions are needed to state our assumptions on $V_t(x,\event)$.

\begin{defin}  
We say that $V_t(x, \event)$ is space-time stationary  if there is 
an action of $\R^{n+1}$ on $\Omega$,
denoted by $y\mapsto \pi_y: \Events \to \Events$, $y=(x, t) \in \R^{\size+1}$,
such that the action is measure-preserving:
\be\label{stat}
\Pm(\pi_y(A)) = \Pm(A), \quad \forall A \in \mathcal{F}, \ y\in \R^{\size+1},
\ee
and
\be\label{stat2}
V_{t_0} (x_0, \pi_y \event) = V_{t_0+t} (x_0+x, \event),  \quad
\forall x_0\in\R^n, \ t_0\in\R, \ y=(x,t)  \in \R^{\size+1}.
\ee
\end{defin}

\begin{defin}  
Define 
\be
\mathcal G_{t^+} := \sigma \left\{V_{s}(x, \event) : s \geq  t, x\in  \R^\size \right\},
\quad
\mathcal G_{t^-}:= \sigma \left\{V_{s}(x, \event) : s \leq  t, x\in   \R^\size \right\},
\ee
where $\sigma\{\dots\}$ denotes the $\sigma$-algebra on $\Omega$ generated
by the given family of random variables.
We say $V_t$ has {\em finite range of  time dependence} if
\be\label{finite_range}
\exists \Range>0 \hbox{ such that }\mathcal G_{t^+}  \hbox{ and } \mathcal G_{s^-} 
\hbox{ are independent when }  t-s   \geq \Range.
\ee
\end{defin}

We state the result in two essentially equivalent ways.

\begin{theorem}\label{limit_shape}
Suppose that a random vector field
$V_t\colon\R^{\size+1} \times \Events \to \R^{\size}$ is time-space stationary~\eqref{stat}--\eqref{stat2}, 
has finite range of time dependence~\eqref{finite_range},
$V_t(\cdot,\event)$ is locally Lipschitz and incompressible for all $t$ and $\event$,
and has the following uniform bounds:
\be\label{uniform_M}
 M:=1+\sup_{t,x,\event}|V_t(x,\event)|<\infty ,
\ee
\be\label{uniform_Delta}
 \Delta:=\inf_{L>0} \sup_{t,x,\event}
 \left\| \frac{1}{L^{\size}} \int_{[0,L]^{\size}} V_t(x+y,\event) \, dy \right\| <1 .
\ee
Then there exists a convex body $W\subset\R^n$
such that $B_{1-\Delta}(0)\subset W\subset B_M(0)$
and
$$
  \lim_{t\to\infty} d_H(t^{-1} \RR_t(\event) , W) = 0
$$
for a.e.\ $\event\in\Events$,
where $\RR_t(\event)$ is the reachable set 
from $(0,0)$ at time $t$ (see Section \ref{prelim})
of the flow $V_t(x,\event)$ 
and
$d_H$ denotes the Hausdorff distance.
\end{theorem}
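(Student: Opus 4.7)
The plan is to analyze the reach-time function $T(x, \event) := \inf\{t \geq 0 : x \in \RR_t(\event)\}$ and show that $T(nx,\event)/n$ converges, for each fixed $x$, to a deterministic limit $\bar T(x)$ via a subadditive ergodic argument; the convex body will be $W := \{x \in \R^{\size} : \bar T(x) \leq 1\}$.

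First I would collect two-sided deterministic bounds on $T$. The speed bound $|\dot\gamma| \leq 1 + |V_t| \leq M$ for admissible paths yields $T(x,\event) \geq |x|/M$, which already gives the easy inclusion $t^{-1}\RR_t(\event) \subset B_M(0)$. Conversely, Corollary~\ref{new_cor} together with Remark~\ref{newcor_uniform} and the uniform hypotheses \eqref{uniform_M}--\eqref{uniform_Delta} supplies, for each $a > 1/(1-\Delta)$, a constant $C = C(a)$ independent of $\event$ such that $T(x,\event) \leq a|x| + C$. Concatenating an optimal path from $0$ to $x$ with a near-optimal detour from $x$ to $y$ produces the deterministic quasi-Lipschitz estimate $|T(x,\event) - T(y,\event)| \leq a|x-y| + C$, uniform in $\event$.

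Subadditivity of $T$ follows from concatenation of admissible paths combined with the space-time stationarity \eqref{stat}--\eqref{stat2}, in the form
$$
T(x+y, \event) \leq T(x, \event) + T(y, \pi_{(x, T(x, \event))}\event).
$$
The finite range of time dependence \eqref{finite_range} forces the time-shift semigroup $s \mapsto \pi_{(0,s)}$ to be strongly mixing, and in particular ergodic, with respect to $\Pm$. Applying Kingman's subadditive ergodic theorem (to the process $n \mapsto T(n\xi, \event)$ for each $\xi$ in a countable dense subset of $\R^{\size}$) then produces a deterministic limit $\bar T(\xi)$. The uniform deterministic bounds extend $\bar T$ to a continuous, positively homogeneous, subadditive (hence convex) function on $\R^{\size}$ satisfying $|x|/M \leq \bar T(x) \leq |x|/(1-\Delta)$. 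The sublevel set $W := \{x : \bar T(x) \leq 1\}$ is then a convex body with $B_{1-\Delta}(0) \subset W \subset B_M(0)$.

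The Hausdorff convergence $d_H(t^{-1}\RR_t(\event), W) \to 0$ is equivalent to uniform-on-compacts convergence of $x \mapsto t^{-1} T(tx, \event)$ to $\bar T(x)$, because $t^{-1}\RR_t(\event) = \{x : t^{-1} T(tx, \event) \leq 1\}$. The quasi-Lipschitz bound makes the rescaled family $\{x \mapsto t^{-1} T(tx, \event)\}_{t \geq 1}$ equicontinuous on compact sets, so a.s.\ pointwise convergence on the countable dense set upgrades, via Arzel\`a-Ascoli, to a.s.\ uniform convergence on compact sets, concluding the proof. The main technical obstacle is the correct application of the subadditive ergodic theorem in this mixed space-time setting: the natural ``shift'' in the inequality above is by the random amount $T(n\xi, \event)$ rather than a deterministic one, so one must arrange the subadditive process so that stationarity holds with respect to a genuine measure-preserving action. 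Using only the pure time-shifts $\pi_{(0,s)}$ combined with the mixing supplied by \eqref{finite_range} sidesteps this subtlety cleanly.
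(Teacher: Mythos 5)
The application of Kingman's subadditive ergodic theorem is a genuine gap, and the remark at the end of your proposal does not close it. You write the key subadditivity as
$T(x+y, \event) \leq T(x, \event) + T(y, \pi_{(x, T(x, \event))}\event)$,
in which the shift in the second term is by the \emph{random} time $T(x,\event)$. Kingman's theorem requires a two-parameter process $X_{m,n}$ satisfying $X_{0,n}\le X_{0,m}+X_{m,n}$ together with distributional stationarity of the form $X_{m,n}(\event)=X_{0,n-m}(\theta^m\event)$ for a fixed measure-preserving map $\theta$; no such deterministic $\theta$ encodes the random shift $\pi_{(x,\,T(x,\event))}$. Your suggestion that mixing of the pure time-shift semigroup $s\mapsto\pi_{(0,s)}$ ``sidesteps this subtlety cleanly'' is not substantiated: mixing is a correlation-decay property of a fixed transformation, and does not convert a random-shift-indexed family of reach times into a stationary subadditive process. (There is also a smaller issue: $\RR_t$ is the set reachable at \emph{exactly} time $t$, so $t^{-1}\RR_t(\event)=\{x:t^{-1}T(tx,\event)\le 1\}$ does not hold literally; one needs the near-monotonicity~\eqref{Rt-monotone}, itself a consequence of Corollary~\ref{new_cor}, and the paper builds a corrected travel time $\tau$ with a $+\tau_0$ offset to handle exactly this.)

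This is not an incidental technical wrinkle: the paper's own proof of Theorem~\ref{limit_shape} shows that the missing piece is precisely the hard direction. From the sub-additivity relation~\eqref{super2} and the independence supplied by finite range of time dependence (used via a strong law of large numbers for triangular arrays, not Kingman), Lemma~\ref{lower_} obtains only $\limsup_{\lambda}\tau(\lambda v)/\lambda=\overline T(v)$ almost surely together with convergence \emph{in probability}. That $\limsup$ bound gives the a.s.\ inner inclusion $\rho^-\to 0$ in Lemma~\ref{limsup-prob}, but the a.s.\ outer inclusion $\rho^+\to 0$ --- which is what a genuine Kingman limit would give you for free in a static environment --- requires the separate, substantially harder argument of Lemmas~\ref{Lemma_3} and~\ref{limsup-as_}: one passes to the support functions $H_t(p,\event)$, which enjoy a cleaner sub-additivity under temporal decomposition~\eqref{hsubadd}, and then runs an independence-based concentration argument along geometric progressions to upgrade from in-probability to a.s.\ convergence. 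Your Arzel\`a--Ascoli step, which would upgrade pointwise a.s.\ convergence of $T(t\xi,\event)/t$ on a dense set to locally uniform convergence, is fine as far as it goes; but pointwise a.s.\ convergence (in particular the a.s.\ $\liminf$ bound) is exactly what has not been established and is the crux of the theorem.
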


\begin{theorem}\label{Gthm}
Let $V_t:\R^{\size+1} \times \Events \to \R^{\size}$ be a random vector field satisfying
the same assumptions as in Theorem \ref{limit_shape}.
Then there exists a positively one-homogeneous convex
Hamiltonian function $\bar H:\R^{\size} \to [0,\infty)$ 
with $1-\Delta \leq \bar H(p)/\abs{p} \leq M$
such that the following holds with probability one:
For every bounded uniformly continuous function $u_0:\R^n\to\R$ one has
\be
\forall T>0~\forall R>0~\lim_{\ep \to 0} \sup_{t \in [0,T]} \sup_{\abs{x} \leq R}  \left | u^\ep(t,x,\event) - \bar u(t,x) \right| = 0, \label{homconv}
\ee
where $u^\ep$ and $\bar u$ are the unique viscosity solutions of \eqref{Geqn} and \eqref{limit_h}, respectively.
\end{theorem}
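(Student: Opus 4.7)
The plan is to deduce Theorem~\ref{Gthm} from Theorem~\ref{limit_shape} by the standard control-theoretic approach to first-order Hamilton--Jacobi homogenization. First I would identify the homogenized Hamiltonian. Let $W$ be the convex body produced by Theorem~\ref{limit_shape}. Because the finite-range assumption renders the tail $\sigma$-algebra $\bigcap_{t}\mathcal G_{t^+}$ trivial and $W$ depends only on the large-$t$ behavior of $\RR_t$ (hence is tail-measurable), $W$ is almost surely deterministic. Define
$$
 \bar H(p) := \sup_{v \in W} v \cdot p ,
$$
the support function of $W$. This is positively one-homogeneous and convex, the inclusion $B_{1-\Delta}(0) \subset W \subset B_M(0)$ yields $1-\Delta \le \bar H(p)/|p| \le M$, and the gauge $\bar T$ in \eqref{HopfLax} is the Minkowski functional of $W$, so the Hopf--Lax formula reduces to $\bar u(t, x) = \sup\{u_0(y) : x - y \in tW\}$.

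Next I would rewrite $u^\ep$ in the same form. Rescaling an admissible path for the $\ep$-scaled flow by $\tilde\gamma(s) = \gamma(\ep s)/\ep$ turns it into an admissible path for $V(\event)$, so \eqref{controlrep} gives
$$
 u^\ep(t, x, \event) = \sup\bigl\{ u_0(y) : x - y \in \ep\bigl(\RR_{t/\ep}(y/\ep, 0) - y/\ep\bigr)\bigr\} ,
$$
where the $\event$-dependence is suppressed. By the space-time stationarity \eqref{stat}--\eqref{stat2}, $\RR_{t/\ep}(y/\ep, 0)(\event) - y/\ep = \RR_{t/\ep}(\pi_{y/\ep, 0}\event)$; since admissible paths in the $\ep$-scaled flow have speed at most $M$, the supremum is effectively over $y \in B_{Mt}(x)$. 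The theorem therefore reduces to the almost sure uniform limit-shape statement: for every $T, R>0$,
$$
 \sup_{t \in [0, T],\; |y| \le R}\, d_H\bigl(\ep\, \RR_{t/\ep}(\pi_{y/\ep, 0}\event),\; tW\bigr) \to 0 \quad \text{as } \ep \to 0 ,
$$
because together with the uniform continuity of $u_0$ this converts the $y$-supremum of $u_0$ over $\{x - y \in \ep\RR_{t/\ep}(\pi_{y/\ep, 0}\event)\}$ into the $y$-supremum over $\{x - y \in tW\}$, which is exactly $\bar u(t, x)$; uniqueness of viscosity solutions of \eqref{limit_h} then identifies the limit with $\bar u$ and yields \eqref{homconv}.

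The main obstacle is promoting the pointwise-in-translate almost sure convergence supplied by Theorem~\ref{limit_shape} to convergence that is almost surely uniform over all spatial shifts $y/\ep$ simultaneously. I would proceed in two stages. On a countable dense set of pairs $(t, y) \in \mathbb{Q}_+ \times \mathbb{Q}^{\size}$, Theorem~\ref{limit_shape} applied to each shifted flow $V(\pi_{y/\ep, 0}\event)$ — which by stationarity has the same law as $V(\event)$ — gives convergence on a full-measure event, and intersecting these countably many events preserves full measure. Uniformity in $t$ comes for free because $\ep\RR_{t/\ep}$ is $M$-Lipschitz in $t$ in Hausdorff distance. The truly delicate step is uniformity in $y$: here I would invoke the finite-range-of-time-dependence assumption, which at scale $\Range$ decouples $\mathcal G_{t^+}$ and $\mathcal G_{s^-}$, and use a Borel--Cantelli argument along a sparse sequence $\ep_n \downarrow 0$ to rule out exceptional shifts exhibiting abnormal fluctuations of the reach time. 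Once this uniform Hausdorff convergence is established, the remainder of the verification of \eqref{homconv} is routine.
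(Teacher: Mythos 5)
Your high-level plan matches the paper's: define $\bar H$ as the support function of the limit body $W$, use the control/Hopf--Lax representations to express both $u^\ep$ and $\bar u$ as suprema of $u_0$ over (rescaled) reachable sets, and reduce the theorem to a statement about Hausdorff convergence of the rescaled reverse reachable sets $\ep\,\RR^-_{t/\ep}(x/\ep,\event)$ to $x-W_t$, uniformly for $|x|\le R$ and $t\in[h,T]$. The paper does exactly this in its final proof, using \eqref{est1} and \eqref{est3} (from Lemmas~\ref{limsup-prob} and~\ref{limsup-as_}) together with uniform continuity of $u_0$ to handle $t\in[0,h]$.

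The genuine gap is in the step you flag as "truly delicate," and your sketch of how to close it does not work as stated. You propose to obtain the uniform-over-shifts convergence by applying Theorem~\ref{limit_shape} to each shifted flow $V(\pi_{y/\ep,0}\event)$ over a countable dense set of $(t,y)$ and intersecting the resulting a.s.\ events. But for a fixed $y$ the shift $y/\ep$ varies with $\ep$, so you are not fixing a single flow; you are running the limit $\ep\to 0$ along a one-parameter family of diagonally scaled base points $\lambda x_0$ with $\lambda=1/\ep$. That diagonal-scaling a.s.\ convergence is \emph{not} a consequence of Theorem~\ref{limit_shape} via stationarity alone --- stationarity gives equality of laws, not simultaneous a.s.\ convergence --- and it is precisely what the paper must prove from scratch via sub-additivity and the strong law of large numbers for triangular arrays (Lemma~\ref{lower_}, part~(i)). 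Moreover, the two directions of the Hausdorff estimate behave very differently. The \emph{lower} bound (reachable set contains $W_{t(1-\delta)}$) is indeed amenable to an $\ep$-net argument combined with Lemma~\ref{lower_}; this is the content of \eqref{est1} in Lemma~\ref{limsup-prob}. But the \emph{upper} bound (reachable set is contained in $W_{t(1+\delta)}$) cannot be obtained from a net: between net points the reachable set could bulge out, and Theorem~\ref{limit_shape}-style statements applied pointwise give only convergence in probability when one sups over $|x_0|\le Rt$ (this is \eqref{est2}). Upgrading to a.s.\ convergence uniformly over $|x_0|\le Rt$ is the substance of Lemma~\ref{Lemma_3}: one works with the support functions $H_t(x_0,t_0,p,\cdot)$, proves a sub-additivity relation \eqref{hsubadd} that is weaker than classical, couples it with a chaining inequality \eqref{2qRt} and a product bound on probabilities from the finite-range-of-dependence assumption, and then runs Borel--Cantelli along a geometric progression $t=q^m t_0$ with a separate Lipschitz interpolation to fill in all times. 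Your proposal names "Borel--Cantelli along a sparse sequence," which is in the right spirit, but gives no indication of the sub-additive/support-function machinery that makes the geometric-progression argument close. As written, the proposal would establish \eqref{est1}-type containment but not \eqref{est3}, and therefore would not complete the proof.
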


\begin{rmk}
Theorems \ref{limit_shape} and~\ref{Gthm} are also true if we 
request $V_t$ to be merely integer stationary. 
This means that~\eqref{stat}-\eqref{stat2} holds for $y =(x, t) \in \mathbb{Z}^{\size+1}$ only.
Here is an example
of an integer stationary and finite range dependent flow $V_t(x,\event)$ that satisfies the conditions of Theorem~\ref{limit_shape}. 
Take any two deterministic incompressible
vector fields $V^1_t(x)$ and $V^2_t(x)$ with compact support in $\R^{\size+1}$. 
The incompressibility and compact support imply that 
\begin{equation}\label{dos_dos}
 \int_{\R^{\size}} V_t^i(x) \, dx =0, \qquad i=1,2, 
\end{equation}
for every $t$.
Consider a family of Bernoulli trials, that is 
$\zeta_{j k}(\event)$, $j \in \Z^{\size}$, $k \in \Z$ are independent identically distributed random variables such that $\zeta_{j k}= 1$ or   $\zeta_{j k}= 0$ with probability $1/2$.  Set
\[
V_t(x,\event) 
= \sum_{j \in  \Z^{\size}, k \in \Z}\left( \zeta_{jk}(\event) V^1_{t+k}(x+j) + (1- \zeta_{jk}(\event)) V^2_{t+k}(x+j) \right).
\]
The identity \eqref{dos_dos} implies that this random field satisfies \eqref{uniform_Delta} with $\Delta=0$.
\end{rmk}

\begin{rmk} Using Theorem~\ref{newmain} and Corollary \ref{new_cor} we can prove 
the conclusions of Theorems \ref{limit_shape} and~\ref{Gthm} if, instead of  finite range dependence and stationarity,
we impose other assumptions on  $V_t$. We are aware of two approaches. 
\begin{itemize}
\item If $V_t$ is periodic in $x$ and  random, statistically stationary and ergodic with 
respect to $t$, then the homogenization limit can be proven by an argument given in~\cite{JST}. 

\item If  $V_t$ is periodic in $t$ and  
random, statistically stationary and ergodic with 
respect to $x$, then the homogenization limit can be proven by an argument given in~\cite{NN}. 
\end{itemize}
Note that the level-set equation (\ref{Geqn}) is used 
as a model for turbulent combustion in the regime of thin flames \cite{Wlms85, Pet}.
In this model, the level sets of $u^\ep$ represent the flame surface, and 
$V_t$ is the velocity of the underlying fluid (assumed to be independent of $u^\ep$).
Spatial or temporal periodicity is rarely 
observed in unsteady turbulent flows. Thus, in the context of unsteady turbulent flows 
it is more relevant to assume the velocities are time-space stationary and have finite range of time dependence. 
\end{rmk}

We prove Theorems \ref{limit_shape} and~\ref{Gthm}  for a time-space stationary random vector field. 
Generalization to the  integer stationary case is straightforward.
We denote by $\RR_t(x_0, t_0, \event)$ the reachable set from $(x_0,t_0)$ at time $t_0+t$
of the flow $V_t(x,\event)$.
Note that $\RR_t(\event)=\RR_t(0, 0, \event)$.

Observe that
\be\label{Mt-ball}
  \RR_{t}(x_0,t_0,\event)  \subset B_{M t}(x_0) 
 \qquad
 \forall t>0, \  x_0\in\R^\size, \ t_0\in\R, \ \event\in\Events.
\ee
Define $\Lambda=\frac 2{1-\Delta}$.
Corollary \ref{new_cor} implies that there is a positive integer  $\tau_0 \in \mathbb{N}$  such that
\be\label{t/2-ball}
 B_{t/\Lambda}(x_0) \subset \RR_{t}(x_0,t_0,\event) 
 \qquad
 \forall t\ge \tau_0-1, \  x_0\in\R^\size, \ t_0\in\R, \ \event\in\Events .
\ee
Here we use \eqref{uniform_M}, \eqref{uniform_Delta} and Remark \ref{newcor_uniform}
to ensure that $\tau_0$ is independent of~$\event$.
We assume that $\tau_{0}>\Range$ where $\Range$ is the range of time dependence from \eqref{finite_range}.

The relation \eqref{t/2-ball} implies that $x_0\in \RR_{t}(x_0,t_0,\event)$ for all $t\ge\tau_0-1$.
Therefore
\be\label{Rt-monotone}
 \RR_{t_1}(x_0,t_0,\event) \subset \RR_{t_1+t}(x_0,t_0,\event)
 \qquad \forall t\ge\tau_0-1, \ t_1\ge 0, \  x_0\in\R^\size, \ t_0\in\R, \ \event\in\Events .
\ee

For $x_0,v \in \R^\size$, $t_0\in\R$ and $\event\in\Events$, define the travel-time
\be\label{def_tt}
\tau( x_0, t_0,v,  \event)= \inf \{ t \in \N : x_0 + v \in \RR_{t}(x_0 ,t_0,\event)  \} +\tau_0.
\ee
Set $\tau(v, \event) = \tau( 0, 0, v, \event)$. 
Note that for any $N\in\N$ the event $\{\event\in\Omega:\tau( x_0, t_0,v,  \event)=N\}$
is determined by the restriction of $V_t$ to the time interval $[t_0,t_0+N-\tau_0]$.

By \eqref{Mt-ball} and \eqref{t/2-ball},
the random variable $\tau(v, \event)$ grows linearly in $v$
and moreover
\be\label{m_est}
 \frac{|v|}{M}  \leq \tau(x_0, t_0, v, \event)   \leq \Lambda |v| + 2\tau_0
\ee
for all $x_0$, $t_0$, $v$, $\event$.
This estimate is the main ingredient of the first steps of the proof.
We also need a number of technical estimates.
By \eqref{Rt-monotone} we have
\be\label{reach_after_tau}
x_0 +v \in \RR_{t}(x_0, t_0)
\qquad \forall t \geq \tau(x_0, t_0, v, \event)-1
\ee
and
\be\label{no_reach_before_tau}
\tau(x_0, t_0, v, \event) \le t_1+2\tau_0 \qquad \text{if } x_0+v\in \RR_{t_1}(x_0, t_0) .
\ee
For any $x_0, x_1,v_0,v_1\in\R^n$ and $t_0\in\R$ we have
\be\label{quadrangle}
 \tau(x_0,t_0,v_0,\event) \le \tau(x_1, t_0+T , v_1 , \event) + 2T
\qquad \forall T\ge \Lambda|x_1-x_0|+\Lambda|v_1-v_0|+\tau_0 .
\ee
Indeed, $(x_1,t_0+T)$ is reachable from $(x_0,t_0)$ by \eqref{t/2-ball}.
Then the point $x_1+v_1$ is reachable from $(x_1,t_0+T)$
at time $t_1=t_0+T+\tau(x_1, t_0+T , v_1 , \event)-\tau_0$.
Then, by \eqref{t/2-ball}, $x_0+v_0$ is reachable from $(x_1+v_1,t_1)$ at any time
$t_2 \ge t_1+ T-1$.
Choosing $t_2$ such that $t_2-t_0$ is an integer and $t_2\le t_1+T$ yields \eqref{quadrangle}.

Our preliminary goal is to obtain the asymptotic shape of the reachable set. 
  This is analogous to  ``shape theorems" for the first-passage time in percolation theory, and we proceed with similar arguments. 
 
 \begin{lemma}\label{lower_}
 There exists a positively $1$-homogeneous convex function
 \[
 \overline{T}: \R^\size \to \R^+
 \]
satisfying
\be\label{estimate_}
 \frac{|v|}{M}  \leq \overline{T}(v)   \leq  \frac{|v|}{1-\Delta}
 \ee
for all $v\in\R^n$ and such that the following holds: 
\begin{itemize}
\item[i.] For any $v \in \R^\size$, $x_0 \in \R^\size$, $t_0 \in \R$, 
\[
\limsup_{\lambda \to \infty} \frac{1}{\lambda} \tau(\lambda x_0, \lambda t_0, \lambda v, \event) =\overline{T}(v) \hbox{ almost surely} .
\]

\item[ii.] For any $v \in \R^\size$, $x_0 \in \R^\size$, $t_0 \in \R$, 
\[
 \frac{1}{\lambda} \tau(\lambda x_0, \lambda t_0, \lambda v, \event)    \to \overline{T}(v)
\]
in probability as $\lambda \to \infty$, that is
\be\label{zlimp}
  \lim_{ \lambda \to\infty} \Pm \left\{ \event : \left|\frac{1}{\lambda}  \tau(\lambda x_0, \lambda t_0, \lambda v, \event)   - \overline{T}(v)\right| \ge \ep \right\} = 0
\ee
for every $\ep>0$.
\end{itemize}
\end{lemma}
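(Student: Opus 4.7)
The plan is to follow a shape-theorem strategy adapted to the time-dependent setting. I would fix $v\in\R^\size$, establish almost sure convergence of $\tau_n(\omega):=\tau(0,0,nv,\omega)/n$ along integer $n$ via a subadditive ergodic argument, then extend to real $\lambda$ and general $(x_0,t_0)$ using the Lipschitz-type bounds \eqref{quadrangle} and \eqref{m_est}.

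The key observation is that $\tau_m-\tau_0$ is a stopping time for the filtration $\{\mathcal G_{t^-}\}$, since $\{\tau_m-\tau_0\le t\}=\{mv\in\RR_t(0,0,\omega)\}\in\mathcal G_{t^-}$. Concatenating an optimal path from $(0,0)$ to $(mv,\tau_m-\tau_0)$, an $\Range$-time idle at $mv$ (possible by \eqref{Rt-monotone}), and a path from $(mv,\tau_m-\tau_0+\Range)$ to $(m+n)v$, I obtain
\[
 \tau_{m+n}(\omega)\le\tau_m(\omega)+\Range+\tau\bigl(mv,\tau_m+\Range-\tau_0,nv,\omega\bigr).
\]
By finite range of time dependence~\eqref{finite_range}, the last term is independent of $\mathcal G_{\tau_m^-}$; by space-time stationarity~\eqref{stat2} it is distributed as $\tau_n$. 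Taking expectations gives $E\tau_{m+n}\le E\tau_m+E\tau_n+\Range$, and Fekete's near-subadditive lemma then produces a deterministic limit $\bar T(v):=\lim_n E\tau_n/n$; the two-sided bound $|v|/M\le\bar T(v)\le|v|/(1-\Delta)$ is read off from \eqref{m_est}.

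To upgrade this to almost sure convergence I would apply Kingman's subadditive ergodic theorem to the array $X_{m,n}(\omega):=\tau(mv,mC_0,(n-m)v,\omega)$ with $C_0>\Lambda|v|+2\tau_0+\Range$ fixed. Space-time stationarity makes the joint distribution of $(X_{m,n})$ invariant under the shift $m\mapsto m+1$, while finite range dependence makes the block shifts sufficiently mixing. Since $X_{0,m}\le mC_0$ by~\eqref{m_est}, concatenation (reach $kv$ from $(mv,mC_0)$ along an $X_{m,k}$-optimal path; idle at $kv$ until time $kC_0$ via~\eqref{Rt-monotone}; then follow an $X_{k,n}$-optimal path) yields $X_{m,n}\le (k-m)C_0+X_{k,n}$. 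Together with the $L^1$-limit above and the stationarity, this is enough to run the standard Kingman argument and deduce $X_{0,n}/n\to\bar T(v)$ a.s.; since $\tau_n=X_{0,n}$, almost sure convergence along integer $n$ follows.

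From integers the extension to real $\lambda\to\infty$ uses $|\tau_{n+1}-\tau_n|\le\Lambda|v|+2\tau_0$ (a consequence of \eqref{m_est} and~\eqref{Rt-monotone}), which controls $\tau(\lambda v,\omega)/\lambda-\tau_{\lfloor\lambda\rfloor}/\lfloor\lambda\rfloor$ by $O(1/\lambda)$. The dependence on $(x_0,t_0)$ is absorbed via~\eqref{quadrangle}: re-routing through $(\lambda x_0,\lambda t_0)$ changes the travel time by at most a term linear in $|x_0|+|t_0|$, which vanishes after division by $\lambda$. Convergence in probability~\eqref{zlimp} is an immediate consequence of almost sure convergence. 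Positive 1-homogeneity of $\bar T$ is built into the definition; subadditivity $\bar T(u+v)\le\bar T(u)+\bar T(v)$ follows from concatenating approximate minimizers for $u$ and $v$ with an $\Range$-buffer and passing to the limit, and subadditivity together with positive 1-homogeneity force convexity of $\bar T$. The principal technical obstacle will be cleanly deploying Kingman's theorem despite the random start time $\tau_m$; finite range of time dependence combined with the idling mechanism from \eqref{Rt-monotone} is precisely what allows the decoupling needed to make this work.
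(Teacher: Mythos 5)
There is a genuine gap in your Kingman setup. You define $X_{m,n}(\omega)=\tau(mv,mC_0,(n-m)v,\omega)$ and claim that the concatenation argument "is enough to run the standard Kingman argument." But the concatenation you describe gives $X_{m,n}\le (k-m)C_0+X_{k,n}$, \emph{not} the Kingman subadditivity $X_{m,n}\le X_{m,k}+X_{k,n}$. The reason is structural: the first leg delivers you to $kv$ at the random time $mC_0+X_{m,k}-\tau_0$, which is strictly earlier than the deterministic starting time $kC_0$ of the second leg, and the forced idling consumes the slack $(k-m)C_0-X_{m,k}$. Since $X_{m,k}\le(k-m)C_0$ (that is how $C_0$ was chosen), the bound you actually get, $(k-m)C_0+X_{k,n}$, dominates $X_{m,k}+X_{k,n}$ and hence provides no subadditivity for Kingman. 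This is precisely the obstruction that makes time-dependent first-passage problems hard: one cannot re-anchor a subadditive array to deterministic starting times without destroying the subadditivity. Consequently the claimed almost-sure \emph{convergence} is not established, and thus the derivation of \eqref{zlimp} ("immediate consequence of almost sure convergence") is also unsupported.

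This is also where your proof diverges from the paper in an essential way. Lemma \ref{lower_} only asserts $\limsup_\lambda \frac1\lambda\tau(\lambda x_0,\lambda t_0,\lambda v,\event)=\overline T(v)$ a.s.\ plus convergence in probability — \emph{not} almost-sure convergence of the full limit. The paper proves this by a triangular-array argument that sidesteps Kingman entirely: for each fixed $k$, the variables $\xi_{k,m}=\tau(x_k+(m-1)v,\,t_{k,m},\,v,\cdot)$ with the recursively defined random start times $t_{k,m}$ are (by stationarity and finite range) i.i.d.\ and bounded, so the strong law for triangular arrays gives $\frac1k\sum_m\xi_{k,m}\to\E\tau(v,\cdot)$ a.s., which, with the subadditivity \eqref{super2}, yields only a one-sided bound $\limsup\le\overline T(v)$ a.s.; equality of the limsup and convergence in probability then follow from stationarity, Fekete, and Fatou, not from a two-sided a.s.\ limit. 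Finally, a smaller point: the upper bound in \eqref{estimate_} cannot be "read off from \eqref{m_est}" — that estimate only gives $\overline T(v)\le\Lambda|v|=\frac{2|v|}{1-\Delta}$; to get the sharp constant $\frac{1}{1-\Delta}$ you need, as the paper does, to feed Corollary \ref{new_cor} with an arbitrary $a>\frac1{1-\Delta}$ and let $a\downarrow\frac1{1-\Delta}$ in the limit defining $\overline T$.
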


\begin{proof}
Fix $x_0,v_1,v_2\in\R^n$, $t_0\in\R$, and define $\tau_1(\event)=\tau(x_0, t_0, v_1, \event)$.
By \eqref{reach_after_tau} and the definition of $\tau$ we have the following sub-additivity relation: 
\be\label{super2}
\tau(x_0, t_0, v_1+v_2, \event) \leq  \tau_1(\event) + \tau(x_0 +v_1, t_0+ \tau_1(\event), v_2, \event). 
\ee
The two terms in the right-hand side of \eqref{super2}
are independent random variables and
they have the same distributions as $\tau(v_1, \cdot)$ and $\tau(v_2, \cdot)$,
respectively. 
To show this, fix any $N_1, N_2 \in \N$
and consider events 
$$
A_{N_1}=\{ \event :  \tau_1(\event) =N_1 \},
$$
and
$$
B_{N_1, N_2}=\{\event : \tau(x_0 +v_1, t_0+ N_1, v_2, \event) =  N_2 \} .
$$
Due to the space-time stationarity, their probabilities are equal to those of
$\{ \tau(v_1, \cdot) =  N_1 \}$ and $\{ \tau(v_2, \cdot) =N_2 \}$, respectively.
The event $A_{N_1}$ is determined by $V_t(x ,\event)$ for $t \leq t_0+N_1-\tau_0$
and $B_{N_1, N_2}$  is determined by $V_t(x ,\event)$ for $t \geq t_0+N_1$.
Since $\tau_0 > \aleph$, the finite range of time dependence implies that $A_{N_2, N_1}$ and  $B_{N_2}$ are independent.
Thus
\begin{multline}\label{tau-indep}
\mathbb{P} (\{\event :  \tau_1(\event)=N_1  \text{ \bf and } \tau(x_0 +v_1, t_0+ \tau_1(\event), v_2, \event) =  N_2\} )
= \mathbb{P} (A_{N_1} \cap B_{N_1,N_2} ) 
\\
=\mathbb{P} (A_{N_1}) \,  \mathbb{P}(B_{N_1,N_2} ) 
= \mathbb{P} (\{ \tau(v_1, \cdot) =  N_1\}) \, \mathbb{P} (\{ \tau(v_2, \cdot) =N_2 \}) .
\end{multline}
Summing over either $N_2$ or $N_1$ we obtain that $\tau_1(\event)$ and $\tau(x_0 +v_1, t_0+ \tau_1(\event), v_2, \event)$
have the same distributions as $\tau(v_1, \cdot)$ and $\tau(v_2, \cdot)$, respectively;
furthermore \eqref{tau-indep} shows that they are independent.

Therefore, from  \eqref{super2} we have
\be\label{sub_add}
 \E(\tau(v_1+v_2,\cdot)) \le   \E(\tau(v_1,\cdot)) + \E(\tau(v_2,\cdot)).
\ee
This implies that there exists a limit
\be \label{eta}
\overline{T}(v) := \lim_{\lambda\to\infty} \frac{\E(\tau( \lambda v,\cdot))}{\lambda} 
 = \inf_{\lambda>0} \frac{\E(\tau( \lambda v,\cdot))}{\lambda}.
\ee
The function $\overline{T}$ is 1-homogeneous by definition.
By \eqref{sub_add},  $\overline{T}$ is sub-additive and hence convex.
The inequality \eqref{m_est} implies that $|v|/M\le \overline{T}(v)\le \Lambda|v|$.
Moreover, by Corollary \ref{new_cor} for every $a>\frac 1{1-\Delta}$
there is a constant $C>0$ such that $\tau(v,\event)\le a|v|+C$
for all $v\in\R^n$ and $\event\in\Events$.
Hence $\overline T(v)\le a|v|$ for all $a>\frac 1{1-\Delta}$
and \eqref{estimate_} follows.

Fix $v\in\R^n$ and arbitrary sequences $\{x_k\}\subset\R^n$ and $\{t_k\}\subset\R$, $k\in\N$.
For each $k$, 
define finite sequences $\xi_{k, m}$ and $t_{k,m}$, $1 \leq m \leq k$,
of random variables by induction as follows:
$$
 \xi_{k, m}(\event) = \tau(x_k +(m-1) v, t_{k, m}(\event), v, \event),
$$
where
$$
 t_{k, m}(\event) = t_k+\sum_{i=1}^{m-1} \xi_{k, i}(\event),
$$
in particular $t_{k,1}(\event)=t_k$.
Note that for any $N\in\N$ the event $\{\event:t_{k, m}(\event)=t_k+N\}$ is determined by the values $V_t(x,\omega)$
for $t\in[t_k, t_k+N-\tau_{0}]$ only.
As in the above discussion of the terms in \eqref{super2}, one sees that
for each fixed $k$ the random variables $\xi_{k,m}$, $1\le m \leq k$,
are independent  and  have the same distribution as $\tau(v,\cdot)$.
Since  $\xi_{k, m}$ are uniformly bounded (see \eqref{m_est}),
the strong law of the large numbers for triangular arrays applies to them, and we obtain that
\be\label{sll2}
\lim_{k \to \infty} \frac{1}{k} \sum_{m=1}^k \xi_{k, m}(\event) 
=  \E(\tau(v, \cdot)) 
, \hbox{ almost surely. }
\ee
As  in~\eqref{super2} we have sub-additivity
$$
  \tau(x_k, t_k, k v, \omega) \le  \sum_{m=1}^k \xi_{k, m}(\event)
$$
for all $k\in\N$ and $\event\in\Events$.
This and \eqref{sll2} imply that
\be\label{sll3}
 \limsup_{k \to \infty} \frac{\tau(x_k, t_k, k v, \event)}{k} \le  \E(\tau(v, \cdot))
, \hbox{ almost surely}.
\ee

Now we prove the two main assertions of the lemma.
Fix $x_0,v\in\R^n$, $t_0\in\R$, and $\ep>0$.
By \eqref{eta} there exists $\lambda_0 >0$ such that
$$
 \overline{T}(v) \le \frac{\E(\tau(\lambda_0 v ,\cdot))}{\lambda_0} \le (1+\ep)\overline{T}(v) .
$$
For $\lambda\ge\lambda_0$ let $k\in\N$ be such that
$k\lambda_0\le \lambda<(k+1)\lambda_0$. 
We apply~\eqref{quadrangle} 
to $\lambda x_0$, $k\lambda_0 x_0$, $\lambda v$, $k\lambda_0v$, $\lambda t_0$
in place of $x_0$, $x_1$, $v_0$, $v_1$, $t_0$, respectively,
with $T=T_0+ (k \lambda_0 -\lambda) t_0$ where
$T_0=\Lambda \lambda_0 |x_0| + \Lambda \lambda_0 |v|+ \lambda_0 |t_0|+\tau_0$.
This implies that
$$
 \tau(\lambda x_0, \lambda t_0, \lambda v, \event) 
 \le  \tau(k \lambda_0 x_0, k \lambda_0 t_0 +  T_0 , k \lambda_0 v,\event)  + 2 T_0 + 2 \lambda_0 |t_0|
$$
where the last term comes from the estimate $|k\lambda_0-\lambda|\le \lambda_0$.
Therefore
$$
 \limsup_{\lambda \to\infty} \frac{\tau( \lambda x_0, \lambda t_0, \lambda v,\event)}{\lambda} 
 \le  \limsup_{k\to\infty} \frac{\tau( k\lambda_0 x_0, k\lambda_0 t_0 + T_0, k\lambda_0 v,\event)}{k\lambda_0} .
$$
By \eqref{sll3} applied to $x_k=k\lambda_0 x_0$, $t_k=k\lambda_0 t_0+T_0$, and $\lambda_0v$ in place of $v$,
the right-hand side is bounded by
$\E(\tau(\lambda_0v,\cdot))/\lambda_0$ almost surely.
Thus
$$
 \limsup_{\lambda \to\infty} \frac{\tau( \lambda x_0, \lambda t_0, \lambda v,\event)}{\lambda} 
 \le \frac{\E(\tau(\lambda_0v,\cdot))}{\lambda_0}
  \le (1+\ep)\overline{T}(v), 
  \quad\text{almost surely}.
$$
Since $\ep$ is arbitrary, it follows that
\be\label{limsup_tau_above}
\limsup_{ \lambda \to\infty} \frac{\tau(\lambda x_0, \lambda t_0, \lambda v, \event)}{\lambda} \le \overline{T}(v),
\quad\text{almost surely}.
\ee
By the space-time stationarity and \eqref{eta},
\be\label{E_tau_below}
 \E\left(\frac{\tau(\lambda x_0, \lambda t_0, \lambda v,\cdot)}{\lambda}\right) 
 = \E\left(\frac{\tau(\lambda v,\cdot)}{\lambda}\right) 
 \ge \overline{T}(v) .
\ee
Since $\tau(\lambda x_0, \lambda t_0, \lambda v,\cdot)/\lambda$
is bounded above by $\Lambda|v|+\tau_0$ for all $\lambda\ge 1$,
\eqref{limsup_tau_above},~\eqref{E_tau_below} and Fatou's lemma imply that
$$
 \limsup_{\lambda \to\infty} \frac{\tau(\lambda x_0, \lambda t_0, \lambda v,\event)}{\lambda} 
 = \overline{T}(v), \quad\text{almost surely},
$$
and $\tau(\lambda x_0, \lambda t_0, \lambda v,\cdot)/\lambda$ converges to $\overline{T}(v)$ in probability. 
\end{proof}

\begin{defin}
Let $\overline T$ be the function constructed in Lemma \ref{lower_}.
Define the {\em effective reachable set}
\[
W_t =  \left \{v \in \R^\size :  \overline{T}(v) \leq t \right \}.
\]
\end{defin}

Note that $W_t=t\cdot W_1$ and $W_1$ is a convex body satisfying $B_{1-\Delta}(0)\subset W_1\subset B_M(0)$.
We are going to show that the reachable set $\RR_{t}(x_0,t_0,\omega)$ 
for large $t$ is close to the set $x_0+W_t$ in a certain sense.
We introduce the following quantity measuring the difference between these sets.

\begin{defin}
For $x_0\in\R^n$, $t_0\in\R$, $t\ge\tau_0$ and $\event\in\Events$ define
$$
\begin{aligned}
 \rho^+(x_0,t_0,t,\event) &= \inf\{ \ep>0 : \RR_{t}(x_0,t_0,\event)\subset x_0+(1+\ep)W_t \} , \\
 \rho^-(x_0,t_0,t,\event) &= \inf\{ \ep>0 : x_0+(1+\ep)^{-1}W_t \subset \RR_{t}(x_0,t_0,\event) \}
\end{aligned}
$$
and
$$
 \rho(x_0,t_0,t, \event) = \max\{  \rho^+(x_0,t_0,t,\event) , \rho^-(x_0,t_0,t,\event) \} .
$$
\end{defin}

Note that the statement of Theorem \ref{limit_shape} is equivalent
to the property that
$$
\lim_{t\to\infty}\rho(0,0,t,\event)=0, \quad\text{almost surely}.
$$



\begin{lemma}\label{limsup-prob} For any fixed $R>0$,
\be\label{est1}
\lim_{t\to\infty}  \sup_{|x_0|\le Rt} 
 \rho^-(x_0, 0, t,\event) = 0 \hbox{ almost surely}
\ee
and
\be\label{est2}
\lim_{t \to \infty} \sup_{|x_0| \leq Rt} 
\rho^+(x_0,0,t,\event) = 0 \hbox{ in probability,}
\ee
that is for any $\ep>0$,
\be\label{est2a}
 \Pm \left\{ \event : \forall x_0 \in B_{Rt}(0),  \RR_{t} (x_0, 0, \event) \subset  x_0 +(1+\ep) W_t \right\} \to 1
 \quad \text{ as } t \to\infty.
\ee
\end{lemma}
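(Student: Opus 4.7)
The overall strategy follows the template of shape theorems in first-passage percolation, adapted to this time-dependent setting. Fix $\ep>0$ and choose a small $\delta=\delta(\ep,\Lambda,n,M)>0$. Take finite $\delta$-nets $\{\tilde x_i\}\subset B_R(0)$, $\{\tilde v_j\}$ of a suitable subset of $B_M(0)$, and a finite $\delta$-net $\{s_k\}$ of time offsets in $[0,3\Lambda\delta]$. The workhorse is the quadrangle inequality \eqref{quadrangle}, which lets us compare $\tau(x_0,0,v,\event)$ to $\tau(t\tilde x_i,ts_k,t\tilde v_j,\event)$ whenever the grid point $(\tilde x_i,\tilde v_j)$ approximates $(x_0/t,v/t)$ to within $\delta$, modulo an additive error of size $O(\delta t)$.

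For \eqref{est1}, take $\{\tilde v_j\}$ to be a $\delta$-net of $W_1$. By Lemma \ref{lower_}(i) applied to each triple $(\tilde x_i,s_k,\tilde v_j)$, and a finite intersection of almost-sure events, I obtain a full-measure set $\Omega^*$ on which for all sufficiently large $t$ and all indices, $\tau(t\tilde x_i,ts_k,t\tilde v_j,\event)\le (1+\delta)\,t\,\overline T(\tilde v_j)$. For arbitrary $x_0=t\tilde x_0$ with $|\tilde x_0|\le R$ and $v=t\tilde v$ with $\overline T(\tilde v)\le 1/(1+\ep)$, I pick grid indices approximating $(\tilde x_0,\tilde v,T/t)$, where $T=\lceil 2\Lambda\delta t\rceil+\tau_0$ is the quadrangle offset. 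The quadrangle then yields $\tau(x_0,0,v,\event)\le(1+\delta)t\,\overline T(\tilde v_j)+2T\le t+1$ for $\delta$ sufficiently small in terms of $\ep$. By \eqref{reach_after_tau}, $x_0+v\in\RR_t(x_0,0,\event)$, proving $\rho^-(x_0,0,t,\event)\le\ep$ uniformly in $|x_0|\le Rt$ on $\Omega^*$.

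For \eqref{est2}, take $\{\tilde v_j\}$ to be a $\delta$-net of $\{\tilde v\in B_M(0):\overline T(\tilde v)\ge 1+\ep/2\}$. The bad event $\sup_{|x_0|\le Rt}\rho^+(x_0,0,t,\event)>\ep$ supplies $x_0\in B_{Rt}(0)$ and $v\in B_{Mt}(0)$ with $\overline T(v)>(1+\ep)t$ and $x_0+v\in\RR_t(x_0,0,\event)$. Applying the thick-ball inclusion \eqref{t/2-ball} at the endpoint $v$ (to replace $v$ by the nearest $t\tilde v_j$) yields $\tau(x_0,0,t\tilde v_j,\event)\le(1+C\delta)t$ for some constant $C=C(\Lambda)$ and some $j$ with $\overline T(\tilde v_j)\ge 1+\ep/2$. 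Combining with the quadrangle and the thick-ball bridge on the starting side, the bad event is contained in a finite union of events of the form $\{\tau(t\tilde x_i,ts_k,t\tilde v_j,\event)\le(1+C'\delta)t\}$; crucially, the number of such events is at most $(R/\delta)^n$ times the finitely many $(j,k)$ choices and is therefore independent of $t$. By the space-time stationarity of $V_t$, each such event has the same probability as $\{\tau(t\tilde v_j,\event)\le(1+C'\delta)t\}$, which tends to zero by Lemma \ref{lower_}(ii) since $\overline T(\tilde v_j)\ge 1+\ep/2>1+C'\delta$ for $\delta$ small. A union bound completes the proof.

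The main obstacle is the discrete-to-continuous transfer in part (ii), since the quadrangle inequality naturally yields upper bounds on $\tau$ while ruling out the bad event ultimately requires lower bounds on $\tau(x_0,0,v,\event)$. This is handled by first converting, via the thick-ball argument applied to the endpoint $v$, to an upper bound on $\tau(x_0,0,t\tilde v_j,\event)$ for a grid direction $\tilde v_j$; after that, the quadrangle inequality and space-time stationarity propagate the event to the discretized quantity $\tau(t\tilde x_i,ts_k,t\tilde v_j,\event)$ whose probability is controlled by Lemma \ref{lower_}(ii).
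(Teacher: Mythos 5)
Your proposal follows essentially the same route as the paper: finite nets over the base point and direction, the quadrangle inequality \eqref{quadrangle} to reduce arbitrary $(x_0,v)$ to a fixed grid of triples, then Lemma~\ref{lower_}(i) with an intersection of a.s.\ events for \eqref{est1} and Lemma~\ref{lower_}(ii) with a union bound for \eqref{est2}. The only cosmetic differences are that the paper folds the endpoint discretization for \eqref{est2} directly into the quadrangle (so your separate thick-ball step is unnecessary, as is the time-offset net, since the offset can be taken exactly proportional to $t$), and the grid starting time in \eqref{est2} must be negative — but none of this changes the substance.
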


\begin{proof}
To prove \eqref{est1}, fix $R>0$ and $\ep>0$ and choose $\ep$-nets  $\{y_i\}_{i=1}^N$ in the ball $B_{R}(0)$
and $\{v_j\}_{j=1}^K$ in the effective 1-reachable set $W_1$.
For every $x_0\in B_{Rt}(0)$ and $v\in W_t$ there exist $i$ and $j$ such that
$|x_0-ty_i|<t\ep$ and $|v-tv_j|<t\ep$.
Assuming that $t\ge\ep^{-1}\tau_0\ge 2\Lambda^{-1}\ep^{-1}\tau_0$, we see from \eqref{quadrangle} that 
$$
 \tau(x_0,0,v,\event) \le \tau(ty_i,3\Lambda t\ep,tv_i,\event) + 6\Lambda t\ep
$$
for all $\event\in\Events$. Hence
\[
\sup_{|x_0| \leq Rt, v\in W_t} \tau(x_0, 0, v, \event) 
\leq \max_{i,j}  \tau( ty_i,  3\Lambda t\ep, tv_j, \event)+6\Lambda t\ep
\]
for all $t\ge\ep^{-1}\tau_0$ and $\event\in\Events$.
By Lemma~\ref{lower_}  (part 1)
\[
\limsup_{t\to\infty} \max_{i,j}  \frac{1}{t} \tau( ty_i, 3\Lambda t\ep , tv_j, \event) = \max_j\overline{T}(v_j)\le 1,
\quad\text{almost surely}.
\]
Thus
$$
\limsup_{t\to\infty} \sup_{|x_0| \leq Rt, v\in W_t} \frac1t \tau(x_0, 0, v, \event) 
 \le 1+6\Lambda\ep, \quad\text{almost surely}.
$$
By~\eqref{reach_after_tau} this implies that for every $\delta>0$ there is 
$s=s(\delta,\event)>0$ such that
$$
x_0 +  v  \in \RR_{t(1+ 6 \Lambda \ep +\delta)}(x_0, t_0)
$$
for all  $t \geq s$, $v \in  W_t$ and $|x_0| \leq Rt$.
Setting $\delta=\Lambda\ep$ we obtain that
$$
 \rho^-(x_0,0,t(1+ 7 \Lambda \ep),\event) \le 7 \Lambda \ep
$$
for all  $t \geq s=s(\Lambda\ep,\event)$ and $|x_0| \leq Rt$.
Therefore
$$
 \limsup_{t\to\infty} \sup_{|x_0|\le R't} \rho^-(x_0,0,t,\event) \le 7\Lambda\ep,
\quad\text{almost surely}
$$
where $R'=(1+ 7 \Lambda \ep)^{-1}R$.
Since $R$ and $\ep$ are arbitrary, \eqref{est1} follows.
To prove~\eqref{est2}, fix $R>0$ and $\ep>0$ and define
\[
 \Events_1(t)= \left\{ \event : \exists x_0 \in B_{Rt}(0), \rho^+(x_0,0,t,\event)>\ep \right\}.
\]
Let $\delta=\ep/32\Lambda$ and choose $\delta$-nets  $\{y_i\}_{i=1}^N$ in $B_{R}(0)$
and $\{v_j\}_{j=1}^K$ in $B_M(0)$.
Consider $\event\in\Events_1(t)$ where $t\ge\delta^{-1}\tau_0$.
By the definition of $\Events_1(t)$ there exist $x_0\in B_{Rt}(0)$ 
and $v\in\RR_t(x_0,0,\event)-x_0$
such that $v\notin(1+\ep)W_t$.
By \eqref{Mt-ball} we have $v\in B_{Mt}(0)$, hence
there exist $i$ and $j$ such that
$|x_0-ty_i|<\delta t$ and $|v-tv_j|<\delta t$.
These inequalities, \eqref{quadrangle}, and \eqref{no_reach_before_tau} imply that
\be\label{tau_ij}
 \tau(ty_i,-3\Lambda \delta t,tv_j,\event) 
 \le \tau(x_0,0,v,\event) + 6\Lambda \delta t\le t + 2\tau_0 + 6\Lambda \delta t\le (1+\ep/4) t .
\ee
Since $v\notin(1+\ep)W_t$, we have $\overline T(t^{-1}v) \ge 1+\ep$.
On the other hand,
$$
\overline T(t^{-1}v) \le \overline T(v_j) + \overline T(t^{-1}v-v_j)
\le \overline T(v_j) + \Lambda |t^{-1}v-v_j| 
\le \overline T(v_j) + \Lambda \delta 
\le \overline T(v_j) + \ep/4  
$$
by the sub-additivity of $\overline T$ and \eqref{estimate_}.
Therefore $\overline T(v_j)\ge 1+\ep/2$.
Hence, by \eqref{tau_ij},
$$
 \frac1t \tau(ty_i,-3\Lambda \delta t,tv_j,\event) \le 1 + \ep/4 \le \overline T(v_j) - \ep/4.
$$
Thus
$$
 \Pm(\Events_1(t)) \le \sum_{i,j} \Pm \left\{\event: \frac1t \tau(ty_i,-3\Lambda\delta t,tv_j,\event) \le \overline T(v_j) - \ep/4 \right\} 
$$
for all $t\ge \delta^{-1}\tau_0$.
By Lemma~\ref{lower_} (part 2), each summand in the right-hand side goes to 0 as $t\to\infty$.
Hence $\Pm(\Events_1(t))\to 0$ as $t\to\infty$ and \eqref{est2} follows.
\end{proof}

\begin{defin} Define the support function of $W_1$ (a.k.a. the effective Hamiltonian)
\[
\bar H(p) = \sup \left \{ p \cdot y \;|\;\; y \in W_1 \right \}.
\]
\end{defin}
Since $\bar H(p)$ is the supremum of a family of linear functions of $p$,
it is immediate that $\bar H$ is convex in $p$, and positively homogeneous of degree one.
Since $B_{1-\Delta}(0)\subset W_1 \subset B_M(t)$,
we have $(1-\Delta)|p|\le \bar H(p) \le M|p|$.
Similarly, we define the support functions of reachable sets.

\begin{defin}
For $p \in \R^{\size}$, $x_0\in\R^n$, $t_0\in\R$ and $\event\in\Events$ define
$$
 H_t(x_0,t_0,p,\event) = \sup \{  p \cdot (y-x_0)  \mid y\in\RR_{t}(x_0,t_0,\event) \} 
$$
and
$$
H_t(p, \event) =H_t(0, 0, p, \event).
$$
\end{defin}

Due to the space-time stationarity,
the random variable $H_t(x_0,t_0,p,\cdot)$ has the same distribution
as $H_t(p,\cdot)$.

\begin{lemma}\label{Lemma_3}
For any $p \in \Rm^{\size}$  and $R>0$, 
\be\label{Lemma_3_statement}
\limsup_{t \to \infty}  \sup_{|x_0|\le Rt} \frac{H_{t}( x_0, 0, p,\event) }{ t } \leq \bar{H}(p), \hbox{ almost surely.}
\ee
\end{lemma}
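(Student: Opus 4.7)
The strategy is to reduce the upper bound on $H_t/t$ to a uniform containment of the reachable set in a slight dilate of the effective set, and then leverage the in-probability estimate from Lemma~\ref{limsup-prob} together with the approximate monotonicity of the reachable set.

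\emph{Reduction to a containment.} Since $\bar H(p)=\sup_{w\in W_1}p\cdot w$ is the support function of $W_1$, the inclusion $\RR_t(x_0,0,\event)\subset x_0+(1+\eta)W_t$ directly yields $H_t(x_0,0,p,\event)\le (1+\eta)\,t\,\bar H(p)$. The desired bound will therefore follow if, for each $\eta>0$, we produce almost surely a $T(\eta,\event)$ such that this inclusion holds for all $t\ge T$ and $|x_0|\le Rt$. By \eqref{est2a} applied with $R':=R(1+\eta)$ in place of $R$, the probability of this inclusion at time $t$ tends to~$1$ as $t\to\infty$---but only in probability, so an extraction step is needed.

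\emph{Subsequence and interpolation.} I would extract a deterministic sequence $\{t_k\}$ with $t_k\to\infty$, $t_{k+1}/t_k\to 1$ and $t_{k+1}-t_k\to\infty$, along which the containment holds almost surely for all $k$ sufficiently large. A diagonal construction suffices: for each $m\in\N$, place a finite block of $t_k$'s inside a time window on which the bad event has probability less than $1/m^2$, with successive ratios at most $1+1/m$; Borel--Cantelli then yields almost-sure validity along the union. On this full-measure event, for $t$ large choose $k$ with $t_k\le t\le t_{k+1}$ and (passing to $t_{k+2}$ if necessary) $t_{k+1}-t\ge\tau_0-1$. The approximate monotonicity \eqref{Rt-monotone} gives
\[
\RR_t(x_0,0,\event)\subset \RR_{t_{k+1}}(x_0,0,\event)\subset x_0+(1+\eta)W_{t_{k+1}}
\]
for every $|x_0|\le Rt\le R't_{k+1}$, so $H_t(x_0,0,p,\event)\le (1+\eta)\,t_{k+1}\,\bar H(p)$ and
\[
\frac{H_t(x_0,0,p,\event)}{t}\le (1+\eta)\,\frac{t_{k+1}}{t}\,\bar H(p)\longrightarrow (1+\eta)\,\bar H(p),
\]
using $t_{k+1}/t\le t_{k+1}/t_k\to 1$. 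A countable intersection over $\eta=1/m$ then gives $\limsup_{t\to\infty}\sup_{|x_0|\le Rt}H_t(x_0,0,p,\event)/t\le\bar H(p)$ almost surely.

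\emph{Main obstacle.} The delicate step is the subsequence extraction: one must simultaneously achieve Borel--Cantelli summability (for almost-sure convergence along $t_k$) and the density condition $t_{k+1}/t_k\to 1$ (needed to preserve the sharp constant in the interpolation), and convergence in probability alone does not provide quantitative rates. A cleaner alternative would be to derive exponential concentration of $H_t$ around its mean from the finite range of time dependence via McDiarmid's bounded differences inequality---since perturbing the random field $V$ on a single time-block of length $\aleph$ shifts trajectories by at most $O(M\aleph)$, the functional $H_t$ is Lipschitz in each block---which would supply Borel--Cantelli summability along integer $t$ directly and circumvent the diagonal argument.
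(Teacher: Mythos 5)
Your opening reduction is correct and matches the paper: $\RR_t(x_0,0,\event)\subset x_0+(1+\eta)W_t$ gives $H_t(x_0,0,p,\event)\le(1+\eta)t\bar H(p)$, so the lemma would follow from an almost-sure, eventually-uniform version of the containment from Lemma~\ref{limsup-prob}. The problem, which you flag yourself, is that Lemma~\ref{limsup-prob} only gives $\rho^+\to 0$ \emph{in probability}, and your subsequence extraction cannot be completed. In general there is no deterministic sequence $t_k$ that simultaneously satisfies $\sum_k\Pm(\text{bad at }t_k)<\infty$ and $t_{k+1}/t_k\to 1$: the first constraint forces $t_k$ to grow at a rate dictated by the unknown speed of convergence in probability, and if that speed is slow (e.g.\ the bad-event probability decays like $1/\log\log t$) the required $t_k$ grow doubly exponentially and the ratio condition fails. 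So the ``diagonal construction suffices'' step is a genuine gap, not a routine one, and the interpolation via \eqref{Rt-monotone}, which really does need $t_{k+1}/t_k\to 1$ to preserve the constant, collapses with it.

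Your McDiarmid alternative does not repair this, because the claimed Lipschitz bound is not justified. If $V$ and $V'$ agree outside a time window $I$ of length $\aleph$, then on $I$ two admissible paths can separate by $2M\aleph$; but after $I$ the two flows coincide, the fish only has unit control, and $V$ is merely \emph{locally} Lipschitz in $x$ with no uniform constant, so Gronwall gives no control on how fast that $2M\aleph$ gap amplifies over the remaining time $t-s_2$. Trying to re-close the gap using Corollary~\ref{new_cor} fails too: one would need $T\ge\Lambda(MT+2M\aleph)+C$ for some catch-up time $T$, which is impossible since $\Lambda M\ge 2$. So $H_t$ is not, as far as these arguments show, a bounded-differences functional of the independent $\aleph$-blocks with per-block oscillation $O(M\aleph)$, and McDiarmid does not apply.

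The paper's route is different in an essential way: it introduces the auxiliary variable $h(t,\event)=\sup_{|x|\le Rt}H_{t-\tau_0}(x,0,p,\event)+M\tau_0$, proves a subadditive decomposition $h(qt,\event)\le\sum_{k=0}^{q-1}h_q(0,kt,t,\event)$ into $q$ time-blocks that are i.i.d.\ by finite-range time dependence and stationarity, and then observes that if the normalized excess $f(qt,\event)>M/q$ then at least one block's excess exceeds $M/q$ \emph{and} another is positive. Independence turns this into a product of two probabilities, yielding $\Delta(qt)\le q(q-1)N^2\delta\,\Delta(t)\le\tfrac12\Delta(t)$ for $\delta$ small, hence $\Delta(q^mt)\le 2^{-m}$. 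This self-improving multiplicative estimate manufactures the geometric decay that makes Borel--Cantelli work along $\{q^m t\}$, and the Lipschitz bound $H_{t_1+t_2}\le H_{t_1}+Mt_2$ interpolates to all $t$. The missing idea in your proposal is exactly this use of subadditivity \emph{together with} block independence to convert a qualitative in-probability bound into quantitative geometric decay.
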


Here is an outline of the proof of Lemma~\ref{Lemma_3}.
First we adjust parameters in \eqref{Lemma_3_statement} to define
a more manageable random variable  $h(t, \event)$,
see \eqref{def_h4} and \eqref{def_h} below.
The advantages of $h(t, \event)$ over the original expression are
its sub-additivity and independence properties,
demonstrated in the course of the proof.
With the new variable $h(t, \event)$ the lemma is reduced to \eqref{R_R},
which we then prove in four steps.
In Step~1 we prove the sub-additivity \eqref{hsubadd}.
Unfortunately this sub-additivity is weaker than the classical one;
we only have a bound for $h(qt, \event)$ by a sum of $h_q(t, \event)$ where $h_q$ is another random variable parametrized by~$q\in\N$.
We overcome this difficulty by chaining random variables $h_q(t, \event)$ to $h(t, \event)$ in Step~2. Namely, we show in Step 2 that one can control
distributions of  $h_q(t, \event)$  by distribution of $h(t, \event)$,  see~\eqref{2qRt}.
Step~3 is the key one. There we prove almost sure convergence for $t$ ranging along a geometric progression, see \eqref{limsup-q^m}.
We do this by analysis of the probability distribution of $h(t, \event)$
using our stationarity and independence assumptions,  sub-additivity of  $h(t, \event)$ and its convergence in probability~\eqref{hprob}.
In our final Step 4 we show that the linear bound~\eqref{Hrelbound} on the growth of  $H_{t}( x_0, t_0, p,\event)$ is sufficient to deduce the convergence  for all $t \to \infty$.

\begin{proof}[Proof of Lemma~\ref{Lemma_3}] We begin with several preliminary observations.
By scaling it is sufficient to consider $p \in \R^\size$ with $|p|=1$.
We may also assume that $R \geq M$.
We fix such $p$ and $R$ for the rest of the proof.
Since $\RR_{t}(x_0,t_0,\event) \subset B_{Mt}(x_0)$, we have
\be \label{Hbound}
 H_t(x_0,t_0,p,\event) \le Mt . 
\ee
Moreover,
\be\label{Hrelbound}
  H_{t_1+t_2}(x_0,t_0,p,\event) \le H_{t_1}(x_0,t_0,p,\event) + Mt_2 
\ee
for all $t_1,t_2\ge 0$, since $\RR_{t_1+t_2}(x_0,t_0)$ is contained
in the $(Mt_2)$-neighborhood of $\RR_{t_1}(x_0,t_0)$.

For $x_0\in\R^n$, $t_0\in\R$, $t\ge\tau_0$, $\event\in\Events$, define
\be\label{def_h4}
 h(x_0,t_0,t,\event) = \sup_{|x-x_0|\le Rt}  H_{t-\tau_0}(x,t_0,p,\event)  + M\tau_0
\ee
and, for brevity,
\be\label{def_h}
 h(t,\event) = h(0,0,t,\event) .
\ee
For every $x_0 \in B_{Rt}(0)$ we have
$H_t(x_0, 0, p,\event) \le h(t,\event)$
by \eqref{Hrelbound} applied to $t_1=t-\tau_0$ and $t_2=\tau_0$.
Thus, in order to prove the Lemma it suffices to show that
\be \label{R_R}
\limsup_{t\to\infty} \frac{h(t,\event)}{t} 
 \le \bar H(p).
\ee

Let us now reformulate the convergence in probability from Lemma \ref{limsup-prob} in terms of $h(t, \event)$. We claim that, for every $\ep>0$,
\be\label{hprob}
 \Pm \left\{ \event : \frac{h(t,\event)}{t} > \bar H(p)+\ep \right\} \to 0 \quad\text{as $t\to\infty$}.
\ee
Indeed, by~\eqref{est2a} in Lemma \ref{limsup-prob} we have 
\be\label{hprob-2}
 \Pm \left\{ \event :  \forall x_0 \in B_{R t}(0), \  \RR_{t} (x_0, 0, \event) - x_0 \subset  (1+\ep) W_t
  \right\} \to 1 \quad\text{as $t\to\infty$} .
\ee
For every $\event$ satisfying the relation $\RR_{t} (x_0, 0, \event) - x_0 \subset  (1+\ep) W_t$ in~\eqref{hprob-2}, we have
\[
H_t(x_0, 0, p, \event) \le
\sup \left \{ p \cdot y \mid y \in (1+\ep) W_t \right \} =(1 +\ep) t \bar H(p) .
\]
Therefore, we can conclude from~\eqref{hprob-2} that
\[
 \Pm \left\{ \event : \forall x_0 \in B_{R t}(0), \ \frac{  H_t(x_0, 0, p, \event) }{t} \le (1+\ep)\bar H(p) \right\} \to 1 \quad\text{as $t\to\infty$},
\]
for every $\ep>0$, and \eqref{hprob} follows.

In order to state sub-additivity properties of $h(t,\event)$ we need one more definition. Fix $q\in\N$ and define
$$
 h_q(x_0,t_0,t,\event) = \sup_{|x-x_0|\le qRt}  H_{t-\tau_0}(x,t_0,p,\event)  + M\tau_0
$$
and
$$
 h_q(t,\event) = h_q(0,0,t,\event)
$$
for  $x_0\in\R^n$, $t_0\in\R$, $t\ge\tau_0$, $\event\in\Events$.
(The reader may notice that for $q=1$ one has $h_q(t,\event)=h(t,\event)$
but we do not need this fact in the proof).
Observe that
\be \label{hbound}
  h_q(x_0,t_0,t,\event) \le Mt 
\ee
by \eqref{Hbound}.
We are now ready for our four steps.

\medskip {\it Step 1}.  {\it Sub-additivity of $h(t, \event)$.}
We show here that for every $q\in\N$, $t\ge\tau_0$, $\event\in\Events$,
\be\label{hsubadd}
  h(qt,\event) \le \sum_{k=0}^{q-1} h_{q}(0, kt, t, \event).
\ee

Indeed, let $\gamma\co [0, qt-\tau_0]\to\R^n$ be an admissible path for $V_t(x, \event)$
with $\gamma(0)\in B_{Rt}(0)$. 
To prove \eqref{hsubadd},
it suffices to verify that
\be\label{hsubadd1}
 (\gamma(qt-\tau_0)-\gamma(0))\cdot p \le \sum_{k=0}^{q-1} h_{q}(0,kt,t,\event) - M\tau_0
\ee
for every such path $\gamma$.
Observe that $\gamma(kt)\in B_{qRt}(0)$ for $k=0,\dots, q-1$ since 
$\gamma(0)\in B_{Rt}(0)$ and $|\dot\gamma|\le M\le R$.
Hence
$$
 (\gamma((k+1)t-\tau_0)-\gamma(kt))\cdot p
 \le H_{t-\tau_0}(\gamma(kt), kt, p,\event)
 \le h_{q}(0,kt, t, \event) - M\tau_0
$$
for each $k=0,1,\dots,q-1$.
We also have
$$
 (\gamma(kt)-\gamma(kt-\tau_0))\cdot p
 \le |\gamma(kt)-\gamma(kt-\tau_0)|
 \le M\tau_0
$$
for each $k=1,\dots,q-1$ .
Summing up these $2q-1$ inequalities yields \eqref{hsubadd1},
which implies \eqref{hsubadd}.

\medskip {\it Step 2}.  {\it Chaining of $h_q(t, \event)$.}
The goal of this step is to show that there exists $N=N(q,n)\in\N$
such that
\be \label{2qRt}
 \Pm\{\event: h_{q}(t, \event) > \alpha \}
 \le N\cdot\Pm\{\event: h(t,\event) > \alpha \}
\ee
for all $\alpha\in\R$, $t\ge\tau_0$, $\event\in\Events$.

To prove this, observe that a ball of radius $qRt$ can be covered
by $N$ balls of radius $Rt$:
$$
 B_{qRt}(0) \subset \bigcup_{i=1}^N B_{Rt}(z_i)
$$
for some $z_1,\dots,z_N$, where $N$ is determined by $q$ and~$n$.
Therefore
$$
 h_{q}(t,\event) \le \max_{1\le i\le N} h(z_i, 0, t, \event) ,
$$
hence
$$
\Pm\{\event: h_{q}(t,\event) > \alpha \}
\le \sum_{i=1}^N \Pm\{\event: h(z_i, 0,t,\event) > \alpha \} .
$$
Due to the space-time stationarity, each summand in the last sum
equals $\Pm\{\event: h(t,\event) > \alpha \}$
and the inequality \eqref{2qRt} follows.

\medskip {\it Step 3}.  {\it Convergence along a geometric progression.} 
As we have mentioned earlier,  this is the key step. Recall that our goal is to prove \eqref{R_R}.
Here we prove that the same inequality with a small error term
holds for $t$ ranging along a geometric progression with
common ratio~$q$, see \eqref{limsup-q^m} below.

Fix $\ep>0$ and $q\in\N$, and let $N=N(q,n)$ from Step~2.
Define
$$
  f(t,\event) = \frac{h(t,\event)}{t} - \bar H(p) - \ep
$$
and
$$
  f_k(t,\event) =  \frac{h_{q}(0, kt, t, \event)}{t} - \bar H(p) - \ep .
$$
for all $t\ge\tau_0$, $\event\in\Events$, $k\in\{1,\dots,q\}$.
Note that $f_k(t,\event)\le M$ by \eqref{hbound}.

With this notation, \eqref{hsubadd} takes the form
\be\label{fsubadd}
 f(qt,\event) \le \frac{1}{q} \sum_{k=0}^{q-1} f_k(t,\event) .
\ee
The inequality \eqref{2qRt} along with the space-time stationarity imply that
\be \label{fkprob}
  \Pm\{ f_k(t,\event) > \alpha \} \le N\cdot \Pm\{ f(t,\event) > \alpha \}
\ee
for all $\alpha\in\R$.

Fix a positive $\delta<\frac1{2q^2N^2}$.
By \eqref{hprob},
$$
 \Pm \{\event : f(t,\event) > 0 \} \to 0  \quad\text{as $t\to\infty$} .
$$
Hence there exists $t_0\ge\tau_0$ such that
\be \label{fprob}
 \Pm \{\event : f(t,\event) > 0 \} < \delta \qquad \forall t\ge t_0.
\ee

Define
\be \label{p(t)}
  \Delta(t) = \Pm \left\{\event : f(t,\event) > \frac{M}{q} \right\} 
\ee
for all $t\ge\tau_0$.
We are going to estimate $\Delta(qt)$ in terms of $\Delta(t)$ using the above inequalities.

Assume that $t\ge t_0$ where $t_0$ is the same as in \eqref{fprob}.
The bound $f_k(t,\event)\le M$ and \eqref{fsubadd} imply the following property:
For every $\event\in\Events$ such that $f(qt,\event)>\frac{M}{q}$,
at least two of the terms $f_k(t,q)$ must be positive
and at least one of them must be greater than $\frac{M}{q}$.
Therefore
\be\label{qtprob}
 \Delta(qt) \le \sum_{i\ne j} \Pm \left\{\event: f_i(t,\event)>\frac{M}{q} \text{ \bf and } f_j(t,\event)>0 \right\} .
\ee
Observe that the random variables $f_i(t,\cdot)$ and $f_j(t,\cdot)$ are independent if $i\ne j$.
This follows from the finite range time dependence and
the fact that $f_k(t,\event)$ is determined by the restriction of the flow
to the time interval $[kt,(k+1)t-\tau_0]$.
Hence \eqref{qtprob} can be rewritten as
$$
 \Delta(qt) \le \sum_{i\ne j} \Pm \left\{\event: f_i(t,\event)> M/q \right\} \cdot \Pm \left\{ f_j(t,\event)>0 \right\} .
$$
This and \eqref{fkprob}, \eqref{fprob}, \eqref{p(t)} imply that
$$
 \Delta(qt) \le \sum_{i\ne j} N \Delta(t) \cdot N\delta = q(q-1)N^2 \delta \Delta(t) \le \frac{\Delta(t)}{2}
$$
where the last inequality follows from the choice of $\delta$.

By induction it follows that $\Delta(q^m t)\le 2^{-m}$
for all $t\ge t_0$ and $m\in\N$.
By the Borel-Cantelli Lemma and \eqref{p(t)}, this implies that
for every $t>0$
$$
 \limsup_{m\to\infty} f(q^mt,\event) \le \frac{M}{q}
$$
for a.e.\ $\event\in\Events$.
Substituting the definition of $f$
yields that
\be \label{limsup-q^m}
 \limsup_{m\to\infty} \frac{h(q^mt,\event)}{q^mt}
 \le \bar H(p) + \ep + \frac{M}{q}
\ee
for a.e.\ $\event\in\Events$.

\medskip 
{\it Step 4}.  {\it Convergence for all $t$.}
To finish the proof, choose a partition $1=t_1\le t_2\le\dots\le t_l=q$ of $[1,q]$
such that $t_{i+1}<(1+\ep)t_i$ for all $i<l$.
For every $t\ge q$ there exist positive integers $m\in\N$ and $i<l$ such that
$$
 q^m t_i \le t < q^m t_{i+1} < q^m t_i + \ep t .
$$
These inequalities and \eqref{Hrelbound} imply that
$$
 h(t,\event) \le h(q^m t_i,\event) + M\ep t, 
$$
and hence
$$
 \limsup_{t\to\infty} \frac{h(t,\event)}{t} 
 \le \limsup_{m\to\infty} \, \max_{1\le i<l} \frac{h(q^m t_i,\event)}{q^mt_i} + M\ep
 = \max_{1\le i<l} \, \limsup_{m\to\infty} \frac{h(q^m t_i,\event)}{q^mt_i} + M\ep .
$$
for all $\event\in\Events$.
This and \eqref{limsup-q^m} imply that
$$
 \limsup_{t\to\infty} \frac{h(t,\event)}{t} 
 \le \bar H(p) + \frac{M}{q} + (M+1)\ep
$$
for a.e.\ $\event\in\Events$.
Since this holds for all $\ep>0$ and $q\in\N$, the estimate~\eqref{R_R} follows. 
This finishes the proof of the lemma.
\end{proof}

\begin{lemma}\label{limsup-as_} For any fixed $R>0$
\be\label{est3}
\lim_{t\to \infty}  \sup_{|x_0| \leq Rt} \rho^+(x_0,0,t,\event) = 0 \quad\text{almost surely}.
\ee
 \end{lemma}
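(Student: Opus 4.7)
The plan is to reformulate the inclusion $\RR_t(x_0,0,\event)\subset x_0+(1+\ep)W_t$ as a uniform-in-direction bound on support functions, and then upgrade the pointwise-in-$p$ almost sure estimate of Lemma~\ref{Lemma_3} to a uniform-in-$p$ one via a finite $\delta$-net argument on the unit sphere.

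First, since $W_t=tW_1$ is a convex body with support function $p\mapsto t\bar H(p)$, and since a (possibly nonconvex) set is contained in a closed convex set if and only if its support function is dominated by that of the target (nonconvexity is harmless because the support function of a set equals that of its convex hull), the inclusion $\RR_t(x_0,0,\event)\subset x_0+(1+\ep)W_t$ is equivalent to $H_t(x_0,0,p,\event)\le(1+\ep)t\bar H(p)$ for every unit vector~$p$. Because $\bar H(p)\ge 1-\Delta>0$ on the unit sphere, to conclude $\rho^+(x_0,0,t,\event)\le\ep$ it suffices to prove that for every $\eta>0$, almost surely,
\begin{equation}
\limsup_{t\to\infty}\sup_{|x_0|\le Rt,\,|p|=1}\left(\frac{H_t(x_0,0,p,\event)}{t}-\bar H(p)\right)\le\eta,
\end{equation}
from which the desired bound follows by choosing $\eta=\ep(1-\Delta)$.

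To derive this uniform estimate, observe that both $\bar H$ and $p\mapsto H_t(x_0,0,p,\event)/t$ are $M$-Lipschitz on $\R^{\size}$: the former because $W_1\subset B_M(0)$, the latter because $\RR_t(x_0,0,\event)-x_0\subset B_{Mt}(0)$ by~\eqref{Mt-ball}. Fix $\eta>0$, set $\delta=\eta/(3M)$, and select a finite $\delta$-net $\{p_1,\dots,p_K\}$ on the unit sphere. Applying Lemma~\ref{Lemma_3} to each $p_i$ and intersecting the resulting $K$ full-measure events, we obtain almost surely
\begin{equation}
\limsup_{t\to\infty}\sup_{|x_0|\le Rt}\frac{H_t(x_0,0,p_i,\event)}{t}\le\bar H(p_i),\qquad i=1,\dots,K.
\end{equation}
For an arbitrary unit $p$, choose $p_i$ with $|p-p_i|\le\delta$; the two Lipschitz bounds combine to give
\begin{equation}
\frac{H_t(x_0,0,p,\event)}{t}-\bar H(p)\le\frac{H_t(x_0,0,p_i,\event)}{t}-\bar H(p_i)+2M\delta,
\end{equation}
so the outer $\limsup$ in $t$ is at most $2M\delta<\eta$ uniformly in $x_0$ and $p$, almost surely, as required.

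The principal obstacle is precisely this upgrade from pointwise-in-$p$ to uniform-in-$p$ almost sure convergence: Lemma~\ref{Lemma_3} provides a full-measure event for each fixed $p$, but an uncountable union of null sets could in principle fail to be null. The $\delta$-net reduces the uniform-in-$p$ requirement to a finite union of almost sure events, and Lipschitz continuity in $p$ closes the remaining gap; uniformity in $x_0\in B_{Rt}(0)$ is already built into Lemma~\ref{Lemma_3}, so no additional probabilistic input is needed.
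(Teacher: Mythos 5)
Your proof is correct and follows essentially the same strategy as the paper: reduce the uniform-in-$p$ estimate to finitely many directions and then invoke Lemma~\ref{Lemma_3} for each. The paper realizes this reduction by choosing finitely many half-spaces whose intersection approximates $W_1$ from outside (a polytope $\widetilde W_1 \subset (1+\ep)W_1$), whereas you pick a $\delta$-net of directions and use the $M$-Lipschitz continuity of both support functions to bridge the gap; these are two phrasings of the same compactness argument, and both correctly pass the a.s.\ intersection over a finite set of directions.
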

 
\begin{proof}
Fix $R>0$ and $\ep\in(0,1)$.
Since $W_1$ is a compact convex set, we have
$$
  W_1 = \{ x\in\R^n : x\cdot p \le \bar H(p),  \ \forall p\in\R^n \} .
$$
Furthermore there is a finite collection of vectors
$p_1,\dots,p_N\in\R^n$ with $|p_i|=1$ such that
$$
  \widetilde W_1 := \{ x\in\R^n :  x\cdot p_i \le \bar H(p_i) ,\ \forall i\} \subset (1+\ep) W_1 .
$$
By Lemma \ref{Lemma_3}, for almost every $\event\in\Events$
there exists $t_\event>0$ such that for all  $t>t_\event$ and $x_0\in B_{Rt}(0)$,
$$
(x-x_0)\cdot p_i \le (1+\ep) t \bar H(p_i), \quad \forall x\in\RR_t(x_0,0,\event), \forall i.
$$
This implies that
$$
 \RR_t(x_0,0,\event) - x_0 \subset (1+\ep) t \widetilde W_1 \subset (1+\ep)^2  W_t
$$
and therefore $\rho^+(x_0,0,t,\event)<(1+\ep)^2-1<3\ep$. 
Since $\ep$ is arbitrary, \eqref{est3} follows.
\end{proof}

\begin{proof}[\bf Proof of Theorems \ref{limit_shape} and~\ref{Gthm}]
Theorem \ref{limit_shape} follows by setting $W=W_1$
and applying \eqref{est1} and~\eqref{est3}.

To prove Theorem \ref{Gthm} we recall the control representation
\eqref{controlrep} for the solution of the G-equations.
For $x\in\R^n$, $t>0$ and $\event\in\Events$ define
$$
 \RR^-_t(x,\event) = \{ y\in\R^n : x\in\RR_t(y,0,\event) \} .
$$
The control representation for the solution of 
\eqref{Geqn} and \eqref{limit_h} have the form
$$
 u^\ep(t,x, \event) = \sup \{ u_0(y) : y \in \ep \RR^-_{t/\ep}(x/\ep, \event) \}
$$
and
$$
 \bar u(t,x) = \sup \{ u_0(y) : y \in x-W_t \} .
$$
Let $\delta > 0$, $h > 0$, and $R > 0$. From~\eqref{est1} and~\eqref{est3} 
we see that for almost every $\event\in\Events$
there exists $\ep_0 = \ep_0(\delta,R,h,\event) > 0$ so that for all $\abs{x} \leq R$, $t \geq h$, and $\ep \leq \ep_0$ we have
$$
\{ x - W_{t(1 -  \delta)} \} \subset \ep \RR^-_{t/\ep}(x/\ep, \event) \subset \{ x - W_{t(1 + \delta)}\},
$$
Therefore
\be \label{liminfeps}
\bar u(t(1 - \delta),x) \leq u^\ep(t,x, \event) \leq \bar u(t(1 + \delta),x). 
\ee
Since $\delta>0$ is arbitrary and $\bar u(t,x)$ is 
uniformly continuous, 
\eqref{liminfeps} implies that $u^\ep \to \bar u$ uniformly on compact sets in $(0,\infty) \times \R^\size$. 
To obtain the locally uniform convergence down to time $t = 0$,  we need the uniform $L^\infty$ bound on $V_t$ and uniform continuity of $u_0(x)$.
 Observe that 
$$
\sup_{t \in [0,h]} \abs{u^\ep(t,x, \event) - \bar u(t,x)}  \leq  \sup_{t \in [0,h]} \abs{u^\ep(t,x, \event) - u_0(x)}  + \sup_{t \in [0,h]} \abs{\bar u(t,x) - u_0(x)} .
$$
For any $y \in  \ep \RR^-_{t/\ep}(x/\ep, 0, \event)$ we have  $\abs{y - x} \leq M t $. Thus the first term on the right is bounded by
\be
\sup_{t \in [0,h]} \abs{u^\ep(t,x, \event) - u_0(x)} \leq \sup_{\substack{y \in \R^\size\\ \abs{y - x} \leq M h }} \abs{u_0(y) - u_0(x)} \leq \phi(M h ),
\ee
where $\phi$ is the modulus of continuity of $u_0(x)$.
 This and a similar bound on $\abs{\bar u(t,x) - u_0(x)}$ implies that
\be
\lim_{h \to 0} \left [ \limsup_{\ep \to 0} \sup_{\substack{x \in \R^\size\\ t \in [0,h]}} \abs{u^\ep(t,x,\event) - \bar u(t,x)}\right] = 0. \label{tlessh}
\ee

Combining~\eqref{liminfeps}~and~\eqref{tlessh}, we conclude that (\ref{homconv}) holds with probability one.
\end{proof}

\appendix

\section{Functions of bounded variation}\label{BV}

We collect here needed facts about functions of bounded variation (BV functions) in $\R^{\size}$, $\size \geq 2$.
We followed \cite{A} and~\cite{M}.

\begin{defin}[Proposition 3.6 and Definition 3.4 in~\cite{A}]
\label{d:variation}
Let $\Omega\subset\R^n$ be an open set
and $u \in L^1(\Omega)$.
The {\it variation} of $u$ in $\Omega$, denoted by $\Var(u,\Omega)$, is
\[
\Var(u,\Omega) = \sup \left\{  \int_\Omega u \hbox{ div} \phi  : 
\phi \in [C^1_c (\Omega)]^{\size}, \| \phi\|_{L^\infty} \leq 1 \right\}.
\]
Here and below $[C^1_c (\Omega)]^{\size}$ denotes 
the set of all compactly supported $C^1$ functions from $\Omega$ to $\R^{\size}$.

The space $BV(\Omega)$ consists of all functions  $u \in L^1(\Omega)$ with $\Var(u,\Omega)<\infty$.
It is equipped with the norm
\[
 \| u\|_{BV} = \int_{\Omega} |u| dx + \hbox{Var}(u,\Omega).
\]
\end{defin}

\begin{remark}
The distributional derivative $Du$ of a BV-function $u$
is a (vector-valued) finite Radon measure,
and $\Var(u,\Omega)=|Du|(\Omega)$.
We occasionally 
 write
$$
\Var(u,\Omega)= \int_{\Omega} |\nabla u|,
$$
where the right-hand side is understood in the sense of distributions.
 \end{remark}


\begin{defin}[Definition 3.35 in~\cite{A}]\label{perim}
The {\em perimeter} $P(E,\Omega)$ of a measurable set $E \subset \R^n$
in an open set $\Omega\subset\R^n$ is defined by
\[
P(E,\Omega) =  \hbox{Var}(\chi_E, \Omega) 
= \sup \left\{  \int_E   \hbox{ div}  \phi : \phi \in [C^1_c (\Omega)]^{\size}, \| \phi \|_{L^\infty} \leq 1   \right\}.
\]
We denote $P(E)=P(E,\R^n)$.
\end{defin}

In all cases of interest in this paper the set $E$ is bounded.

\begin{defin}[Reduced boundary, Definition 3.54 in~\cite{A}]\label{reduced}
Let $E \subset \mathbb{R}^{\size}$ be a set of finite perimeter. 
The {\it reduced boundary} $\partial^*E$ of $E$ is the collection of points 
$x \in $supp$(|D \chi_E|)$ such that the limit
\begin{equation}\label{oh}
\nu_E(x) =\lim_{\rho \to 0} \frac{\int_{B_\rho(x)} \nabla \chi_E}{\int_{B_\rho(x)} |\nabla \chi_E|}
\end{equation}
exists in $\mathbb{R}^{\size}$ and satisfies $|\nu_E(x)|=1$.
The integrals here are understood in the sense of distributions.
The function $\nu_E: \partial^* E \to S^{\size-1}$ is called the generalized inner normal to $E$. 
\end{defin}

\begin{thm}[De Giorgi Theorem, Theorem 15.9 in~\cite{M}]\label{dg}
If $E \in \mathbb{R}^{\size}$ is a set of finite perimeter, then the reduced boundary
$\partial^* E$ is $\HA^{\size-1}$-rectifiable and
$$
  P(E,\Omega) = \HA^{\size-1}(\Omega \cap \partial^* E)
$$
for every open set $\Omega\subset\R^n$.
\end{thm}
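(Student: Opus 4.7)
The plan is to follow De Giorgi's classical argument, whose three pillars are \emph{density estimates}, the \emph{blow-up theorem}, and \emph{rectifiability via approximate tangent planes}, culminating in the identification of the perimeter measure with $\HA^{\size-1}\llcorner\pd^*E$.

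First I would establish lower and upper density estimates. Fix $x\in\pd^*E$ and consider the function $m(r)=|E\cap B_r(x)|$. Using the Euclidean isoperimetric inequality together with the relative isoperimetric inequality in balls (and the coarea-type decomposition $P(E,B_r)\le P(E\cap B_r)+\HA^{\size-1}(E\cap\pd B_r)$), one shows that $m'(r)\ge c\,m(r)^{(\size-1)/\size}$ at points where $m(r)$ is not already comparable to $r^\size$; a standard bootstrap then yields
\[
 c\,r^\size \le |E\cap B_r(x)| \le (1-c)\,r^\size,
 \qquad
 c\,r^{\size-1}\le P(E,B_r(x)) \le C\,r^{\size-1}
\]
for all sufficiently small $r$, where $c,C$ depend only on~$\size$. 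The definition of the reduced boundary and convergence of the normalized normals \eqref{oh} is used to rule out the degenerate case in which $m(r)/r^\size$ tends to $0$ or~$1$.

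The heart of the argument is the blow-up theorem. For $x\in\pd^*E$ define the rescaled sets $E_{x,r}=(E-x)/r$. By BV-compactness and the density estimates, every sequence $r_k\to 0$ admits a subsequence along which $\chi_{E_{x,r_k}}\to\chi_F$ in $L^1_{\mathrm{loc}}(\R^\size)$ with $F$ a set of locally finite perimeter, and the perimeter measures $|D\chi_{E_{x,r_k}}|$ converge weakly to $|D\chi_F|$. The convergence \eqref{oh} of the normals forces the vector-valued measure $D\chi_F$ to be parallel to $\nu_E(x)$; a short calculation, combined with the minimality inherited from the monotonicity of the isoperimetric ratio, shows that $F$ must be the half-space $H_{\nu_E(x)}=\{y:y\cdot\nu_E(x)\ge 0\}$. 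Hence the blow-up is unique, and
\[
 \lim_{r\to 0}\frac{|D\chi_E|(B_r(x))}{\vol_{\size-1}\,r^{\size-1}} = 1,
 \qquad
 \lim_{r\to 0}\frac{\HA^{\size-1}(\pd^*E\cap B_r(x))}{\vol_{\size-1}\,r^{\size-1}} = 1.
\]
This is the step I expect to be the main technical obstacle: identifying tangent objects as half-spaces requires delicate use of the relative isoperimetric inequality and a careful compactness/lower semicontinuity argument.

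With the blow-up in hand, rectifiability follows from the Marstrand--Mattila criterion: at $\HA^{\size-1}$-a.e.\ point of $\pd^*E$ there is an approximate tangent hyperplane (namely $\nu_E(x)^\perp$) together with the density~$1$, so $\pd^*E$ is an $\HA^{\size-1}$-rectifiable set. Finally, to identify the measures, I would use the two limits above to show that the Radon--Nikodym derivative of $|D\chi_E|$ with respect to $\HA^{\size-1}\llcorner\pd^*E$ equals~$1$, so $|D\chi_E|=\HA^{\size-1}\llcorner\pd^*E$. To finish the identity $P(E,\Omega)=\HA^{\size-1}(\Omega\cap\pd^*E)$, it remains to show $\mathrm{supp}(|D\chi_E|)\setminus\pd^*E$ has zero $\HA^{\size-1}$-measure; this is obtained by a Besicovitch-covering argument applied to the set where the limit in \eqref{oh} either fails to exist or has length strictly less than~$1$, using the upper density bound $|D\chi_E|(B_r(x))\le Cr^{\size-1}$ to control the excess.
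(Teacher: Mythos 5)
The paper does not prove this theorem; it states it with a citation to Maggi's book (Theorem 15.9 in \cite{M}) and uses it as a black box, so there is no ``paper's own proof'' to compare against. Your sketch is, broadly, the classical De Giorgi argument that appears in the cited reference: density estimates, blow-up, rectifiability, identification of $|D\chi_E|$ with $\HA^{\size-1}\llcorner\pd^*E$. That architecture is correct.

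One step is described incorrectly, though. In identifying the blow-up limit $F$ as a half-space you invoke ``minimality inherited from the monotonicity of the isoperimetric ratio.'' A general set of finite perimeter is not a perimeter minimizer, and no monotonicity formula is available or used here; those tools belong to the regularity theory for minimizers, not to the structure theorem. What actually forces $F$ to be a half-space is the constant-normal-direction lemma: the convergence in \eqref{oh} together with weak-$*$ convergence of $D\chi_{E_{x,r_k}}$ shows that the Radon--Nikodym density of $D\chi_F$ with respect to $|D\chi_F|$ is the constant unit vector $\nu_E(x)$, and a mollification argument (the mollified $\chi_F$ has gradient everywhere parallel to $\nu_E(x)$, hence depends only on the coordinate $y\cdot\nu_E(x)$ and is monotone in it) identifies $F$ as a half-space up to a null set. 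Also, in the last paragraph the statement that $|D\chi_E|$ is concentrated on $\pd^*E$ follows directly from the Besicovitch differentiation theorem for the Radon--Nikodym derivative $\nu_E=D\chi_E/|D\chi_E|$ with respect to $|D\chi_E|$; the upper density bound $|D\chi_E|(B_r(x))\le Cr^{\size-1}$ is not what carries that step. With these two corrections the sketch matches the standard proof that the paper cites.
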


 

Recall that $I_r=[-r,r]^n$ is a cube with edge length $2r$ and $I_r^\circ$
denotes its interior.

\begin{thm}[Relative isopertimetric inequality in the cube]\label{iso} 
If $E$ is a set of finite perimeter in~$\R^{\size}$, 
then for every $r>0$
\begin{equation}\label{isso}
\min\left(| E \cap I_r |, | I_r \setminus E| \right)^{\frac{\size-1}{\size}} 
\leq C P( E,  I_r^\circ) =  C \HA^{n-1}(\partial^* E \cap I_r^\circ), 
\end{equation}
where $C$ is a constant depending on $n$ only.
\end{thm}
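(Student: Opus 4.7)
The plan is to reduce to the unit cube and deduce the inequality from a Sobolev--Poincar\'e inequality for BV functions on that cube. By the scaling $x \mapsto rx$, both sides of \eqref{isso} are homogeneous of degree $n-1$, so it suffices to prove the statement for the unit cube $Q := [-\tfrac12,\tfrac12]^n$. Since $P(E, Q^\circ) = P(Q\setminus E, Q^\circ)$ and the left-hand side of \eqref{isso} is symmetric under swapping $E$ with $Q\setminus E$, we may assume without loss of generality that $|E \cap Q| \le \tfrac12$; it then remains to show $|E \cap Q|^{(n-1)/n} \le C_n\, P(E, Q^\circ)$.

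The key ingredient is the following Sobolev--Poincar\'e inequality on $Q$: there is a constant $C_n$ such that every $u \in BV(Q)$ satisfies
\[
\|u - u_Q\|_{L^{n/(n-1)}(Q)} \le C_n\, \|Du\|(Q), \qquad u_Q := \int_Q u\,dx.
\]
Granting this, apply it with $u = \chi_E$; then $u_Q = |E \cap Q| \le \tfrac12$ and a direct calculation gives
\[
\|\chi_E - u_Q\|_{L^{n/(n-1)}(Q)}^{n/(n-1)} = |E\cap Q|(1-u_Q)^{n/(n-1)} + (1-|E\cap Q|)\, u_Q^{n/(n-1)} \ge 2^{-n/(n-1)}\, |E \cap Q|,
\]
where the inequality uses $1 - u_Q \ge \tfrac12$. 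Since $\|D\chi_E\|(Q) = P(E,Q^\circ)$, the relative isoperimetric inequality \eqref{isso} follows with a constant depending only on~$n$.

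To establish the Sobolev--Poincar\'e inequality on $Q$, I would argue by extension-by-reflection. Tile the enlarged cube $Q' := [-\tfrac32,\tfrac32]^n$ by $3^n$ translates of $Q$, and define $\tilde u \in L^1(Q')$ by successively reflecting $u$ across the shared faces. By approximating $u$ with smooth functions in the strict BV topology and passing to the limit via the lower semicontinuity of the total variation, one verifies that the reflection introduces no singular mass on the interior faces, so $\|D\tilde u\|(Q') \le 3^n\, \|Du\|(Q)$. Multiplying $\tilde u - u_Q$ by a smooth cutoff $\varphi \in C^\infty_c((Q')^\circ)$ with $\varphi \equiv 1$ on $Q$ yields a compactly supported $v \in BV(\R^n)$ with $\|Dv\|(\R^n) \le C'_n\, \|Du\|(Q)$, the $L^1$ contribution from $\nabla\varphi$ being absorbed using the elementary $L^1$-Poincar\'e inequality on $Q'$ (which is proved by integration along one-dimensional fibres). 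The classical Gagliardo--Nirenberg--Sobolev inequality $\|v\|_{L^{n/(n-1)}(\R^n)} \le \gamma_n \|Dv\|(\R^n)$ then gives the bound on $\|u - u_Q\|_{L^{n/(n-1)}(Q)}$.

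The main obstacle is the reflection step: one must verify that reflecting a BV function across a face of the cube produces no singular measure concentrated on that face. This is the BV analogue of the extension of $W^{1,p}$ functions across a Lipschitz boundary, and is cleanest to handle by first doing it for smooth approximations (where the statement is elementary via the chain rule) and then passing to the BV limit using lower semicontinuity and strict convergence.
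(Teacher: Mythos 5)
Your core step is the same one the paper uses: apply the BV Sobolev--Poincar\'e inequality on the cube to $u=\chi_E$, expand $\|\chi_E-u_Q\|_{L^{n/(n-1)}}$ explicitly, and bound it from below by a multiple of $\min(|E\cap Q|,|Q\setminus E|)^{(n-1)/n}$; your WLOG reduction $u_Q\le\tfrac12$ just streamlines the algebra that the paper carries out symmetrically, and your scaling reduction is the paper's remark that the Poincar\'e constant is $r$-independent. The genuine divergence is in how the Sobolev--Poincar\'e inequality itself is handled: the paper simply quotes it (Remark 3.50 in Maggi's book), while you sketch a self-contained proof by reflecting $u$ to $Q'=[-\tfrac32,\tfrac32]^n$, cutting off, controlling the $L^1$ contribution of $\nabla\varphi$ by the elementary $L^1$-Poincar\'e inequality on $Q'$, and then applying Gagliardo--Nirenberg--Sobolev on $\R^n$. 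That sketch is sound, and the issue you flag --- that reflection across a face must introduce no singular mass on the interface --- is indeed the only delicate point. Your approach (verify it for smooth approximants, where $\tilde u_k$ is Lipschitz and $\|D\tilde u_k\|(Q')=3^n\|Du_k\|(Q)$, then pass to the limit along a strictly convergent sequence using lower semicontinuity) works; a slightly cleaner alternative is to invoke the BV gluing theorem directly, noting that the one-sided traces of $\tilde u$ across any shared face coincide by construction so the jump part vanishes there. Net effect: same key lemma and same estimate, but your version is longer and more self-contained because you prove the ingredient the paper merely cites.
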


\begin{proof}
This inequality is standard
but we could not find exactly this formulation in the literature.
For the sake of completeness we include a proof here.

Every $u\in BV(I_r^\circ)$ satisfies the following Sobolev inequality (see e.g.\ Remark 3.50 in \cite{M}):
there is a constant $C_1=C_1(n)$ such that
\be\label{sob}
\left(\int_{I_r} |u-\overline u|^{\frac{\size}{\size-1}} \right)^{\frac{\size-1}{\size}} \leq C_1 \hbox{Var}(u, I_r^\circ),
\ee
where $\overline u$ denotes the average of $u$ over $I_r$:
$$
  \overline u = \frac1{|I_r|} \int_{I_r} u .
$$
The fact that $C_1$ does not depend on $r$ follows from a scaling argument.

Let $u=\chi_E$, then 
$\overline u = \frac{|E \cap I_r|}{|I_r|}$
and $1-\overline u = \frac{|I_r\setminus E|}{|I_r|}$,
hence
\[
\int_{I_t} |u - \overline u|^{\frac{\size}{\size-1}} \, dx 
= \left(\frac{|I_r\setminus E|}{|I_r|}  \right)^{\frac{\size}{\size-1}} |E \cap I_r| + 
\left(\frac{|E \cap I_r|}{|I_r|}  \right)^{\frac{\size}{\size-1}} |I_r \setminus E|. 
\]
Therefore
\[
\left( \int_{I_r} |u - \overline u|^{\frac{\size}{\size-1}} dx \right)^{\frac{\size-1}{\size}} 
\geq  \frac{1}{|I_r|} \left( |E \cap I_r|^{\frac{\size}{\size-1}}  +  |I_r \setminus E|^{\frac{\size}{\size-1}}  
\right)^{\frac{\size-1}{\size}}  \min\left(| E \cap I_r |, | I_r \setminus E| \right)^{\frac{\size-1}{\size}} 
\]
\[
\geq  \frac12 \min\left(| E \cap I_r |, | I_r \setminus E| \right)^{\frac{\size-1}{\size}} .
\]
This and the Sobolev inequality~\eqref{sob} implies the inequality in~\eqref{isso}. 
The equality in~\eqref{isso} holds due to the De Giorgi Theorem~\ref{dg}.
\end{proof}

\begin{cor}\label{isopercube-lite}
If $E$ is a set of finite perimeter in $\R^n$, then for every $r>0$
$$
 \min\left(| E \cap I_r |, | I_r \setminus E| \right) \le Cr P( E,  I_r^\circ) 
 = C r \HA^{n-1}(\partial^* E \cap I_t^\circ)
$$
where $C$ is a constant depending only on $n$.
\end{cor}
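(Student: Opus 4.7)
The plan is to derive Corollary \ref{isopercube-lite} as an immediate consequence of Theorem \ref{iso} together with the trivial volume bound $\min(|E\cap I_r|,|I_r\setminus E|)\le\tfrac12|I_r|=2^{n-1}r^n$. The point is that Theorem \ref{iso} contains a factor raised to the power $\frac{n-1}{n}$, so to obtain the linear (rather than sub-linear) bound stated here, one simply trades the remaining $\frac1n$-power factor for a factor of $r$ using the a priori volume estimate.

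Concretely, let $m:=\min(|E\cap I_r|,|I_r\setminus E|)$. Write
\[
 m = m^{\frac{n-1}{n}}\cdot m^{\frac{1}{n}},
\]
and bound the second factor by
\[
 m^{\frac{1}{n}} \le \bigl(\tfrac12 |I_r|\bigr)^{\frac{1}{n}} = 2^{\frac{n-1}{n}}\, r .
\]
Applying Theorem \ref{iso} to the first factor gives $m^{\frac{n-1}{n}}\le C\,P(E,I_r^\circ)$, so that
\[
 m \le 2^{\frac{n-1}{n}}\, C\, r\, P(E,I_r^\circ).
\]
Renaming the constant yields the first inequality of the corollary. The identity $P(E,I_r^\circ)=\HA^{n-1}(\partial^* E\cap I_r^\circ)$ is immediate from the De Giorgi Theorem~\ref{dg} applied with $\Omega=I_r^\circ$.

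There is no real obstacle here; the statement is simply a repackaging of Theorem \ref{iso} that is more convenient for the slicing argument in Lemma \ref{flux-main} (where one is given a perimeter bound and needs a linear, rather than an $(n-1)/n$-power, bound on the smaller-volume piece inside each face-sized sub-cube). The only thing worth double-checking is that the constant depends only on $n$, which is clear since both $2^{(n-1)/n}$ and the constant from Theorem \ref{iso} depend only on $n$, and the bound $m\le\tfrac12|I_r|$ is dimension-free.
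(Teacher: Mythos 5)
Your proposal is correct and is essentially the paper's own proof: the paper likewise obtains the bound by combining Theorem~\ref{iso} with the trivial volume estimate on $\min(|E\cap I_r|,|I_r\setminus E|)$ (the paper uses $|I_r|$ rather than $\tfrac12|I_r|$, a cosmetic difference absorbed in the constant) and then invokes the De Giorgi Theorem for the equality. Nothing to add.
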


\begin{proof}
The inequality follows immediately from \eqref{isso}
and the trivial estimate
$$
 \min\left(| E \cap I_r |, | I_r \setminus E| \right) \le |I_r| = 2^n r^n.
$$
(See also \cite[Remark 3.45]{A} for a different proof.)
\end{proof}





\begin{thm}[Federer Coarea Formula, Theorem 2.93  in~\cite{A}]\label{coarea2}
Let $f: \mathbb{R}^{\size} \to \mathbb{R}$ be a Lipschitz function
and $E\subset\R^n$ an  $\HA^{k}$-rectifiable set.
Then the function $t \to \HA^{k-1}(E \cap f^{-1}(t))$
is Lebesgue measurable, $E \cap f^{-1}(t)$ is 
$\HA^{k-1}$-rectifiable for almost every $t\in\R$, and 
\[
 \int_E  |\nabla_\tau f(x)| \, d \HA^{k}(x)  = \int_{t=0}^\infty \HA^{k-1}(E \cap f^{-1}(t))\,  dt
\]
where $\nabla_\tau f(x)$ is the component of $\nabla f(x)$ tangential to $E$.
\end{thm}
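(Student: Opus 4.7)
The plan is to reduce the statement to the classical Euclidean coarea formula for Lipschitz functions on $\R^k$ by exploiting the rectifiability structure of $E$, and then to pass from $\R^k$ back to $E$ via a bi-Lipschitz parametrization, with the help of Rademacher's theorem and the area formula.

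First I would use the structure theorem for $\HA^k$-rectifiable sets: up to a set of $\HA^k$-measure zero, $E$ can be written as a disjoint countable union $E = \bigcup_{i\ge 1} E_i$ where each $E_i$ is a $\HA^k$-measurable subset of $\varphi_i(A_i)$ for some Borel $A_i \subset \R^k$ and some bi-Lipschitz $\varphi_i\colon A_i \to \R^n$ (and one can further arrange $\varphi_i$ to be of class $C^1$ after discarding another null set). Since both sides of the claimed formula are countably additive with respect to such decompositions, it suffices to prove the identity for a single piece $E_i$, and by inner regularity one may replace $E_i$ by $\varphi_i(A_i)$ itself.

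Next I would pull everything back by $\varphi_i$. Set $g := f\circ\varphi_i\colon A_i\to\R$; the composition of two Lipschitz maps is Lipschitz, so Rademacher's theorem gives that $g$ is differentiable almost everywhere on $A_i$. By the area formula for Lipschitz maps,
\[
\int_{\varphi_i(A_i)} h(x)\, d\HA^k(x) = \int_{A_i} h(\varphi_i(y))\, J_k\varphi_i(y)\, dy
\]
for every nonnegative Borel $h$. A chain rule argument, valid $\HA^k$-a.e.\ by Rademacher and the differentiability of $\varphi_i$, identifies $|\nabla_\tau f(\varphi_i(y))|\, J_k\varphi_i(y) = |\nabla g(y)|$ for a.e.\ $y\in A_i$. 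Applying the classical Euclidean coarea formula
\[
\int_{A_i} |\nabla g(y)|\, dy = \int_\R \HA^{k-1}\bigl(A_i\cap g^{-1}(t)\bigr)\, dt
\]
and then using the area formula one more time on each level set $\{g=t\}$ (which is $\HA^{k-1}$-rectifiable for a.e.\ $t$) to move back to $\varphi_i(A_i)\cap f^{-1}(t)$ yields the claim on this piece. Summing over $i$ gives the global identity, and the measurability of $t\mapsto \HA^{k-1}(E\cap f^{-1}(t))$ follows from Fubini and the measurability of the slicing.

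The main obstacle is the chain-rule identification $|\nabla_\tau f(\varphi_i(y))|\, J_k\varphi_i(y) = |\nabla g(y)|$ at $\HA^k$-a.e.\ point, because $\nabla_\tau f$ exists only almost everywhere on the rectifiable set and $\varphi_i$ is merely Lipschitz. One handles this by working at points where both $\varphi_i$ and $g$ are differentiable and $\varphi_i$ realizes an approximate tangent $k$-plane at $\varphi_i(y)$; at such points the differential $d\varphi_i(y)$ is an isomorphism onto that tangent plane, and the singular-value decomposition of $d\varphi_i(y)$ yields both the Jacobian $J_k\varphi_i(y)$ and the change of the norm of the gradient. The underlying Euclidean coarea formula itself is the other nontrivial input; it can be proved by first establishing it for $C^1$ functions via Sard and Fubini, then approximating Lipschitz $g$ in $W^{1,1}$, and passing to the limit using the lower semicontinuity of Hausdorff measure on level sets together with the BV coarea identity for the distributional derivative.
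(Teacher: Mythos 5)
The paper does not actually prove this theorem---it is cited verbatim as Theorem 2.93 in Ambrosio--Fusco--Pallara---so there is no in-paper argument to compare against. Your overall strategy (structure theorem for $\HA^k$-rectifiable sets, reduce to a bi-Lipschitz chart, pull back by $\varphi_i$, apply the area formula and the Euclidean coarea formula on $\R^k$) is the standard textbook route, and the auxiliary remarks about Rademacher, measurability, and reduction to $C^1$ charts are all appropriate.

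The gap is in the displayed chain-rule identity. The claim $|\nabla_\tau f(\varphi_i(y))|\, J_k\varphi_i(y) = |\nabla g(y)|$ is false in general. A quick counterexample: take $k=n=2$, $\varphi(y_1,y_2)=(2y_1,y_2)$, $f(x_1,x_2)=x_2$; then $|\nabla_\tau f|\equiv 1$ and $J_2\varphi\equiv 2$, yet $g(y)=y_2$ gives $|\nabla g|\equiv 1$, so the left side is $2$ and the right side is $1$. The correct linear-algebra fact at a point where $\varphi_i$ and $g$ are differentiable and $\varphi_i$ realizes the approximate tangent plane is the Jacobian factorization
\[
|\nabla_\tau f(\varphi_i(y))|\, J_k\varphi_i(y) \;=\; |\nabla g(y)|\, J_{k-1}\bigl(\varphi_i\big|_{\{g=g(y)\}}\bigr)(y),
\]
with an extra $(k-1)$-dimensional Jacobian of $\varphi_i$ along the level set of $g$. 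Your subsequent step, ``using the area formula one more time on each level set,'' would introduce exactly this $J_{k-1}$ factor, so the two omissions happen to compensate in the final identity; but as written, both intermediate equalities are wrong, and the argument as presented does not actually close. The repair is to carry the $J_{k-1}$ factor through, apply the Euclidean coarea formula in its weighted form
\[
\int_{A_i} w(y)\,|\nabla g(y)|\,dy \;=\; \int_\R \int_{A_i\cap g^{-1}(t)} w(y)\, d\HA^{k-1}(y)\,dt
\]
with weight $w(y)=J_{k-1}\bigl(\varphi_i\big|_{g^{-1}(g(y))}\bigr)(y)$, and then invoke the area formula on each slice $\{g=t\}$ to convert the inner integral into $\HA^{k-1}\bigl(\varphi_i(A_i)\cap f^{-1}(t)\bigr)$.
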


In the next theorem we use the following notation.
For $t\in\R$, we denote by $\Sigma_t$ the hyperplane
$\Sigma_t:=\{x\in\R^n:x_1=t\}$.
For a set $E\subset\R^n$, we denote by $E_t$
the intersection (``slice'') $E_t:=E \cap \Sigma_t$.

\begin{cor}[Coarea inequality]\label{s-fla}
Let $E\subset\R^n$ be a set with finite perimeter.
Then $\partial^* E \cap \partial I_r$ is $\HA^{\size-2}$-rectifiable for almost every $r$, and
 \begin{equation}\label{slice}
\HA^{n-1}(\partial^* E)  \geq  \int_{0}^\infty \HA^{\size-2}( \partial^* E \cap \partial I_r)  \,dr .
 \end{equation}
\end{cor}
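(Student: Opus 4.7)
\medskip

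\noindent\textbf{Proof plan for Corollary \ref{s-fla}.}

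The plan is to apply the Federer Coarea Formula (Theorem \ref{coarea2}) to the $\HA^{n-1}$-rectifiable set $\partial^* E$ (which is rectifiable by the De Giorgi Theorem \ref{dg}, since $E$ has finite perimeter) with $k=n-1$ and with the $1$-Lipschitz function
$$
 f(x) = \|x\|_\infty = \max_{1\le i\le n} |x_i|,
$$
whose level sets are precisely the boundaries $f^{-1}(r) = \partial I_r$ for $r>0$.

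First I would verify that $f$ is genuinely Lipschitz with $|\nabla f(x)|=1$ at almost every $x\in\R^n$. Indeed, off the measure-zero set where the maximum $\max_i|x_i|$ is attained by more than one index, $f$ is smooth and equals $\pm x_i$ for some single index $i$, so $\nabla f(x) = \pm e_i$ and $|\nabla f(x)|=1$. Consequently, at $\HA^{n-1}$-a.e.\ point of $\partial^* E$ one has
$$
 |\nabla_\tau f(x)| \le |\nabla f(x)| = 1,
$$
where $\nabla_\tau f(x)$ denotes the tangential component of $\nabla f(x)$ along the approximate tangent plane to $\partial^* E$ at $x$.

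Next I would invoke Theorem \ref{coarea2}. Applied to $E=\partial^*E$ and the function $f$ above, it gives that $\partial^* E \cap f^{-1}(r)=\partial^* E \cap \partial I_r$ is $\HA^{n-2}$-rectifiable for almost every $r>0$, and that
$$
 \int_{\partial^* E} |\nabla_\tau f(x)|\, d\HA^{n-1}(x)
  = \int_0^\infty \HA^{n-2}(\partial^* E \cap \partial I_r)\, dr.
$$
Combining this identity with the pointwise bound $|\nabla_\tau f|\le 1$ on $\partial^* E$ yields
$$
 \HA^{n-1}(\partial^* E)
 \ge \int_{\partial^* E} |\nabla_\tau f|\, d\HA^{n-1}
 = \int_0^\infty \HA^{n-2}(\partial^* E \cap \partial I_r)\, dr,
$$
which is exactly \eqref{slice}.

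There is really no main obstacle here: everything is an immediate application of Federer's coarea formula once the right test function $f=\|\cdot\|_\infty$ is chosen. The only mildly delicate point is the regularity of $f$, but since $f$ is Lipschitz and Federer's theorem only requires Lipschitz regularity (differentiability is guaranteed a.e.\ by Rademacher), this causes no trouble. Note also that the inequality, as opposed to equality, arises purely from the bound $|\nabla_\tau f|\le|\nabla f|=1$; one would have equality if the hypersurface $\partial^* E$ were everywhere transverse to the foliation $\{\partial I_r\}$, but transversality is not needed for our one-sided estimate.
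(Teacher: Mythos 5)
Your proof is correct and follows essentially the same route as the paper: applying the De Giorgi Theorem to get $\HA^{n-1}$-rectifiability of $\partial^* E$, then invoking the Federer coarea formula with $k=n-1$ and $f(x)=\|x\|_\infty$, and using the bound $|\nabla_\tau f|\le 1$. The only difference is that you spell out the verification that $f$ is $1$-Lipschitz with $|\nabla f|=1$ a.e., which the paper leaves implicit.
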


\begin{proof} By the De Giorgi Theorem~\ref{dg} 
the reduced boundary $\partial^* E$ is $\HA^{\size-1}$-rectifiable.
We obtain the inequality in~\eqref{slice} by applying Theorem~\ref{coarea2}  
to $\partial^*E$ in place of $E$ with $k=\size-1$, $f(x)= \|x\|_{l_\infty(\R^n)}$,
and using the fact that $|\nabla_\tau f(x)| \leq 1$.
\end{proof}

\begin{thm}[Boundary slicing theorem, Theorem 18.11 in~\cite{M}]\label{Maggi}
If $E$ is a set of finite perimeter in $\mathbb{R}^\size$, 
then for almost every $t \in \R$ the slice
$
E_t = E \cap \Sigma_t
$
is a set of finite perimeter in the hyperplane $\Sigma_t\cong \R^{n-1}$ and
\[
\HA^{\size-2}( \partial^* (E_t) \Delta ( \partial^* E)_t) =0,
\] 
where $\Delta$ denotes symmetric difference of two sets
and $\partial^* (E_t)$ is the $(n-2)$-dimensional
reduced boundary of $E_t$ in $\Sigma_t$.
\end{thm}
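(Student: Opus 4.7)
The plan is to derive the boundary slicing theorem from the De Giorgi theorem applied in both $\R^n$ and $\Sigma_t\cong\R^{n-1}$, combined with a one-dimensional BV slicing argument along the $x_1$-direction and Federer's coarea formula (Theorem~\ref{coarea2}).

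First I would reduce the finite-perimeter assertion to one-dimensional slicing. Write $x=(x_1,x')$ with $x'\in\R^{n-1}$ and let $\pi\co\R^n\to\R$ be $\pi(x)=x_1$. Classical BV theory shows that for any $u\in BV(\R^n)$ and almost every $t\in\R$ the slice $u(t,\cdot)$ belongs to $BV(\R^{n-1})$, and
\[
\int_\R \Var\bigl(u(t,\cdot),\R^{n-1}\bigr)\,dt \;\le\; |Du|(\R^n).
\]
Applying this to $u=\chi_E$ gives immediately that $\chi_{E_t}\in BV(\Sigma_t)$ for a.e.\ $t$, so $E_t$ has finite perimeter in $\Sigma_t$. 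This handles the first half of the statement.

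Next I would attack the identification of the two reduced boundaries. By the De Giorgi theorem~\ref{dg} the reduced boundary $\partial^*E$ is $\HA^{n-1}$-rectifiable and carries a measurable unit inner normal $\nu_E\co\partial^*E\to S^{n-1}$, and $|D\chi_E|=\HA^{n-1}\lfloor\partial^*E$. Applying Federer's coarea formula (Theorem~\ref{coarea2}) to $\partial^*E$ with $f=\pi$ yields that $(\partial^*E)_t=\partial^*E\cap\Sigma_t$ is $\HA^{n-2}$-rectifiable for a.e.\ $t$, and
\[
\int_\R \HA^{n-2}\bigl((\partial^*E)_t\bigr)\,dt
=\int_{\partial^*E}\sqrt{1-\langle\nu_E,e_1\rangle^2}\,d\HA^{n-1},
\]
which is finite. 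On the $\Sigma_t$-side, De Giorgi applied inside $\Sigma_t$ gives that $\partial^*(E_t)$ is $\HA^{n-2}$-rectifiable with $|D\chi_{E_t}|=\HA^{n-2}\lfloor\partial^*(E_t)$.

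To match these two sets up to an $\HA^{n-2}$-null discrepancy, I would compare measures. On one hand, the BV slicing identity mentioned above is an equality for a.e.\ $t$ when read measure-theoretically: the derivative measure of the sliced function is the restriction of the tangential part of $Du$ to $\Sigma_t$. Concretely, for $u=\chi_E$ this means that for a.e.\ $t$,
\[
|D\chi_{E_t}|\;=\;\HA^{n-1}\lfloor(\partial^*E)_t\ \text{pulled into }\Sigma_t,
\]
after projecting the normal $\nu_E$ onto $\Sigma_t$ and discarding the null set where $\nu_E=\pm e_1$. Combining with $|D\chi_{E_t}|=\HA^{n-2}\lfloor\partial^*(E_t)$ forces $(\partial^*E)_t$ and $\partial^*(E_t)$ to coincide as supports of the same $\HA^{n-2}$-absolutely continuous Radon measure, hence to agree up to an $\HA^{n-2}$-null set.

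The main obstacle is the last step: the slice $(\partial^*E)_t$ a priori could contain extra points where $\Sigma_t$ is tangent to $\partial^*E$ (i.e.\ $\nu_E\parallel e_1$), and $\partial^*(E_t)$ could contain extra "phantom" points coming from lower-dimensional behavior of the trace that does not register in $\partial^*E$. Both defects must be controlled. The first is handled by the coarea identity displayed above: the tangential set contributes zero to $\int\HA^{n-2}((\partial^*E)_t)\,dt$, so by Fubini it has $\HA^{n-2}$-measure zero in $\Sigma_t$ for a.e.\ $t$. The second is the more delicate point and requires the full force of BV slicing: for a.e.\ $t$ the distributional derivative of the sliced function has no singular part outside the slice of the reduced boundary, which is precisely the content one needs from Federer's decomposition together with Vol'pert's chain rule for $BV$ traces. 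Once these exceptional sets are dismissed, the symmetric-difference estimate $\HA^{n-2}(\partial^*(E_t)\,\Delta\,(\partial^*E)_t)=0$ follows.
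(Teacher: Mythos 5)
The paper does not prove this statement at all: it is stated as an external result, cited verbatim as Theorem~18.11 in Maggi's book \cite{M}, and is used as a black box (in the proof of Lemma~\ref{flux-main}). So there is no ``paper's own proof'' to compare against; I can only assess your sketch on its own merits.

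Your overall strategy is the right one (1D BV slicing for finite perimeter of $E_t$, coarea for the slice $(\partial^*E)_t$, De Giorgi in $\Sigma_t$ for $\partial^*(E_t)$), but the decisive step is not actually carried out. You correctly identify the two possible defects -- tangential points of $\partial^*E$ where $\nu_E\parallel e_1$, and ``phantom'' points of $\partial^*(E_t)$ -- but for the second you merely assert that BV slicing plus ``Vol'pert's chain rule'' supplies ``precisely the content one needs,'' which is circular: that content is the thing to be proved. The clean way to close the gap avoids Vol'pert entirely. Write $D'\chi_E=\nu_E'\,\HA^{n-1}\lfloor\partial^*E$ for the tangential part of $D\chi_E$. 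The BV slicing theorem identifies, for a.e.\ $t$, the $t$-slice of the vector measure $D'\chi_E$ with $D\chi_{E_t}$; the coarea formula on the rectifiable set $\partial^*E$ computes that same slice as $\tfrac{\nu_E'}{|\nu_E'|}\,\HA^{n-2}\lfloor(\partial^*E)_t$ (the Jacobian $|\nu_E'|$ cancels the density $|\nu_E'|$ of $D'\chi_E$ against $\HA^{n-1}$); De Giorgi in $\Sigma_t$ gives $D\chi_{E_t}=\nu_{E_t}\,\HA^{n-2}\lfloor\partial^*(E_t)$. Equating these two vector measures and taking total variations -- crucially using that both $\tfrac{\nu_E'}{|\nu_E'|}$ and $\nu_{E_t}$ are unit vectors -- yields $\HA^{n-2}\lfloor(\partial^*E)_t=\HA^{n-2}\lfloor\partial^*(E_t)$, and hence the symmetric difference is $\HA^{n-2}$-null. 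Your sketch implicitly gestures at this, but it neither performs the coarea cancellation nor explains why equality of total-variation measures follows; as written the ``forces $(\partial^*E)_t$ and $\partial^*(E_t)$ to coincide'' conclusion is asserted rather than derived.
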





\begin{thebibliography}{abc99xyz}


\bibitem{A} L. Ambrosio, N. Fusco, D. Pallara, 
Functions of bounded variation and free discontinuity problems, Oxford University Press, 2000. 


\bibitem{Bardi} M. Bardi, I. Capuzzo-Dolcetta,
{\em Optimal control and viscosity solutions of Hamilton-Jacobi-Bellman equations.}  Birkhauser 1997. 570 pp.

\bibitem{Barles} G. Barles, 
Solutions de viscosit\'e des \'equations de Hamilton-Jacobi, Springer-Verlag, Paris, 1994.

\bibitem{Bressan} A. Bressan, 
{\em Viscosity Solutions of Hamilton-Jacobi Equations and Optimal Control Problems}, lecture notes,
https://www.math.psu.edu/bressan/PSPDF/HJ-lnotes.pdf

\bibitem{BIN} D. Burago, S. Ivanov, A. Novikov,
{\em A survival guide for a feeble fish}, 
Algebra i Analiz (St. Petersburg Math. J.), {\bf 29} (2017), no. 1, 49-59;

 
\bibitem{CS} P. Cardaliaguet, P.E. Souganidis, 
{\em Homogenization and enhancement of the G-equation in random environments}, 
Comm. Pure Appl. Math. 66 (10) (2013) 1582--1628.



\bibitem{Frank} H. Frankowska,
{\em Lower semicontinuous solutions of Hamilton-Jacobi-Bellman equations.}
SIAM J. Control Optim. 31 (1993), no. 1, 257--272.


\bibitem{H} E. Hopf, {\em Generalized solutions of non-linear equations of first order},
J. Math. Mech. {\bf 14} (1965), 951-973.

\bibitem{JST}
W. Jing, P.E. Souganidis, H.V. Tran,
{\em Large time average of reachable sets and Applications to Homogenization 
of interfaces moving with oscillatory spatio-temporal velocity},
{\tt arXiv:1408.2013}.


\bibitem{L} P.D. Lax, 
{\em Hyperbolic systems of conservation laws II}, Comm. Pure Appl. Math. {\bf 10} (1957), 537--566.

\bibitem{KS} S.Kryzhevich, E.Stepanov
The saga of a fish: from a survival guide to closing lemmas, arXiv:1801.06200


\bibitem{M} F. Maggi, 
Sets of Finite Perimeter and Geometric Variational Problems: An Introduction to Geometric Measure Theory,
Cambridge University Press, 2012.

\bibitem{NN} J. Nolen, A. Novikov, 
{\em Homogenization of the G-equation with incompressible random drift},
Commun. Math. Sci. {\bf 9} (2011), no. 2, 561--582.

\bibitem{Pet} N. Peters,  Turbulent Combustion, Cambridge University Press, Cambridge, 2000.


\bibitem{Wlms85} F.A. Williams,
{\em Turbulent Combustion}, in The Mathematics of Combustion, J.D. Buckmaster, 
Ed. Society for Industrial and Applied Mathematics, 1985, pp. 97--131.

\end{thebibliography}
\end{document}